\documentclass[11pt,a4paper,reqno]{amsart}
\usepackage{graphicx} 
\usepackage{verbatim,amssymb,hyperref,mathrsfs,amsfonts,bbm,stmaryrd}
\usepackage{dsfont}
\usepackage{ytableau}
\usepackage[all]{xy}
\usepackage{wasysym}
\usepackage[utf8]{inputenc}
\usepackage{BOONDOX-calo}
\usepackage{tikz-cd,amscd,pb-diagram}
\usetikzlibrary{arrows,decorations.markings}
\usepackage{imakeidx}
\tikzstyle{dot}=[draw, fill =black, circle, inner sep=0pt, minimum size=2pt]

\hypersetup{
  hidelinks,
  pdftitle={Scopes equivalence for blocks of Ariki-Koike algebras},
  pdfauthor={Joseph Chuang and Kai Meng Tan},
  pdfsubject={Representation theory},
  pdfkeywords={Ariki-Koike algebras, Scopes equivalence, moving vectors, RoCK blocks}
}

\numberwithin{equation}{section}

\theoremstyle{plain}
  \newtheorem{thm}{Theorem}[section]
  \newtheorem{cor}[thm]{Corollary}
  \newtheorem{lem}[thm]{Lemma}
  \newtheorem{prop}[thm]{Proposition}
  
\theoremstyle{definition}
  \newtheorem{Def}[thm]{Definition}
\theoremstyle{remark}
  \newtheorem{rem}[thm]{Remark}
  \newtheorem{eg}[thm]{Example}
  
  \newtheorem*{rem*}{Remark}
\numberwithin{equation}{section}

\newcommand{\bone}{\boldsymbol{1}}
\newcommand{\bbone}{\mathbbm{1}}
\newcommand{\bl}{\boldsymbol{\lambda}}

\newcommand{\bm}{\boldsymbol{\mu}}
\newcommand{\bnu}{\boldsymbol{\nu}}

\newcommand{\UU}{\mathbf{U}}
\newcommand{\uu}{\upsilon}
\newcommand{\fs}{\mathcal{s}}

\newcommand{\ba}{\mathbf{a}}
\newcommand{\br}{\mathbf{r}}
\newcommand{\bs}{\mathbf{s}}
\newcommand{\bt}{\mathbf{t}}
\newcommand{\bu}{\mathbf{u}}
\newcommand{\bv}{\mathbf{v}}

\newcommand{\bx}{\mathbf{x}}
\newcommand{\by}{\mathbf{y}}
\newcommand{\bz}{\mathbf{z}}
\newcommand{\wt}{\mathsf{wt}}
\newcommand{\core}{\mathsf{core}}
\newcommand{\quot}{\mathsf{quot}}
\newcommand{\mv}{\mathbf{mv}}

\renewcommand{\min}{\mathrm{min}}
\renewcommand{\max}{\mathrm{max}}
\newcommand{\ZZ}{\mathbb{Z}}

\newcommand{\cs}{\mathsf{s}}
\newcommand{\HH}{\mathcal{H}}

\newcommand{\BB}{\mathcal{B}}
\newcommand{\B}{\mathsf{B}}
\newcommand{\C}{\mathsf{C}}

\newcommand{\rr}{\rho}
\newcommand{\FF}{\mathbb{F}}

\newcommand{\Sp}{S}
\newcommand{\Hub}{\mathsf{Hub}}
\newcommand{\hub}{\mathsf{hub}}
\newcommand{\PP}{\mathcal{P}}
\newcommand{\res}{\mathsf{res}}

\newcommand{\len}{l}

\newcommand{\ff}{\varphi}

\newcommand{\Be}{\mathbf{e}}
\newcommand{\EW}{\widehat{\mathbf{W}}}
\newcommand{\AW}{\mathbf{W}}
\newcommand{\sym}[1]{\mathfrak{S}_{#1}}
\newcommand{\DDot}[1]{\mathrel{\overset{#1}{\bullet}}}
\newcommand{\CDot}[1]{\mathrel{\overset{#1}{\circ}}}

\newcommand{\AAbar}{\overline{\mathcal{A}}_e^{\ell}}
\newcommand{\bij}{\iota}

\newcommand{\EP}{\pmb{\varnothing}}
\newcommand{\Ht}{\mathsf{ht}}

\newcommand{\II}{\mathcal{I}}

\newcommand{\dd}{\mathsf{d}}

\newcommand{\ft}[2]{\llparenthesis\, #1,#2 \,\rrparenthesis}

\def\Z{{\mathbb Z}}

\makeindex[columns=2]

\title{Scopes equivalence for blocks of Ariki-Koike algebras}

\author{Joseph Chuang}
\address[J. Chuang]{Department of Mathematics, City St George's, University of London, Northampton Square, London EC1V 0HB, United Kingdom.}
\email{joseph.chuang.1@citystgeorges.ac.uk}

\author{Kai Meng Tan}
\address[K. M. Tan]{Department of Mathematics, National University of Singapore, Block S17, 10 Lower Kent Ridge Road. Singapore 119076.}
\email{tankm@nus.edu.sg}

\date{August 2026}

\thanks{This work was supported by Singapore Ministry of Education AcRF Tier 1 A-8003568-00-00.}

\subjclass[2020]{20C08, 05E10}

\begin{document}

\maketitle

\begin{abstract}
We obtain necessary and sufficient conditions for a block of an Ariki-Koike algebra to have the property that all its associated multipartitions have no addable node with a given residue.
This leads to a classification of Scopes equivalence classes for Ariki-Koike algebras in terms of their pyramid numbers and Scopes vectors, generalising that for level $1$ and for core blocks.
Our proof is independent of the results of Richards \cite{R} for level 1.
\end{abstract}

\section{Introduction}

In her proof of Donovan's conjecture for symmetric groups---that there are only a finite number of Morita inequivalent blocks with a given defect group---Scopes proved \cite{Scopes} a pivotal result: when two blocks form a so-called `$[w:k]$-pair' with $k \geq w$, they are Morita equivalent, and the correspondence between their modules can be described in a straightforward  combinatorial way.
The equivalence relation generated by this relation is now commonly known as {\em Scopes equivalence}.

Analogous Scopes equivalence and $[w:k]$-pairs have since been developed for Iwahori-Hecke algebras of type A and $q$-Schur algebras.
In particular, Richards established in \cite{R} a necessary and sufficient condition for two blocks of Iwahori-Hecke algebras of type A---of which blocks of symmetric group algebras are special cases---to be Scopes equivalent, that they must have the same {\em pyramid numbers}.

To explain Scopes equivalence in a more modern language, there is an $\widehat{\mathfrak{sl}}_e$-categorification of the module categories of the Iwahori-Hecke algebras of type A, in which the generators $e_i, f_i$ of $\widehat{\mathfrak{sl}}_e$ act as restriction and induction functors.
This induces an action of the Weyl group $\AW_e$ of $\widehat{\mathfrak{sl}}_e$ on the blocks of these algebras, with each orbit consisting precisely of the blocks with the same weight.
A $[w:k]$-pair consists of a block $B$ and its image $\cs_i \DDot{} B$ under a simple reflection $\cs_i$ of $\AW_e$.
The blocks in such a pair are Morita equivalent if they are at the end of an $i$-string, with an explicit description of the correspondence between their important modules, such as Specht modules and simple modules.
In effect, Scopes proved that for the case of symmetric groups, the combinatorial condition of $k \geq w$ is both necessary and sufficient for such a $[w:k]$-pair to be at the end of an $i$-string.

For Ariki-Koike algebras, a generalisation of the Iwahori-Hecke algebras of type A, there is also an $\widehat{\mathfrak{sl}}_e$-categorification of their module categories, inducing a $\AW_e$-action on their blocks.
A block $B$ of an Ariki-Koike algebra is also Morita equivalent to $\cs_i \DDot{} B$ if they are at the end of an $i$-string, with a similarly explicit description of the correspondence between important modules.
However, an equally straightforward combinatorial criterion to determine exactly when this happens has been elusive.
While Dell'Arciprete \cite{D}, Amara-Omari and Schaps \cite{AM-S}, Li and the second author \cite{LT}, and Declercq and Jacon \cite{DJ} have each provided sufficient conditions, these conditions are not necessary; see \cite[Example 4.14 and Remark 4.15]{LT}. 

The first key development came in \cite{LQT}, where Li, Qi and the second author used the {\em moving vectors} to study the core blocks of Ariki-Koike algebras in detail.
This groundbreaking concept of moving vectors was first introduced by Li and Qi in \cite{LQ}
for the purpose of studying the representation type of blocks of Ariki-Koike algebras.
They were originally only defined for multipartitions associated with FLOTW multicharges in \cite{LQ}, and then generalised to all multicharges with their properties further developed in \cite{LQT}.
In particular, it is shown that they classify the $\AW_e$-orbits for Ariki-Koike algebras: two blocks of Ariki-Koike algebras lie in the same orbit if and only if they have the same moving vector \cite[Proposition 3.13]{LQT}.
Among other things, a combinatorial necessary and sufficient condition for a core block to be at the end of an $i$-string was established \cite[Lemma 4.8 and Proposition 4.23]{LQT}.  In addition, it is shown that two core blocks in the same $\AW_e$-orbit are Scopes equivalent if and only if they have the same {\em Scopes vector} \cite[Theorem 4.24]{LQT}.

The present paper extends the ideas in \cite{LQT} to all blocks of Ariki-Koike algebras.
In Theorem \ref{T:equiv1}, we give combinatorial necessary and sufficient conditions for a block to be at the end of an $i$-string, generalising the characterisation found in \cite{LQT} for the core blocks.
In Theorem \ref{T:Scopes}, we show that two non-core blocks of Ariki-Koike algebras in the same $\AW_e$-orbit are Scopes equivalent if and only if they have the same {\em Scopes vector} and the same {\em pyramid numbers}.
Roughly speaking, the Scopes vector of a block records the ordered residue data of the multicharge of its rank-level dual, while the pyramid numbers encode pairwise differences in that multicharge.
Combining Theorem \ref{T:Scopes} with \cite[Theorem 4.24]{LQT}, we derive Corollary \ref{C: }, which provides the invariants---namely moving vectors, Scopes vectors and pyramid numbers---for classifying the Scopes equivalence classes for Ariki-Koike algebras.  
This classification naturally generalises both Richards's for Iwahori-Hecke algebras and that for core blocks of Ariki-Koike algebras. 
As an application, we show that the RoCK blocks of Ariki-Koike algebras are precisely the blocks whose pyramid numbers all achieve the maximum value, and determine the number of Scopes inequivalent RoCK blocks in a given $\AW_e$-orbit.

We now indicate the layout of this paper.
In the next section, we provide the necessary combinatorial background of (multi-)partitions and (multi-)$\beta$-sets.
In Section \ref{S:main}, we first review the relevant aspects of the representation theory of Ariki-Koike algebras in the first subsection.  
We then establish the two main theorems of this paper, namely Theorems \ref{T:equiv1} and \ref{T:Scopes}, in the second subsection. 
The proof of Theorem \ref{T:Scopes} depends on Proposition \ref{P:unique}, whose technical combinatorial proof is deferred to the next subsection.
In the final subsection, we apply our results to RoCK blocks to obtain a simple description of these blocks in terms of their pyramid numbers, and a closed formula for the number of Scopes inequivalent RoCK blocks in a given $\AW_e$-orbit.

Throughout this paper, we use the following notations and conventions.
Firstly, $\mathbb{Z}^+$ denotes the set of positive integers.  
For $k \in \ZZ$, we write $\ZZ_{< k} := \{ x \in \ZZ \mid x < k \}$, and define similarly $\ZZ_{\leq k}$, $\ZZ_{> k}$ and $\ZZ_{\geq k}$.

We fix $e,\ell \in \mathbb{Z}^+$ with $e \geq 2$. For $a,b \in \ZZ$ and $m \in \{e,\ell\}$,
$$[a,\, b] := \{ x \in \ZZ \mid a \leq x \leq b \},$$
and we write $a \equiv_m b$ for $a \equiv b \pmod m$.
We also identify $\ZZ/m\ZZ$ with $[0,\, m-1]$, and for $\bt = (t_1,\dotsc, t_m) \in \ZZ^m$, write
$$|\bt| := \sum_{i=1}^m t_i.$$

Finally, $\bbone_{\mathtt{p}}$ denotes the indicator function on the statement $\mathtt{p}$ and $\delta_{ij}$ denotes the usual Kronecker delta for $i,j \in \ZZ$, i.e.\ $$\bbone_{\mathtt{p}} = \begin{cases}
1, &\text{if $\mathtt{p}$ is true};\\
0, &\text{otherwise},
\end{cases}
\qquad \qquad
\delta_{ij} = 
\begin{cases}
1, &\text{if } i = j;\\
0, &\text{otherwise}.
\end{cases}
$$

\section{Combinatorial background}

\subsection{$\beta$-sets and abaci} \label{SS:beta sets}

A \index{$\beta$-set} $\beta$-set $\B$ is a subset of $\Z$ such that both $\max(\B)$ and $\min(\Z \setminus \B)$ exist. Its \index{charge} \index{charge!$\fs(\B)$} charge $\fs(\B)$ is defined as
$$\fs(\B) := |\B \cap \Z_{\geq 0}| - |\Z_{<0} \setminus \B|.$$
We denote the set of all $\beta$-sets by $\BB$ and the set of all $\beta$-sets with charge $s$ by $\BB(s)$.  Clearly, $\BB= \bigcup_{s \in \Z} \BB(s)$ (disjoint union).

Let $\B \in \BB$. The \index{abacus!$\infty$-abacus} $\infty$-abacus display of $\B$ is a horizontal line with positions labelled by $\Z$ in an ascending order going from left to right, with a bead at the position $a$ for each $a \in \B$.
The $e$-abacus display of $\B$ is obtained by cutting its $\infty$-abacus display into sections $\{ ae, ae+1, \dotsc, ae + e-1 \}$ ($a \in \Z$), and putting the section $\{ ae, ae+1, \dotsc, ae + e-1 \}$ directly on top of $\{ (a+1)e, (a+1)e+1, \dotsc, (a+1)e + e-1 \}$.
Thus the \index{abacus!$e$-abacus} $e$-abacus has $e$ infinitely long vertical runners, which we label as $0, 1,\dotsc, e-1$ going from left to right, and countably infinitely many horizontal rows, labelled by $\Z$ in an ascending order going from top to bottom, and the position on row $a$ and runner $i$ equals $ae + i$.
The {\em $e$-quotient of $\B$}, denoted $\quot_e(\B) = (\B_1,\dotsc, \B_{e})$, can be obtained as follows: for each $i \in \ZZ/e\ZZ$, treat each runner $i$ as an $\infty$-abacus, with the position $ae+i$ relabelled as $a$, and obtain the subset $\B_{i+1}$ from the beads in runner $i$ of the $e$-abacus of $\B$.  Formally,
$$
\B_{i+1} = \{ a \in \Z \mid ae+i \in \B \} = \{ \tfrac{x-i}{e} \mid x \in \B,\ x \equiv_e i \} \in \BB.
$$
Its {\em $e$-core}, denoted $\core_e(\B)$, can be obtained by repeatedly sliding its beads in its $e$-abacus up their respective runners to fill up the vacant positions above them, and its {\em $e$-weight}, denoted $\wt_e(\B)$, is the total number of times the beads move one position up their runners.

Note that
\begin{align*}
\sum_{i=1}^{e} \fs(\B_i) = \fs(\B) &= \fs(\core_e(\B)).
\end{align*}

Given $\beta$-set $\B$ and $k \in \Z$, define
$$
\B^{+k} :=  \{ x + k \mid x \in \B \}.
$$
Then $\B^{+k}$ is a $\beta$-set with $\fs(\B^{+k}) = \fs(\B) + k$.  
Furthermore, if $\quot_e(\B) = (\B_1,\B_2,\dotsc, \B_{e})$, then $\quot_e(\B^{+1}) = ((\B_{e})^{+1}, \B_1,\dotsc, \B_{e-1})$.
Consequently,
\begin{equation} \label{E:quot}
\quot_e(\B^{+(ae+b)}) = ((\B_{e-b+1})^{+(a+1)},\dotsc, (\B_{e})^{+(a+1)},(\B_1)^{+a}, \dotsc, (\B_{e-b})^{+a})
\end{equation}
for all $a \in \ZZ$ and $b \in \ZZ/e\ZZ$.

For each $i \in \Z/e\Z$ and $\B \in \BB$, define $\hub_i(\B) = \hub_{i,e}(\B)$ by
\begin{align*}
\hub_i(\B) &:= |\{ x \in \B \mid x\equiv_e i,\, x-1 \notin \B \}| - |\{ x \in \B \mid x \equiv_e i-1,\, x+1 \notin \B \}|
\end{align*}
Furthermore, call the $e$-tuple $(\hub_0(\B), \dotsc, \hub_{e-1}(\B))$ the {\em $e$-hub of $\B$}, denoted $\Hub_e(\B)$.
By \cite[Lemma 2.1]{LQT}, if $\quot_e(\B) = (\B_1,\dotsc, \B_{e})$, then
$
\hub_j(\B) =
\fs(\B_{j+1}) - \fs(\B_{j}) - \delta_{j0} 
$ 
for all $j \in \ZZ/e\ZZ$,
where
$\B_0 := \B_e$.
In particular,
\begin{equation}
\Hub_e(\B) = \Hub_e(\core_e(\B)). \label{E:hub-core}
\end{equation}


\subsection{Partitions}
A partition is a weakly decreasing infinite sequence of nonnegative integers that is eventually zero.  Given a partition $\lambda = (\lambda_1,\lambda_2,\dotsc)$, we write \begin{align*}
\len(\lambda) &:= \max (\{ 0\} \cup \{ a \in \mathbb{Z}^+ : \lambda_a >0 \}) ; \\
|\lambda| &:= \sum_{a=1}^{\len(\lambda)} \lambda_a.
\end{align*}
When $|\lambda| = n$, we say that $\lambda$ is a partition of $n$.
Its Young diagram $[\lambda]$ is defined as
$$ [\lambda] = \{ (i,j) \in \mathbb{Z} \mid 1 \leq i \leq \len(\lambda), 1 \leq j \leq \lambda_i \}.
$$
We write $\PP(n)$ for the set of partitions of $n$ and $\PP$ for the set of all partitions.  The unique partition of $0$, also known as the empty partition, shall be denoted as $\varnothing$.

Let $\lambda = (\lambda_1,\lambda_2,\dotsc)\in \PP$. For each $t \in \mathbb{Z}$,
$$
\beta_t(\lambda) := \{ \lambda_i+t-i \mid i \in \mathbb{Z}^+\}
$$
is a $\beta$-set, with $\fs(\beta_t(\lambda)) = t$.  We call $\beta_t(\lambda)$ the $\beta$-set of $\lambda$ with charge $t$.
The map $\beta : \PP \times \ZZ \to \BB$, defined by $\beta(\lambda,t) = \beta_t(\lambda)$, is a bijection, and for each $t \in \mathbb{Z}$, $\beta_t = \beta(-,t)$ gives a bijection between $\PP$ and $\BB(t)$.  In particular, when $\B$ is a $\beta$-set, we shall write $\beta^{-1}(\B)$ for the unique partition $\lambda$ satisfying $\beta (\lambda, \fs(\B)) = \beta_{\fs(\B)}(\lambda) = \B$.

The $e$-weight $\wt_e(\lambda)$ and the $e$-core $\core_e(\lambda)$ of  $\lambda$ can be unambiguously defined to be $\wt_e(\beta_t(\lambda))$ and $\beta^{-1}(\core_e(\beta_t(\lambda)))$ respectively for any $t \in \mathbb{Z}$.

We note that if $\B$ is a $\beta$-set with $\quot_e(\B) = (\B_{1},\dotsc, \B_{e})$, then $\core_e(\B)$ is the unique $\beta$-set whose $e$-quotient is $(\beta_{\fs(\B_1)}(\varnothing), \dotsc, \beta_{\fs(\B_{e})} (\varnothing))$ and $\wt_e(\B) = \sum_{i=1}^{e} |\beta^{-1}(\B_{i})|$.

\subsection{Multipartitions}

A multipartition is a finite tuple of partitions, and when such a tuple has length $\ell$, we call it an $\ell$-partition.
An {\em $\ell$-partition of $n$} is an element $\bl = (\lambda^{(1)}, \dotsc, \lambda^{(\ell)}) \in \PP^{\ell}$ such that
$$
|\bl| := \sum_{i=1}^{\ell} |\lambda^{(i)}| = n.$$
We write $\PP^{\ell}(n)$ for the set of all $\ell$-partitions of $n$.
The unique $\ell$-partition of $0$ shall be denoted $\EP$.  Of course, $\EP$ depends on $\ell$, which will always be clear in the context in this paper.

Let $\bl \in \PP^{\ell}$.  The \index{Young diagram} {\em Young diagram of $\bl$} is
$$[\bl] = \{ (a,b,i) \in (\Z^+)^3 \mid a \leq \len(\lambda^{(i)}),\ b \leq \lambda^{(i)}_a,\ i \leq \ell \}.$$
Elements of $[\bl]$ are called {\em nodes}.

We usually associate $\bl$ with an {\em $\ell$-charge} $\bt \in \Z^{\ell}$, and we write \index{$(\bl; \bt)$} $(\bl; \bt)$ for such a pair.  The set of such pairs is then naturally identified with $\PP^{\ell} \times \Z^{\ell}$.

Let $\bt = (t_1,\dotsc, t_{\ell}) \in \Z^{\ell}$.
Given $(a,b,i) \in [\bl]$, its {\em $(e,\bt)$-residue}, denoted $\res_e^{\bt}(a,b,i)$,
is the residue class of $b-a + t_i$ 
modulo $e$. 
We write $\res^{\bt}_e(\bl)$ for the multiset $\{\res_e^{\bt}(a,b,i) \mid (a,b,i) \in [\bl]\}$.

If removing a node $\mathfrak{n}$ of $[\bl]$ from $[\bl]$ yields a Young diagram $[\bm]$ for some $\ell$-partition $\bm$, we call $\mathfrak{n}$ {\em a removable node of $\bl$} and {\em an addable node of $\bm$}.
If $(a,b,i)$ is a removable node of $\bl$, then $b = \lambda^{(i)}_a > \lambda^{(i)}_{a+1}$, and so
such a removable node corresponds bijectively to an $x \in \beta_{t_i}(\lambda^{(i)})$  with $x-1 \notin \beta_{t_i}(\lambda^{(i)})$ (namely, $x = b-a+t_i$).
Note that if $\res_e^{\bt} (a,b,i) = j$, then $x 
\equiv_e \res^{\bt}_e(a,b,i) = j$, so that $x$ lies in runner $j$ of the $e$-abacus display of $\beta_{t_i}(\lambda^{(i)})$.
In the same way, an addable node $(a',b',i')$ of $[\bl]$ corresponds bijectively to an $x$ such that $x-1 \in \beta_{t_{i'}}(\lambda^{(i')})$ but $x \notin \beta_{t_{i'}}(\lambda^{(i')})$ (namely, $x = b'-a' +t_{i'}$).

For $\bl = (\lambda^{(1)},\dotsc, \lambda^{(\ell)}) \in \PP^{\ell}$ and $\bt = (t_1,\dotsc, t_{\ell}) \in \Z^{\ell}$, we define
$$
\hub_j(\bl;\bt) := \sum_{i=1}^{\ell} \hub_j(\beta_{t_i}(\lambda^{(i)}))
$$
for each $j \in \Z/e\Z$.
This is the number of removable nodes of $\bl$ with $(e,\bt)$-residue $j$ minus the number of addable nodes of $\bl$ with $(e,\bt)$-residue $j$.
The {\em $e$-hub of $(\bl;\bt)$}, denoted $\Hub_e(\bl;\bt)$, is then
$$
\Hub_e(\bl;\bt) := (\hub_0(\bl;\bt), \dotsc, \hub_{e-1}(\bl;\bt)).
$$

For $\bl = (\lambda^{(1)},\dotsc, \lambda^{(\ell)}) \in \PP^{\ell}$ and $\bt = (t_1,\dotsc, t_{\ell}) \in \Z^{\ell}$, we shall use the shorthand
$$\beta_{\bt}(\bl) := (\beta_{t_1}(\lambda^{(1)}), \dotsc, \beta_{t_{\ell}}(\lambda^{(\ell)})) \in \BB^{\ell}.$$

\subsection{Uglov's map} \label{SS:Uglov}

For each $i \in [1,\,\ell]$, define $\uu_i\ (= \uu_{e,\ell, i}) : \mathbb{Z} \to \mathbb{Z}$ by
$$
\uu_i(x) = (x - \bar{x})\ell + (\ell-i)e + \bar{x}.
$$
Here $\bar{x}$ is defined by $x \equiv_e \bar{x} \in \{ 0,1,\dotsc, e-1\}$.   In other words, if $x = ae + b$, where $a,b \in \mathbb{Z}$ with $0 \leq b < e$, then $$\uu_i(x) = ae\ell + (\ell-i)e + b.$$

In \cite{Uglov}, Uglov defines a map $\UU\ (= \UU_{e,\ell}) : \BB^{\ell} \to \BB$ as follows:
$$
\UU(\B^{(1)},\dotsc,\B^{(\ell)}) := \bigcup_{j=1}^{\ell}
\uu_j(\B^{(j)}) \qquad (\B^{(1)},\dotsc, \B^{(\ell)} \in \BB).
$$
The best way to understand $\UU$ is via the abaci.
First we stack $\infty$-abacus displays of $\B^{(j)}$'s on top of each other, with the display of $\B^{(1)}$ at the bottom and that of $\B^{(\ell)}$ at the top.
Next, we cut up this stacked $\infty$-abaci into sections with positions $\{ ae, ae+1, \dotsc, ae + e-1 \}$ ($a \in \Z$), and put the section with positions $\{ ae, ae+1, \dotsc, ae + e-1 \}$ on top of that with positions $\{ (a+1)e, (a+1)e+1, \dotsc, (a+1)e + e-1 \}$ (thus the section of the $\infty$-abacus of $\B^{(1)}$ with positions $\{ ae, ae+1, \dotsc, ae + e-1 \}$ is placed directly on top of the section of the $\infty$-abacus of $\B^{(\ell)}$ with positions $\{ (a+1)e, (a+1)e+1, \dotsc, (a+1)e + e-1 \}$), and relabel the rows of this $e$-abacus by $\Z$ in ascending order, with $0$ labelling the section of the $\infty$-abacus of $\B^{(\ell)}$ with positions $\{ 0, 1, \dotsc, e-1 \}$.
This yields the $e$-abacus display of $\UU(\B^{(1)},\dotsc,\B^{(\ell)})$.

For each $(\bl;\bt) \in \PP^{\ell} \times \ZZ^{\ell}$, define
$$
\core_e(\bl;\bt) := \core_e(\UU(\beta_{\bt}(\bl))) \qquad \text{and} \qquad
\wt_e(\bl;\bt) = \wt_e(\UU(\beta_{\bt}(\bl))).
$$

Post-composing $\UU$ by $\quot_e$ gives a bijection from $\BB^{\ell}$ to $\BB^{e}$.
By intertwining with the bijection $\beta$ between $\PP \times \ZZ$ and $\BB$, this further induces a bijection $\bij_{\ell,e} : \PP^{\ell} \times \ZZ^{\ell} \to \PP^e \times \ZZ^e$ defined by $\bij_{\ell,e}(\bl;\bt) = (\bm;\bu)$ where $\quot_e(\UU(\beta_{\bt}(\bl))) = \beta_{\bu}(\bm)$.  
Note that 
$\bu$ satisfies
\begin{equation} \label{E:bu}
\quot_e(\core_e(\bl;\bt)) = \beta_{\bu}(\EP),
\end{equation}
so that it is completely determined by $\core_e(\bl;\bt)$.
The {\em moving vector} of $(\bl;\bt)$, denoted $\mv_e(\bl;\bt)$, is the $\ell$-tuple $(m_1,\dotsc, m_{\ell}) \in (\ZZ_{\geq 0})^{\ell}$, where, for each $i \in [1,\,\ell]$, $m_i$ is the number of nodes of $\bm$ with $(\ell;\bu)$-residue $\ell-i$.
Since $\wt_e(\bl;\bt) = \wt_e(\UU(\beta_{\bt}(\bl))) = |\bm|$, we see that \begin{equation}
|\mv_e(\bl;\bt)| = \wt_e(\bl;\bt). \label{E:wt-mv}
\end{equation}

\begin{lem} \label{L:-}
Let $\bt = (t_1,\dotsc, t_{\ell}) \in \ZZ^{\ell}$ with $t_1 \leq \dotsb \leq t_{\ell}$. 
Let $\bl \in \PP^{\ell}$ with $\mv_e(\bl;\bt) = (m_1,\dotsc, m_{\ell})$
and $\quot_e(\core_e(\bl;\bt)) = \beta_{\bu}(\EP)$.  
For any $\ba \in (\ZZ_{\geq 0})^e$ with $|\ba| \leq \min\{ m_i \mid i \in [1,\, \ell] \}$,
there exists $\bm \in \PP^{\ell}$ such that 
\begin{align*}
\quot_e(\core_e(\bm;\bt- |\ba|\bone)) &= \beta_{\bu - \ell\ba} (\EP), \\
\mv_e(\bm;\bt-|\ba|\bone) &= \mv_e(\bl;\bt) - |\ba|\bone, 
\end{align*}
where $\bone = (1,\dotsc, 1) \in \ZZ^{\ell}$.
\end{lem}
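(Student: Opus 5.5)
The plan is to prove the statement by induction on $|\ba|$, after recasting it on the rank--level dual side via $\bij_{\ell,e}$, where both conclusions become statements about an $e$-multipartition. If $|\ba| = 0$, take $\bm = \bl$. Otherwise write $\ba = \ba' + \Be_j$ for some $j \in [1,\,e]$, with $\Be_j$ the $j$-th standard basis vector of $\ZZ^e$ and $\ba' \in (\ZZ_{\geq 0})^e$. It then suffices to prove the case $\ba = \Be_j$ under the sole hypothesis $m_i \geq 1$ for all $i$. Indeed, applying that case to the output of the inductive hypothesis for $\ba'$ (whose moving vector is $\mv_e(\bl;\bt) - |\ba'|\bone$, still with all entries $\geq 1$ since $|\ba| \leq \min_i m_i$) gives charge $\bt - |\ba|\bone$, core quotient $\beta_{\bu - \ell\ba}(\EP)$ and moving vector $\mv_e(\bl;\bt) - |\ba|\bone$. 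Note also that $\bt - c\bone$ is still weakly increasing, so the hypothesis on $\bt$ is preserved along the induction.

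For the unit case, write $\bij_{\ell,e}(\bl;\bt) = (\bnu;\bu)$. By \eqref{E:bu} and the definition of the moving vector, the required $\bm$ is the same as a pair $(\bnu';\bu') \in \PP^e\times\ZZ^e$ satisfying three conditions:
\begin{itemize}
\item[(i)] $\bu' = \bu - \ell\Be_j$;
\item[(ii)] for each $i$, the number of nodes of $\bnu'$ of $(\ell;\bu')$-residue $\ell-i$ equals $m_i - 1$;
\item[(iii)] $\bij_{\ell,e}^{-1}(\bnu';\bu')$ has $\ell$-charge exactly $\bt - \bone$.
\end{itemize}
The charge count is consistent: $|\bu'| = |\bu| - \ell = |\bt - \bone|$, since $\fs(\UU(\cdots))$ is additive in both descriptions. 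Two observations about the effect of changing $u_j$ by $-\ell$ guide the construction.
\begin{itemize}
\item Subtracting $\ell$ from $u_j$ shifts the $(\ell;\bu)$-residues of component $j$ by $\ell \equiv_\ell 0$, so those residues are unchanged.
\item Removing exactly one node of each $\ell$-residue from $\bnu$ achieves (ii).
\end{itemize}
So the concrete construction I would try is this. Form $\beta_{u_j}(\nu^{(j)})$, viewed on an $\ell$-abacus, and slide the whole configuration so that its charge becomes $u_j - \ell$. Removing a suitable run of $\ell$ nodes, one of each residue (a "horizontal $\ell$-strip" across all runners), then compensates for the charge change. Here I would use that $\UU$ on the dual side interleaves the runners in the order dictated by $t_1 \leq \dotsb \leq t_\ell$, and I expect this to make each $t_i$ drop by exactly one, giving (iii).

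The main obstacle is that the hypothesis $m_i \geq 1$ is about total residue counts, so the nodes of some residue may sit in components other than $j$; then $\nu^{(j)}$ itself need not contain an $\ell$-strip. My first attempt would be to choose $j$ cleverly, which is not available since $\ba$ is given. So instead I would first move nodes between components using the $\AW_\ell$-type symmetries of the dual side that fix the moving vector and the core; equivalently, via \cite[Proposition 3.13]{LQT}, replace $\bl$ by another multipartition in the same orbit data whose dual has all the relevant nodes in component $j$. This step does not change $\bt$ or $\bu$, only $\bl$, and the lemma only asks for the existence of some $\bm$. Verifying (iii) carefully, i.e.\ tracking how $\quot_e\circ\UU$ transports a uniform decrease of $\bt$ by $\bone$, is the computational core. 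For that I would use \eqref{E:quot} together with the explicit formula for $\uu_i$.
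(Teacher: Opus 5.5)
Your reduction to the unit case $\ba=\Be_j$ and the translation through $\bij_{\ell,e}$ are fine. The unit case, however, has a genuine gap: you have put the difficulty in the wrong place.

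Condition (iii) needs no work and no ordering of $\bt$. By \cite[Corollary 3.5]{LQT}, if $\UU(\beta_{\bs}(\boldsymbol{\kappa}))=\core_e(\bl;\bt)$ then $s_i=t_i-m_i+m_{i-1}$. Lowering $u_j$ by $\ell$ deletes $\ell$ consecutive beads from one runner of the $e$-abacus of the core, and under $\UU$ these come one from each of the $\ell$ components. So the new core has preimage charge $\bs-\bone$, and with moving vector $\mv_e(\bl;\bt)-\bone$ the relation forces the charge to be $\bt-\bone$. Note also that $\UU$ interleaves by component index, not by the values $t_i$. Nor is there any need to work in component $j$, since shifting $u_j$ by $\ell$ leaves all residues unchanged.

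The real content is the \emph{existence} of $\bnu'$ with charge $\bu-\ell\Be_j$ and exactly $m_i-1$ nodes of residue $\ell-i$. This is precisely where $t_1\le\dots\le t_\ell$ is indispensable, and the statement fails without it. Take $e=\ell=2$ and $(\bl;\bt)=((\varnothing,(1));(1,-1))$. Then $\bij_{2,2}(\bl;\bt)=(((2,1),\varnothing);(0,0))$ and $\mv_e(\bl;\bt)=(2,1)$. For either $j$, both entries of $\bu-2\Be_j$ are even, so every nonempty component of $\bnu'$ contains a node of residue $0$. Hence no $\bnu'$ consisting of a single node of residue $1$ exists.

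In this example the only dual multipartitions with the given charge and content are $((2,1),\varnothing)$ and $(\varnothing,(2,1))$. Neither admits removal of one node of each residue, so your ``move nodes between components, then strip one node of each residue'' step cannot be carried out. Your existence argument never uses the ordering of $\bt$, so it would apply equally to this example, where the conclusion is false. The appeal to \cite[Proposition 3.13]{LQT} does not help either: that result concerns $\AW_e$-orbits of blocks, which move the core. It does not produce a different multipartition in the same block.

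The paper closes exactly this gap with the realisability result \cite[Corollary 3.8]{LQT}. It states that for weakly increasing $\bt^-$ and $\mathbf{m}^-\in(\ZZ_{\geq 0})^{\ell}$ with $s^-_i=t^-_i-m^-_i+m^-_{i-1}$ relative to a given core $\UU(\beta_{\bs^-}(\boldsymbol{\kappa}^-))$, some $\bl^-$ has that core and moving vector $\mathbf{m}^-$. Take $\bt^-=\bt-\bone$, $\mathbf{m}^-=\mv_e(\bl;\bt)-\bone$ and $\bs^-=\bs-\bone$. The relation is then inherited from \cite[Corollary 3.5]{LQT}, the ordering of $\bt^-$ from that of $\bt$, and nonnegativity of $\mathbf{m}^-$ from $\min_i m_i\geq 1$. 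To complete your route, you would need to cite this result or prove an equivalent realisability statement on the dual side yourself, using the ordering of $\bt$ in an essential way.
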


\begin{proof}
We prove by induction on $|\ba|$, where the base case $|\ba| =0$ is trivial.  
For $|\ba| >0$, let $\ba = (a_1,\dotsc, a_e)$ and suppose that $a_j \ne 0$.
Let $(\boldsymbol{\kappa};\bs) \in \PP^{\ell} \times \ZZ^{\ell}$ be such that $\UU(\beta_{\bs}(\boldsymbol{\kappa})) = \core_e(\bl;\bt)$.
Then 
\begin{align*}
s_i &= t_i - m_i + m_{i-1}
\end{align*}
for all $i \in [1,\,\ell]$, where $\bs = (s_1,\dotsc, s_{\ell})$ and $m_0 = m_{\ell}$, by \cite[Corollary 3.5]{LQT}.
Also,
$$ \quot_e(\UU(\beta_{\bs}(\boldsymbol{\kappa}))) = \quot_e(\core_e(\bl;\bt)) = \beta_{\bu}(\EP).$$ 
Let $(\boldsymbol{\kappa}^-; \bs^-) \in \PP^{\ell} \times \ZZ^{\ell}$ be such that
$$
\quot_e(\UU(\beta_{\bs^-}(\boldsymbol{\kappa}^-))) = \beta_{\bu - \ell\,\Be_j}(\EP),
$$
where $\{ \Be_1,\dotsc, \Be_e \}$ denotes the standard $\ZZ$-basis for $\ZZ^e$.
Then $\bs^- = \bs - \bone$.  
Let $\bs^- = (s^-_1,\dotsc, s^-_{\ell})$, and 
\begin{align*}
\bt^{-} = (t^-_1,\dotsc, t^-_{\ell}) &:= \bt - \bone; \\
\mathbf{m}^- = (m^-_1,\dotsc, m^-_{\ell}) &:= \mv_e(\bl;\bt) - \bone.
\end{align*}
Then $t^-_1 \leq \dotsb \leq t^-_{\ell}$ and $\mathbf{m}^- \in (\ZZ_{\geq 0})^{\ell}$.
Since
$$
s^-_i = s_i - 1 = t_i - m_i + m_{i-1} - 1 = t^-_i - m^-_i + m^-_{i-1}$$
for all $i \in [1,\, \ell]$, where $m^-_0 = m^-_{\ell}$,
there exists $\bl^- \in \PP^{\ell}$ such that 
\begin{align*}
\core_e(\bl^-;\bt^-) &= \UU(\beta_{\bs^-}(\boldsymbol{\kappa}^-)), \\
\mv_e(\bl^-;\bt^-) &= \mathbf{m}^-
\end{align*} by \cite[Corollary 3.8]{LQT}.
Then $\quot_e(\core_e(\bl^-,\bt^-)) = \quot_e(\UU(\beta_{\bs^-}(\boldsymbol{\kappa}^-))) = \beta_{\bu-\ell\, \Be_j}(\EP)$.
Let $\ba^- = \ba - \Be_j$; then $\ba^- \in (\ZZ_{\geq 0})^{e}$ and $$|\ba^-| = |\ba| - 1 \leq \min\{ m_i \mid i \in [1,\, \ell] \} - 1 = \min\{ m_i -1\mid i \in [1,\, \ell] \} = \min\{ m^-_i \mid i \in [1,\, \ell] \}.$$ 
Thus, by induction applied to $(\bl^-,\bt^-)$, there exists $\bm \in \PP^{\ell}$ such that 
\begin{align*}
\quot_e(\core_e(\bm;\bt^- - |\ba^-|\bone)) &= \beta_{(\bu - \ell\, \Be_j) - \ell\ba^-}(\EP) = \beta_{\bu - \ell\ba}(\EP), \\
\mv_e(\bm;\bt^- - |\ba^-|\bone) &= \mv_e(\bl^-;\bt^-) - (|\ba^-|)\bone = \mv_e(\bl;\bt) - |\ba|\bone.
\end{align*}
Since $\bt^- - |\ba^-| \bone = \bt - |\ba|\bone$, we are done.
\end{proof}

\subsection{The extended affine Weyl group $\EW_{m}$} \label{SS:Weyl}

Let $m \in \Z^+$.
The symmetric group on $m$ letters is denoted by $\sym{m}$.  We view $\sym{m}$ as a group of functions acting on $\{1,2,\dotsc,m\}$ so that we compose from right to left; for example, $(1,2)(2,3) = (1,2,3)$.  This is a Coxeter group of type $A_{m-1}$, with Coxeter generators $\cs_i = (i,i+1)$ for $1 \leq i \leq m-1$.

Given any nonempty set $X$, $\sym{m}$ has a natural right place permutation action on $X^{m}$ via
$$
(x_1,\dotsc, x_{m})^{\sigma} = (x_{\sigma(1)},\dotsc, x_{\sigma(m)})
$$
for $(x_1,\dotsc, x_m) \in X^m$ and $\sigma \in \sym{m}$.

Let $\{ \Be_1, \dotsc, \Be_m\}$ denote the standard $\Z$-basis for the free (left) $\Z$-module $\Z^m$.
The right place permutation action of $\sym{m}$ on $\Z^m$ induces the semidirect product $\EW_m = \sym{m} \ltimes \Z^m$,
where
\begin{equation*}
(\sigma \bt)(\tau \bu) = (\sigma\tau) (\bt^{\tau} + \bu) = (\sigma\tau) (t_{\tau(1)} + u_1, \dotsc, t_{\tau(m)}+ u_m) 
\end{equation*}
for all $\sigma,\tau \in \sym{m}$ and $\bt,\bu \in \Z^m$ with $\bt = (t_1,\dotsc, t_m)$ and $\bu = (u_1,\dotsc, u_m)$.
This is an extended affine Weyl group of type $A^{(1)}_{m-1}$, and it contains the affine Weyl group $\AW_m$ of type $\widetilde{A}_{m-1}$ generated by $\{ \cs_0,\cs_1, \dotsc, \cs_{m-1}\}$, where $\cs_a = (a,a+1) \in \sym{m}$ for $1 \leq a \leq m-1$ as before, and $\cs_0 = (1,m)(\Be_m - \Be_1)$.
Note that under this realisation,
\begin{equation}
\AW_m = \{ \sigma \bt \in \EW_m \mid \sigma \in \sym{m},\ \bt \in \ZZ^m,\ |\bt| = 0 \}. \label{E:affineWeylgroup}
\end{equation}


Given $w = \sigma \bt \in \EW_m$ where $\sigma \in \sym{m}$ and $\bt \in \ZZ^{m}$, we write $\overline {w}$ for $\sigma$ (thus $\overline{w}$ denotes the projection of $w$ onto $\sym{m}$).
In addition, let $\rr_m \in \sym{m}$ denote the ascending $m$-cycle $(1,2,\dotsc, m)$ in $\sym{m}$.

\begin{lem}[{\cite[Lemma 2.7(1)]{LQT}}] \label{L:AA}
Let $m \in \ZZ^+$ and $\rr_{m} = (1,2,\dotsc, m) \in \sym{m}$.
For $a \in \ZZ$ and $b \in \ZZ/m\ZZ$, we have
$$(\rr_{m}\Be_{m})^{am + b} = \rr_{m}^b \bv_{m}(am + b),$$ where 
$
\bv_{m}(am+b) := (a,\dotsc, a, \underbrace{a+1,\dotsc, a+1}_{b \text{ times}}) \in \ZZ^{m}.
$
\end{lem}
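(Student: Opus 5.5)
The plan is to avoid separate treatments of positive and negative exponents. For every $n \in \ZZ$, write $n = am+b$ with $a \in \ZZ$ and $b \in [0,\,m-1]$, and set $w_n := \rr_m^b\, \bv_m(n) \in \EW_m$. The lemma then follows from two facts:
\begin{itemize}
\item[(i)] $w_0 = 1$;
\item[(ii)] $w_{n+1} = w_n\,(\rr_m\Be_m)$ for every $n \in \ZZ$.
\end{itemize}
Given these, induction upwards from $0$ gives $w_n = (\rr_m\Be_m)^n$ for $n \geq 0$. For the downward direction, (ii) can be rewritten as $w_n = w_{n+1}(\rr_m\Be_m)^{-1}$, so induction downwards from $0$ gives the same equality for $n<0$. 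Fact (i) is immediate: for $n=0$ we have $a=b=0$, and $\bv_m(0)$ is the zero vector.

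For (ii), I would compute the product directly using the multiplication rule $(\sigma\bt)(\tau\bu) = (\sigma\tau)(\bt^{\tau}+\bu)$ in $\EW_m$:
$$w_n(\rr_m\Be_m) = (\rr_m^{b}\rr_m)\bigl(\bv_m(n)^{\rr_m} + \Be_m\bigr).$$
By the definition of the right place permutation action, $(x_1,\dotsc,x_m)^{\rr_m} = (x_2,\dotsc,x_m,x_1)$, since $\rr_m(i)=i+1$ for $i<m$ and $\rr_m(m)=1$.

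Next, write $\bv_m(n)$ as $m-b$ entries equal to $a$ followed by $b$ entries equal to $a+1$. Because $b \leq m-1$, the first coordinate of $\bv_m(n)$ is $a$. The cyclic shift therefore produces $m-b-1$ entries equal to $a$, then $b$ entries equal to $a+1$, then a final entry $a$. Adding $\Be_m$ raises that final entry to $a+1$. The result is $m-b-1$ entries equal to $a$ followed by $b+1$ entries equal to $a+1$.

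There are now two cases for the final identification.
\begin{itemize}
\item If $b+1 \leq m-1$, then $n+1 = am+(b+1)$ is already in normal form. The vector above is exactly $\bv_m(n+1)$, and the permutation is $\rr_m^{b+1}$, so the product is $w_{n+1}$.
\item If $b+1=m$, then $n+1 = (a+1)m + 0$. In this case $\rr_m^{m}=1$, and the vector above is $(a+1,\dotsc,a+1) = \bv_m((a+1)m)$, so again the product is $w_{n+1}$.
\end{itemize}

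The main point needing care is the wrap-around case $b = m-1$, together with keeping track of the direction of the place permutation, namely that $\bt^{\tau}$ has $i$-th entry $t_{\tau(i)}$, which makes the shift move entries to the left. I expect the argument to be routine once these two points are handled correctly.
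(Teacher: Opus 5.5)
Your proof is correct. The product $w_n(\rr_m\Be_m) = \rr_m^{b+1}\bigl(\bv_m(n)^{\rr_m}+\Be_m\bigr)$ and the left cyclic shift $(x_1,\dotsc,x_m)^{\rr_m}=(x_2,\dotsc,x_m,x_1)$ are both computed correctly. The two cases $b+1<m$ and $b+1=m$ also check out, including the degenerate case $m=1$, where every step is a wrap-around step. The two-sided induction from $w_0=1_{\sym{m}}\mathbf{0}$ then covers all $n\in\ZZ$.

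The paper does not prove this lemma itself; it simply quotes \cite[Lemma 2.7(1)]{LQT}. So there is no in-paper argument to compare with, and your proposal is a self-contained verification.

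A common alternative is to check the formula only for $0\le n\le m$. This gives $(\rr_m\Be_m)^m = 1_{\sym{m}}(1,\dotsc,1)$, which is central in $\EW_m$ because $(1,\dotsc,1)^{\tau}=(1,\dotsc,1)$ for every $\tau$. One then writes $(\rr_m\Be_m)^{am+b} = \bigl(1_{\sym{m}}(a,\dotsc,a)\bigr)\,\rr_m^b\bv_m(b) = \rr_m^b\bv_m(am+b)$, and negative $a$ is handled by inverting a central translation. Your version replaces that centrality observation with a single uniform step valid for every $n\in\ZZ$. The price is that you must treat the wrap-around case $b=m-1$ explicitly, which you did.
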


We now focus on the case where $m = \ell$.
We have a right action of $\EW_{\ell}$ on $\Z^{\ell}$ and $\PP^{\ell}$  via
\begin{align*}
\bt^{\sigma \bu} = \bt^{\sigma} + e\bu \quad \text {and} \quad
\bl^{\sigma \bu} = \bl^{\sigma}
\end{align*}
for $\bl \in \PP^{\ell}$, $\bt, \bu \in \Z^{\ell}$ and $\sigma \in \sym{\ell}$, which further induces a right action on $\PP^{\ell}\times \Z^{\ell}$.
We write $x^{\EW_{\ell}}$ for the orbit of $x$ (for $x$ in $\Z^{\ell}$ or $\PP^{\ell}$ or $\PP^{\ell}\times \Z^{\ell}$), and note that $\res^{\bt}_e(\bl)$ and $\Hub_e(\bl;\bt)$ are invariant under this action; i.e.\
\begin{equation*}
\res^{\bt^w}_e(\bl^w) = \res^{\bt}_e(\bl) \quad \text{and} \quad \Hub_e((\bl;\bt)^w) = \Hub_e(\bl;\bt) \label{E:res-hub}
\end{equation*}
for all $w \in \EW_{\ell}$.

\begin{prop} \label{P:abacus}
Let $(\bl;\bt) \in \PP^{\ell} \times \ZZ^{\ell}$, 
and let $\bij_{\ell,e}(\bl;\bt) = (\bm;\bu)$. 
Let $\bm = (\mu^{(1)},\dotsc, \mu^{(e)})$ and $\bu = (u_1,\dotsc, u_e)$, and further write $\C_j$ for $\beta_{u_j}(\mu^{(j)})$ for each $j \in [1,\,e]$.
\begin{enumerate}
\item 
If $\mu^{(j)} \ne \varnothing$ (equivalently, $\min(\ZZ \setminus \C_j) < u_j \leq \max(\C_j)$), then $\mu^{(j)}$ has at least
$$
\left\lfloor \frac{\max(\C_j) -i}{\ell} \right\rfloor  - \left\lfloor \frac{\min(\ZZ \setminus \C_j) -i}{\ell} \right\rfloor
$$
nodes with $(\ell,u_j)$-residue $i$
for all $i \in \ZZ/\ell\ZZ$.

\item Let $j \in \ZZ/e\ZZ$.  The following statements are equivalent:
\begin{enumerate}
\item 
$\bl^{w}$ has an addable node with $(e,\bt^w)$-residue $j$ for all $w \in \EW_{\ell}$.
\item $\bl^w$ has an addable node with $(e,\bt^w)$-residue $j$ for some $w \in \EW_{\ell}$.
\item $\lambda^{(i)}$ has an addable node with $(e,t_i)$-residue $j$ for some $i$.
\item there exists $x \in \C_j$ such that $x + \delta_{j0} \ell \notin \C_{j+1}$, where $\C_{0} := \C_e$.
\end{enumerate}
\end{enumerate}
\end{prop}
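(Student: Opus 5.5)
Part (1) is a counting statement on a single $\ell$-abacus. I will view $\C_j$ on its $\ell$-abacus and recall that nodes of $\mu^{(j)}$ with $(\ell,u_j)$-residue $i$ correspond to pairs (bead at $y$, vacancy at $x<y$) with the hook length and residue bookkeeping. The cleanest route is the standard fact that the number of nodes of residue $i$ equals a sum over beads in $\C_j$ of vacancies to their left. More precisely, I will use that the content multiset of $\beta^{-1}(\C_j)$ is determined by the numbers of beads $\geq k$ minus the charge shift. Concretely, the number of nodes of content $c$ (shifted by $u_j$) equals $|\{y\in \C_j: y\ge c+1\}| - |\{y \notin \C_j : y\le c\}|$ corrected to be nonnegative. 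Simpler still: let $a=\min(\ZZ\setminus\C_j)$ and $b=\max(\C_j)$. Every integer $c$ with $a\le c< b$ is the residue-content of at least one node, since the diagonal through content $c-u_j$ meets the diagram. Indeed, a node of content $c-u_j$ exists iff the number of beads $>c$ exceeds the number of beads of $\beta_{u_j}(\varnothing)$ that are $>c$, which holds for $a\le c<b$. Counting integers $c\in[a,b-1]$ with $c\equiv_\ell i$ gives exactly $\lfloor (b-i)/\ell\rfloor-\lfloor (a-i)/\ell\rfloor$. I need to double-check the off-by-one in the residue convention (residue of node $(r,s)$ is $s-r+u_j$, and the corresponding boundary step lies between positions $c$ and $c+1$). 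I will fix that by checking the one-box case.

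For (2), the implications (a)$\Rightarrow$(b) are trivial. For (b)$\Leftrightarrow$(c) I will use invariance under $\EW_\ell$. The generator $\sigma\in\sym\ell$ just permutes components. A translation $\bt\mapsto\bt+e\bu$ shifts each $\beta$-set by a multiple of $e$, which preserves residues mod $e$ of addable nodes. Hence "some component has an addable node of residue $j$" is an $\EW_\ell$-invariant property, giving (a)$\Leftrightarrow$(b)$\Leftrightarrow$(c) at once.

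The main step is (c)$\Leftrightarrow$(d). By \S\ref{SS:beta sets}, an addable node of residue $j$ in component $i$ is an $x\notin \beta_{t_i}(\lambda^{(i)})$ with $x-1\in\beta_{t_i}(\lambda^{(i)})$ and $x\equiv_e j$. Since $\uu_i$ is increasing and carries runner residues mod $e$ consistently, I will translate this into $\UU$. Take $y=\uu_i(x-1)\in \UU(\beta_\bt(\bl))$, lying on runner $j-1$. If $j\neq 0$ then $\uu_i(x)=y+1$ lies on runner $j$ in the same row. If $j=0$ then $x-1\equiv_e e-1$, so $\uu_i(x)=y+1+(\ell-1)e\ell/\ell\cdot$... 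I will compute that $\uu_i(x)=y+1+e(\ell-1)$, i.e.\ $\ell$ rows further down minus one. On the quotient this says: a bead at position $p$ on runner $j-1$ whose "partner" on runner $j$ at the same row (shifted by $\ell$ rows when $j=0$) is empty. Reindexing runners as $\C_1,\dots,\C_e$, this is exactly condition (d): $x\in\C_j$ with $x+\delta_{j0}\ell\notin\C_{j+1}$.

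The conversion between runner labels and the indexing $\C_{j}$ vs runner $j-1$ needs care, and I will match it against $\quot_e$'s definition, where $\B_{i+1}$ is runner $i$. Also, every bead of $\UU$ comes from a unique $(i,x)$, so the translation is a bijection in both directions. I expect the $j=0$ wraparound bookkeeping to be the only real obstacle.
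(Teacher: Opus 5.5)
Your proposal is correct and follows the paper's proof. For (2) you use the same $\EW_\ell$-invariance of residues of addable nodes to get (a)$\Leftrightarrow$(b)$\Leftrightarrow$(c), and the same comparison of $\uu_i(x-1)$ with $\uu_i(x)$ for (c)$\Leftrightarrow$(d): same row on the next runner, or from runner $e-1$ to runner $0$ and $\ell$ rows down when $j=0$. For (1), your diagonal argument does the same job as the paper's rim hook from $\max(\C_j)$ to $\min(\ZZ\setminus\C_j)$: both show that every value in the relevant range is attained by a node.

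The off-by-one you flagged is real. Your bead-count criterion with beads $>c$ detects a node of value $c+1$, not a node of content $c-u_j$. With $a=\min(\ZZ\setminus\C_j)$ and $b=\max(\C_j)$, the values $s-r+u_j$ of the nodes fill exactly $[a+1,\,b]$, not $[a,\,b-1]$. It is $[a+1,\,b]$ whose members $\equiv_\ell i$ number $\lfloor (b-i)/\ell\rfloor-\lfloor (a-i)/\ell\rfloor$, so your final formula is right once the interval is corrected.
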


\begin{proof} \hfill
\begin{enumerate}
\item Since moving the bead at $x \in \C_j$ to a vacant position $y \in \ZZ \setminus \C_j$ (with $x > y$) corresponds to removing from the Young diagram of $\mu^{(j)}$ a rimhook of size $x-y$, beginning at the top with a node with $(\ell,u_j)$-residue $\equiv_{\ell} x$ and ending at the bottom with a node with $(\ell,u_j)$-residue $\equiv_{\ell} y + 1$, each $b \in [\min(\ZZ \setminus \C_j)+1,\, \max(\C_j)]$ contributes at least one node of $\mu^{(j)}$ with $(\ell,u_j)$-residue $\equiv_{\ell} b$.
Thus $\mu^{(j)}$ has at least
$$
N_i := |\{ b \in [\min(\ZZ \setminus \C_j)+1,\, \max(\C_j)] \mid b \equiv_{\ell} i \}|
$$
nodes with $(\ell,u_j)$-residue $i$.
Now, if $\max(\C_j) - i = q_M \ell + r_M$ and $\min(\ZZ \setminus \C_j) -i = q_m \ell + r_m$ where $q_M,q_m \in \ZZ$ and $r_M, r_m \in \ZZ/\ell\ZZ$, then
$$
\{ b \in [\min(\ZZ \setminus \C_j)+1,\, \max(\C_j)] \mid b \equiv_{\ell} i \}
= \{ a \ell + i \mid a \in [q_m+1,q_M] \}.
$$
Thus,
$$
N_i = |[q_m + 1, q_M]| = q_M - q_m = \left\lfloor \frac{\max(\C_j) -i}{\ell} \right\rfloor  - \left\lfloor \frac{\min(\ZZ \setminus \C_j) -i}{\ell} \right\rfloor
$$
as desired.

\item 
It is easy to see that (a) $\Rightarrow$ (b) $\Rightarrow$ (c) $\Rightarrow$ (a), so we only show (c) $\Leftrightarrow$ (d).

We have seen that $\lambda^{(i)}$ having an addable node with $(e,t_i)$-residue $j$ is equivalent to having a $b \in \beta_{t_i}(\lambda^{(i)})$ with $b \equiv_e j-1$ such that $b+1 \notin \beta_{t_i}(\lambda^{(i)})$.
Let $b = qe + r$ where $q \in \ZZ$ and $r \in \ZZ/e\ZZ$.
Then $r \equiv_e j-1$, and 
\begin{align*}
\uu_i(b) &= qe\ell + (\ell -i)e + r = (q\ell + \ell -i)e + r ; \\
\uu_i(b+1) &= (q+\delta_{r,e-1})e\ell + (\ell-i)e + (r+1 - \delta_{r,e-1}e) \\
&= (q\ell  + \delta_{r,e-1}\ell + \ell-i) e + (r+1 - \delta_{r,e-1}e).
\end{align*}
Thus, $q\ell + \ell -i \in \C_{r+1} = \C_j$ and $q\ell  + \delta_{r,e-1}\ell + \ell-i \notin \C_{r+2} = \C_{j+1}$. Setting $x = q\ell + \ell -i$ shows (c) $\Rightarrow$ (d).  

For the converse, let $i \in [1,\, \ell]$ be such that $i\equiv_{\ell} -x$. 
Then $x \in \C_j$ implies that $$(x+\delta_{j0})e+j-1 = \uu_i(b)$$ for some $b \in \beta_{t_i}(\lambda^{(i)})$.  Now $\uu_i(b+1) = (x+\delta_{j0}\ell)e+j$  so that $x+\delta_{j0}\ell \notin \C_{j+1}$ implies that $b+1 \notin \beta_{t_i}(\lambda^{(i)})$, and the proof is complete.
\end{enumerate}
\end{proof}

\begin{rem}
  Proposition \ref{P:abacus}(1) may be considered as a generalisation of \cite[Lemma 4.2]{LQT}, and forms the basis for the necessary and sufficient condition for two blocks of Ariki-Koike algebras to be Scopes equivalent in this paper.
\end{rem}

An important subset of $\ell$-charges is the following:
\begin{align*}
\index{$\AAbar$} \AAbar &:= \{ (a_1,\dotsc, a_{\ell}) \in \ZZ^{\ell} \mid a_1 \leq a_2 \leq \dotsb \leq a_{\ell} \leq a_1 + e \}.
\end{align*}
In particular, this subset appears in the following result on how the $e$-cores and the $e$-weights of the $\ell$-partitions, with their associated charges, in the same $\EW_{\ell}$-orbit are related.

\begin{thm}[{\cite[Theorem 3.6]{LT}}] \label{T:core-n-weight}
Let $(\bl;\bt) \in \PP^{\ell} \times \ZZ^{\ell}$, and let $(\bm;\bu) \in (\bl;\bt)^{\EW_{\ell}}$.  Then
\begin{enumerate}
  \item $\beta^{-1}(\core_e(\bm;\bu)) = \beta^{-1}(\core_e(\bl;\bt))$;
 \item $\wt_e(\bm;\bu) = \min( \wt_e( (\bl;\bt)^{\EW_{\ell}} ) )$ if and only if $\bu \in \AAbar$.
\end{enumerate}
\end{thm}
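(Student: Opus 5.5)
The plan is to work directly with the $e$-quotient charges of $\UU(\beta_{\bt}(\bl))$. It suffices to check invariance under a generating set of $\EW_{\ell}$: the place permutations $\sigma \in \sym{\ell}$ and the translations $\Be_i$ (acting on charges by $\bt \mapsto \bt + e\Be_i$).

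For part (1), recall that $\core_e(\B)$ is determined by the charges $\fs(\B_1),\dotsc,\fs(\B_e)$ of the runners in $\quot_e(\B)$. Moreover, adding the same constant to all runner charges replaces the core $\beta$-set by a shift $\B^{+k}$ (using \eqref{E:quot} with $b=0$, which shifts every runner by $a$). Such a shift does not change $\beta^{-1}$. So it is enough to show the following: each generator either preserves every runner charge of $\UU(\beta_{\bt}(\bl))$, or changes all of them by the same amount.

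From the definition of $\uu_i$, the beads of $\beta_{t_i}(\lambda^{(i)})$ lying in positions $ae+r$ ($0\le r<e$) become beads on runner $r$ in row $a\ell+(\ell-i)$. Hence the charge of runner $r$ of $\UU$ is the sum over $i$ of the number of beads of $\beta_{t_i}(\lambda^{(i)})$ congruent to $r$ modulo $e$, suitably normalised. Concretely, the contribution of component $i$ to the charge of runner $r$ is $\fs$ of the $(r+1)$-th $e$-quotient component of $\beta_{t_i}(\lambda^{(i)})$. This contribution depends only on $t_i$ and the $e$-core data of $\lambda^{(i)}$, and not on $i$, because the offset $(\ell-i)$ occupies the same row residue class modulo $\ell$ consistently for negative and nonnegative positions.
\begin{itemize}
\item A permutation $\sigma$ permutes the pairs $(\lambda^{(i)},t_i)$, so it leaves each runner charge unchanged.
\item The translation $t_i \mapsto t_i+e$ replaces $\beta_{t_i}(\lambda^{(i)})$ by its shift by $e$. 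By \eqref{E:quot} this raises each $e$-quotient charge of that component by $1$, hence raises every runner charge of $\UU$ by $1$.
\end{itemize}
This proves (1). The step needing care is confirming that the contribution of component $i$ really does not depend on the position index $i$, i.e.\ that the offset $(\ell-i)e$ in $\uu_i$ never moves a bead across the normalisation boundary between rows $<0$ and $\geq 0$. This follows since $0\le \ell-i<\ell$.

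For part (2), use $\wt_e(\B) = \sum_j |\beta^{-1}(\B_j)|$. Combined with (1), the weight of any element of the orbit equals $|\beta^{-1}(\UU(\beta_{\bu}(\bm)))| - |\beta^{-1}(\core_e(\bl;\bt))|$. The second term is constant on the orbit. In the first term, the size of the partition of $\UU(\beta_{\bu}(\bm))$ splits as $\sum |\mu^{(i)}|$ times a factor depending only on sizes (this quantity is constant on the orbit, since $\bm$ is a permutation of $\bl$), plus a term $Q(\bu)$ depending only on the charge. For the empty multipartition, I would compute $Q(\bu) = |\beta^{-1}(\UU(\beta_{\bu}(\EP)))|$ explicitly. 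I expect a quadratic of the form
\[
Q(\bu) = \tfrac{\ell}{2e}\textstyle\sum_i u_i^2 + (\text{linear in } \bu) - \tfrac{1}{2e\ell}\Bigl(\sum_i u_i\Bigr)^2 + \text{const},
\]
possibly with a piecewise-linear correction coming from residues modulo $e$. Making this formula precise, including the reduction of general $\bm$ to $\EP$ up to a constant, is the main obstacle. I would first try to get it by counting, for each runner, the partition size of a $\beta$-set that is a union of arithmetic progressions.

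Once $Q$ is in hand, $|\bu|$ is fixed modulo the orbit's constraint and $\bu$ ranges over $\bt^{\sym{\ell}} + e\ZZ^{\ell}$. It then remains to show that $Q$ is minimised exactly when $u_1\le\dots\le u_\ell\le u_1+e$. I would do this by an exchange argument.
\begin{itemize}
\item If $u_i > u_{i+1}$, swapping the two entries strictly decreases $Q$. This uses the linear term, which weights positions by $\ell-i$.
\item If $u_\ell > u_1+e$, the move $u_\ell \mapsto u_\ell - e$, $u_1\mapsto u_1+e$ followed by the cyclic permutation $\rr_\ell$ strictly decreases $Q$.
\item Conversely, any two elements of $\AAbar$ in the same orbit are related by such moves with zero change in $Q$, namely the boundary cases of equality. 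So all of them achieve the minimum.
\end{itemize}
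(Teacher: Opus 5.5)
\textbf{Part (1).} Your argument is correct. The $(b+1)$-th runner of $\UU(\beta_{\bu}(\bm))$ has charge equal to the sum over $i$ of the $(b+1)$-th $e$-quotient charges of the $\beta_{u_i}(\mu^{(i)})$. Permutations leave these sums unchanged, and $u_i\mapsto u_i+e$ raises every one of them by $1$, which shifts the core $\beta$-set by $e$. The paper only quotes this theorem from Li--Tan, so there is no in-paper proof to compare with, and I judge your part (2) on its own.

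\textbf{Part (2), the reduction and the exchange moves.} The plan of splitting off an orbit-invariant term and minimising a function of $\bu$ is sound, but two steps are wrong as written.

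The splitting is not ``$|\bm|$ times a factor''. Adding to $\mu^{(i)}$ a node of $(e,u_i)$-residue $j$ moves one bead of $\UU(\beta_{\bu}(\bm))$ by $1$ if $j\neq 0$, but by $(\ell-1)e+1$ if $j=0$. Hence
\[
|\beta^{-1}(\UU(\beta_{\bu}(\bm)))| = |\beta^{-1}(\UU(\beta_{\bu}(\EP)))| + |\bm| + (\ell-1)e\,n_0(\bm;\bu),
\]
where $n_0$ counts the nodes of residue $0$. Residue multisets are orbit invariants. Combining this with part (1), applied to both $(\bm;\bu)$ and $(\EP;\bu)$, gives $\wt_e(\bm;\bu)=\wt_e(\EP;\bu)+K$ with $K$ constant on the orbit, which is what you need.

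Your second move, $\bu\mapsto(\bu+e\Be_1-e\Be_{\ell})^{\rr_{\ell}}$, need not lower the weight once $\ell\geq 3$. For $\ell=3$ and $e=2$ one computes $\wt_2(\EP;(0,3,3))=2$ but $\wt_2(\EP;(3,1,2))=3$. The move that works is $\cs_0$, namely $\bu\mapsto(u_\ell-e,u_2,\dots,u_{\ell-1},u_1+e)$. It changes the weight by $e+u_1-u_\ell$, while $\cs_i$ changes it by $u_{i+1}-u_i$.

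\textbf{Part (2), the genuine gap: the converse step.} Every move you list preserves $|\bu|$. But $|\bu|$ is only constant modulo $e$ on an orbit, and an orbit meets $\AAbar$ at several levels. For example, $(0,\dots,0)$ and $(0,\dots,0,e)$ lie in the same orbit and both lie in $\AAbar$, yet no chain of ``boundary'' moves connects them. So you have not shown that all points of $\AAbar$ in an orbit have the same weight.

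To close this along your route you would need two further facts:
\begin{enumerate}
\item $\rr_{\ell}\Be_{\ell}\colon\bu\mapsto(u_2,\dots,u_\ell,u_1+e)$ maps $\AAbar$ to itself and preserves the weight;
\item each level set $\AAbar\cap\{|\bu|=c\}$ meets an $\AW_{\ell}$-orbit at most once.
\end{enumerate}

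\textbf{A shorter route.} This avoids both the exchange argument and any explicit formula for $Q$.
\begin{enumerate}
\item The $(b+1)$-th runner of $\UU(\beta_{\bu}(\EP))$ is the $\ell$-core whose $\ell$-runner $\ell-i$ has charge $c^{(b)}_i=\lceil (u_i-b)/e\rceil$.
\item Hence $\wt_e(\EP;\bu)=0$ if and only if, for each $b\in[0,\,e-1]$, this $\beta$-set equals some $\ZZ_{<s}$. That is, $c^{(b)}_1\leq\dots\leq c^{(b)}_\ell\leq c^{(b)}_1+1$ for every $b$.
\item This condition holds for all $b$ when $\bu\in\AAbar$, because $x\mapsto\lceil (x-b)/e\rceil$ is monotone.
\item It fails for $b\equiv_e u_{i+1}$ if $u_i>u_{i+1}$. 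It fails for $b\equiv_e u_1$ if $u_1\leq\dots\leq u_\ell$ and $u_\ell>u_1+e$.
\item Every orbit meets $\AAbar$: reduce the $u_i$ modulo $e$ and sort.
\end{enumerate}
Since $\wt_e(\EP;\bu)\geq 0$, part (2) then follows from $\wt_e(\bm;\bu)=\wt_e(\EP;\bu)+K$.
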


We now look at the case where $m = e$.
For each $k \in \ZZ$, we have a left action $\DDot{k}$ of $\EW_e$ on $\Z$ defined by
\begin{alignat*}{2}
\cs_i \DDot{k} x &=
\begin{cases}
x-1, &\text{if } x \equiv_e i \\
x+1, &\text{if } x \equiv_e i-1 \\
x, &\text{otherwise}
\end{cases}
&\qquad & (i \in [ 1,\, e-1]); \\
\Be_j \DDot{k} x &=
\begin{cases}
x+k e, &\text{if } x \equiv_e j-1 \\
x, &\text{otherwise}
\end{cases}
&& (j \in [1,\, e]).
\end{alignat*}

We extend this left action $\DDot{k}$ to the set $\BB$ of $\beta$-sets by $w \DDot{k} \B = \{ w \DDot{k} b \mid b \in \B \}$ ($w\in \EW_e$, $\B \in \BB$),
and
to $\BB^{\ell}$ by $w \DDot{k} (\B_1,\dotsc, \B_{\ell}) = (w\DDot{k}\B_1, \dotsc, w \DDot{k} \B_{\ell})$ ($w\in \EW_e$, $(\B_1,\dotsc, \B_{\ell}) \in \BB^{\ell}$).

For $\bl = (\lambda^{(1)},\dotsc, \lambda^{(\ell)}) \in \PP^{\ell}$ and $\bt = (t_1,\dotsc, t_{\ell}) \in \Z^{\ell}$, we have
a left action $\DDot{\bt}$ of $\AW_e$  on $\PP^{\ell}$,
where for each $j \in \ZZ/e\ZZ$, $\cs_j \DDot{\bt} \bl$ is the $\ell$-partition obtained from $\bl$ by removing all its removable nodes of $(e,\bt)$-residue $j$ and adding all its addable nodes of $(e,\bt)$-residue $j$.
This satisfies
\begin{equation*}
\beta_{\bt}(\cs_j \DDot{\bt} \bl) = \cs_j \DDot{1} \beta_{\bt}(\bl)
\end{equation*}
by \cite[(2.17)]{LT}.

We also have a natural left action $\CDot{k}$ of $\EW_e$ on $\BB^e$ and $\ZZ^e$ as follows:
\begin{align*}
\cs_i \CDot{k} (\B_1,\dotsc, \B_{e}) &= (\B_1,\dotsc,\B_{i-1}, \B_{i+1}, \B_{i}, \B_{i+2},\dotsc, \B_{e}), \\
\Be_j \CDot{k} (\B_1,\dotsc, \B_{e}) &= (\B_1,\dotsc,\B_{j-1}, \B_j^{+k}, \B_{j+1},\dotsc, \B_{e}), \\
\cs_i \CDot{k} (a_1,\dotsc, a_{e}) &= (a_1,\dotsc,a_{i-1}, a_{i+1}, a_{i}, a_{i+2},\dotsc, a_{e}), \\
\Be_j \CDot{k} (a_1,\dotsc, a_{e}) &= (a_1,\dotsc,a_{j-1}, a_j +k, a_{j+1},\dotsc, a_{e}),
\end{align*}
for all $i \in [1,\,e-1]$, $j \in [1,\,e]$, $(\B_1,\dotsc, \B_e) \in \BB^e$ and $(a_1,\dotsc, a_e) \in \ZZ^e$.
Another formulation for $\CDot{k}$ on $\ZZ^e$ is:
\begin{equation} \label{E:CDot}
\sigma \bs \CDot{k} \bt = (k\bs + \bt)^{\sigma^{-1}}
\end{equation}
for $\sigma \in \sym{e}$ and $\bs,\bt \in \ZZ^e$.

Note that $\CDot{k}$ and $\DDot{k}$ are related in the following way:
\begin{align}
w \CDot{k} \quot_e(\B) &= \quot_e( w \DDot{k} \B) \label{E:CDot-tuple-beta-sets}
\end{align}
for all $w \in \EW_e$ and $\B \in \BB$. 





\section{Ariki-Koike algebras} \label{S:main}

\subsection{Preliminaries}

From now on, $\FF$ denotes a fixed field of characteristic $p$, where we allow $p=0$, and we assume that $\FF$ contains a primitive $e$-th root of unity if $e \ne p$.
Let $q \in \FF$ be such that $q = 1$ if $e = p$, and $q$ is a primitive $e$-th root of unity otherwise.
Fix $\br = (r_1,\dotsc, r_{\ell}) \in \Z^{\ell}$, and let $n \in \Z^+$.
The {\em Ariki-Koike algebra} $\HH_n = \HH_{\FF, q, \br}(n)$ is the unital $\FF$-algebra generated by $\{ T_0,\dotsc, T_{n-1} \}$ subject to the following relations:
\begin{alignat*}{2}
(T_0 - q^{r_1})(T_0 - q^{r_2}) \dotsm (T_0 - q^{r_{\ell}}) &= 0; \\
(T_a - q)(T_a+1) &= 0 &\qquad &(a \in [1,\, n-1] ); \\
T_0T_1T_0T_1 &= T_1T_0T_1T_0; && \\
T_aT_{a+1}T_a &= T_{a+1}T_a T_{a+1} &\qquad &(a \in [1,\, n-2]); \\
T_aT_b &= T_bT_a &\qquad &(|a-b| \geq 2).
\end{alignat*}
It is clear from the definition that $\HH_n$ only depends on the orbit $\br^{\EW_{\ell}}$ and not on $\br$.
We adopt the convention that $\HH_0=\FF$, regarded as a unital $\FF$-algebra.

It is well known that $\HH_n$ is a cellular algebra, in the sense of Graham and Lehrer \cite{GL}.
The Specht modules $\Sp^{\bl}$ ($\bl \in \PP^{\ell}(n)$), constructed by Dipper, James and Mathas \cite{DJM}, are known to be cell modules for $\HH_n$.
We note that, unlike $\HH_n$, these Specht modules actually depend on the order of the $r_j$'s.

For $\bl \in \PP^{\ell}$, define its
{\em $\HH$-core}, {\em $\HH$-weight} and {\em $\HH$-hub}, denoted $\core_{\HH}(\bl)$, $\wt_{\HH}(\bl)$ and $\Hub_{\HH}(\bl) = (\hub^{\HH}_0(\bl), \dotsc, \hub^{\HH}_{e-1}(\bl)) $,
by
\begin{align*}
\core_{\HH}(\bl) &= \beta^{-1} (\core_e((\bl;\br)^w)), \\
\wt_{\HH}(\bl) &= \wt_e((\bl;\br)^w), \\
\Hub_{\HH}(\bl) & = \Hub_e((\bl;\br)^w),
\end{align*}
where $w$ is any element of $\EW_{\ell}$ such that $\br^w \in \AAbar$.
By Theorem \ref{T:core-n-weight}, $\core_{\HH}(\bl)$ and $\wt_{\HH}(\bl)$ are well-defined, i.e.\ do not depend on the choice of $w$ such that $\br^w \in \AAbar$, while $\Hub_{\HH}(\bl)$ is well-defined by the definition of $\Hub_e$.

Given a block $B$ of $\HH_n$, we say that an $\ell$-partition $\bl$ {\em lies in $B$} if $\Sp^{\bl}$ lies in $B$.

\begin{thm}[{\cite[Corollary 4.5]{LT}}] \label{T:Naka}
Let $\bl,\bm \in \PP^{\ell}$.  Then $\bl$ and $\bm$ lie in the same block of an Ariki-Koike algebra if and only if $\core_{\HH}(\bl) = \core_{\HH}(\bm)$ and $\wt_{\HH}(\bl) = \wt_{\HH}(\bm)$.
\end{thm}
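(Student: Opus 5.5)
Since Theorem~\ref{T:Naka} is quoted from \cite[Corollary 4.5]{LT}, the plan is to reduce it to the known block classification (the Nakayama conjecture for Ariki-Koike algebras, due to Lyle and Mathas) and then translate the residue data into the language of Uglov's map. That classification says that $\bl$ and $\bm$ lie in the same block of $\HH_n$ if and only if $\res^{\br}_e(\bl)=\res^{\br}_e(\bm)$ as multisets. The residue multiset is invariant under $\EW_{\ell}$. We may therefore choose $w\in\EW_\ell$ with $\bt:=\br^w\in\AAbar$ and compare $(\bl^w;\bt)$ with $(\bm^w;\bt)$, which share the same charge. By definition and Theorem~\ref{T:core-n-weight}, $\core_\HH$ and $\wt_\HH$ are computed from exactly these pairs. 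It thus suffices to show, for a fixed $\bt\in\AAbar$, that $\res^{\bt}_e(\bl)=\res^{\bt}_e(\bm)$ if and only if $\core_e(\bl;\bt)=\core_e(\bm;\bt)$ and $\wt_e(\bl;\bt)=\wt_e(\bm;\bt)$.

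\emph{Step 1 (contents versus hub and size).} Let $c_j(\bl)$ be the number of nodes of $\bl$ of residue $j$. Counting addable minus removable nodes of each residue gives
\[
\hub_j(\bl;\bt)=h_j(\bt)+\sum_{k} a_{jk}\,c_k(\bl),
\]
where $(a_{jk})$ is the affine Cartan matrix of type $A^{(1)}_{e-1}$ and $h_j(\bt)$ depends only on $\bt$. I would verify this node by node: adding a node of residue $k$ changes $\hub_j$ by $a_{jk}$. The kernel of $(a_{jk})$ is spanned by $(1,\dotsc,1)$. Hence the residue multisets coincide if and only if $\Hub_e(\bl;\bt)=\Hub_e(\bm;\bt)$ and $|\bl|=|\bm|$.

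\emph{Step 2 (hub versus core).} Next, $\Hub_e(\bl;\bt)=\Hub_e(\UU(\beta_\bt(\bl)))$, because $\UU$ matches, residue by residue, the pairs $x\in\B^{(i)}$, $x\pm1\notin\B^{(i)}$ (the computation in the proof of Proposition~\ref{P:abacus}(2)). By \eqref{E:hub-core} this equals the hub of $\core_e(\bl;\bt)$. By \cite[Lemma 2.1]{LQT}, the hub determines the differences $\fs(\B_{j+1})-\fs(\B_j)$ of the quotient charges. The total charge $\fs(\UU(\beta_\bt(\bl)))$ depends only on $\bt$. Together these determine the quotient charges, and hence the core. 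So equal hubs are equivalent to equal cores.

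\emph{Step 3 (size versus weight), the main obstacle.} With the core fixed, I must show that $|\bl|-e\,\wt_e(\bl;\bt)$ is a constant depending only on $\bt$ and the core. I would argue through contents. Sliding a bead of $\UU(\beta_\bt(\bl))$ up one row lowers the weight by $1$. Pulled back through $\UU$, such a move changes the $\ell$-partition by adding or removing a full residue set $\{0,\dots,e-1\}$, or more precisely by a multiple of $\delta$ together with changes of charge compensated via \cite[Corollary 3.5]{LQT}. The difficulty is that an upward slide can cross between components, which alters the effective charges. I would control this using the moving vector identity \eqref{E:wt-mv} and the charge formula $s_i=t_i-m_i+m_{i-1}$. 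These let me compute $|\bl|$ as $|\boldsymbol\kappa|$ plus $e|\mv_e(\bl;\bt)|$ plus a correction depending only on $\bt$ and $\bs$. Once this relation holds, equal cores together with equal weights are equivalent to equal cores together with equal sizes, which completes the proof.
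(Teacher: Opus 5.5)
The paper gives no proof of this statement; it imports it from \cite[Corollary 4.5]{LT}. Your route therefore has to stand on its own.

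The reduction to the residue criterion and Steps 1--2 are sound in outline, with one slip in Step 2. $\Hub_e(\bl;\bt)$ and $\Hub_e(\UU(\beta_{\bt}(\bl)))$ agree only in residues $j\neq 0$. Since $\sum_j\hub_j$ is $-\ell$ for an $\ell$-partition but $-1$ for a single $\beta$-set, they differ by $\ell-1$ in residue $0$. This is harmless because the difference is a constant.

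The genuine gap is Step 3, and its target is false: $|\bl|-e\,\wt_e(\bl;\bt)$ is not determined by $\bt$ and the core. Take $e=\ell=2$ and $\bt=(0,0)\in\AAbar$.
\begin{itemize}
\item $\UU(\beta_{\bt}(\EP))=\ZZ_{<0}$.
\item For $\bl=((2),\varnothing)$, $\UU(\beta_{\bt}(\bl))=\{3\}\cup\ZZ_{\le -2}=\beta_0((4))$.
\end{itemize}
Both have $2$-core $\ZZ_{<0}$, with weights $0$ and $2$ and sizes $0$ and $2$. So your quantity takes the values $0$ and $-2$. Your formula $|\bl|=|\boldsymbol\kappa|+e|\mv_e(\bl;\bt)|+C(\bt,\bs)$ fails on the same pair: here $\boldsymbol\kappa=\EP$, $\bs=\bt$, and the moving vectors are $(0,0)$ and $(1,1)$.

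The mechanism is also wrong. A one-row upward slide on the abacus of $\UU(\beta_{\bt}(\bl))$ sends $\uu_i(y)$ to $\uu_i(y)-e=\uu_{i+1}(y)$ (or to $\uu_1(y-e)$ if $i=\ell$). So it always transfers a bead to the neighbouring component and changes $\bt$; it does not remove a full residue set from $\bl$ at fixed charge. Removing an $e$-rim hook from $\lambda^{(i)}$ instead sends $\uu_i(y)$ to $\uu_i(y-e)=\uu_i(y)-\ell e$, which is an $\ell$-row slide. Hence one copy of $\delta$ in the residue content costs $\ell$ units of weight, not one.

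What is true, for every $\bt$, is that $\ell|\bl|-e\,\wt_e(\bl;\bt)$ depends only on $\bt$ and the core. (So $\AAbar$ matters only for the well-definedness of $\core_{\HH}$ and $\wt_{\HH}$.) With this in place your argument goes through.

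To prove it, set $\B^{(i)}=\beta_{t_i}(\lambda^{(i)})$, $S_{ij}=\fs(\B^{(i)}_j)$ and $W=\sum_i\wt_e(\lambda^{(i)})$. Runner $j-1$ of $\UU(\beta_{\bt}(\bl))$, viewed as the $\beta$-set $\C_j$, has $\ell$-quotient $(\B^{(\ell)}_j,\dotsc,\B^{(1)}_j)$. Hence the matrix $(S_{ij})$ has row sums $t_i$ and column sums $u_j=\fs(\C_j)$, which are fixed by the core via \eqref{E:bu}. Moreover
\[
|\bl|=\sum_i\bigl|\beta^{-1}(\core_e(\B^{(i)}))\bigr|+eW,\qquad \wt_e(\bl;\bt)=\sum_j\bigl|\beta^{-1}(\core_\ell(\C_j))\bigr|+\ell W.
\]
An $m$-core with quotient charges $(s_1,\dotsc,s_m)$ has size $\sum_k\bigl(m\binom{s_k}{2}+(k-1)s_k\bigr)-\binom{s_1+\dotsb+s_m}{2}$, where $\binom{s}{2}=s(s-1)/2$. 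Substituting, the terms $e\ell\sum_{i,j}\binom{S_{ij}}{2}$ cancel and
\[
\ell|\bl|-e\,\wt_e(\bl;\bt)=\ell\sum_j(j-1)u_j+e\sum_j\binom{u_j}{2}-e\sum_i(\ell-i)t_i-\ell\sum_i\binom{t_i}{2}.
\]
Hence, once the cores agree, equal weights are equivalent to equal sizes, which is exactly what Step 3 needed.
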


By Theorem \ref{T:Naka},
we may unambiguously define the core and weight of a block $B$, denoted $\core(B)$ and $\wt(B)$, as the common $\HH$-core and the common $\HH$-weight of the $\ell$-partitions lying in $B$. Moreover, these two parameters completely determine the block.

By \eqref{E:hub-core}, $\HH$-hub is another block invariant of the Ariki-Koike algebras, and we write $\Hub(B) = (\hub_0(B),\dotsc,\hub_{e-1}(B))$ for the common $\HH$-hub of the $\ell$-partitions lying in $B$.

A more important block invariant is the moving vector:

\begin{lem}[{\cite[Lemma 3.10(2)]{LQT}}] \label{L:mv-block invariant}
If $\bl, \bm$ lie in the same block $B$ of $\HH_n$ and $\br' \in \br^{\EW_{\ell}} \cap \AAbar$, then $\mv_e((\bl;\br)^{w}) = \mv_e((\bm;\br)^{w'})$ for all $w,w'\in \EW_{\ell}$ such that $\br^w =\br^{w'} = \br'$.
\end{lem}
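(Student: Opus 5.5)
The plan is to show that the moving vector of a pair $(\bnu;\bt)$ is completely determined by three pieces of data: the charge $\bt$, the $\beta$-set $\core_e(\bnu;\bt)$, and the weight $\wt_e(\bnu;\bt)$. Once this is established, we check that $(\bl;\br)^w$ and $(\bm;\br)^{w'}$ have the same three pieces of data, and the lemma follows.

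\emph{Step 1: determination.} Let $(\bnu;\bt)\in\PP^{\ell}\times\ZZ^{\ell}$ with $\mv_e(\bnu;\bt)=(m_1,\dotsc,m_\ell)$.
\begin{itemize}
\item Since $\quot_e\circ\UU$ is a bijection $\BB^\ell\to\BB^e$, the $\beta$-set $\core_e(\bnu;\bt)$ determines a unique $(\boldsymbol{\kappa};\bs)$ with $\UU(\beta_{\bs}(\boldsymbol{\kappa}))=\core_e(\bnu;\bt)$.
\item By \cite[Corollary 3.5]{LQT}, as already used in the proof of Lemma \ref{L:-}, we have $s_i=t_i-m_i+m_{i-1}$ for all $i$, where $m_0=m_\ell$.
\item Hence the differences $m_i-m_{i-1}=t_i-s_i$ are determined by $\bt$ and the core. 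This determines $(m_1,\dotsc,m_\ell)$ up to adding a constant multiple $c\bone$.
\item Adding $c\bone$ changes $|\mv_e(\bnu;\bt)|$ by $\ell c$. By \eqref{E:wt-mv}, $|\mv_e(\bnu;\bt)|=\wt_e(\bnu;\bt)$, so the weight pins down $c$.
\end{itemize}
Thus $\mv_e(\bnu;\bt)$ is a function of $(\bt,\core_e(\bnu;\bt),\wt_e(\bnu;\bt))$.

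\emph{Step 2: the two pairs share this data.} Write $(\bl;\br)^w=(\bl^{\overline w};\br')$ and $(\bm;\br)^{w'}=(\bm^{\overline{w'}};\br')$, so both pairs have the same charge $\br'$.
\begin{itemize}
\item \emph{Weights.} Since $\br'\in\AAbar$, the definition of $\HH$-weight gives $\wt_e((\bl;\br)^w)=\wt_{\HH}(\bl)$ and $\wt_e((\bm;\br)^{w'})=\wt_{\HH}(\bm)$; this is well defined by Theorem \ref{T:core-n-weight}. These are equal by Theorem \ref{T:Naka}, because $\bl$ and $\bm$ lie in the same block.
\item \emph{Cores as partitions.} Likewise $\beta^{-1}(\core_e((\bl;\br)^w))=\core_{\HH}(\bl)=\core_{\HH}(\bm)=\beta^{-1}(\core_e((\bm;\br)^{w'}))$.
\end{itemize}

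\emph{Step 3: upgrading to equality of $\beta$-sets.} Equality of the cores as partitions is weaker than what Step 1 needs, which is equality of the cores as $\beta$-sets. It suffices to show the two core $\beta$-sets have the same charge. The charge is preserved by taking $e$-cores, and I expect $\fs(\UU(\B^{(1)},\dotsc,\B^{(\ell)}))=\sum_i\fs(\B^{(i)})$. This would follow by a routine count from the formula $\uu_i(x)=(x-\bar x)\ell+(\ell-i)e+\bar x$. For instance, one can check it first for the $\beta$-sets of empty partitions and then note that moving a single bead preserves both sides. Granting this, both core $\beta$-sets have charge $|\br'|$, so they coincide.

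This charge bookkeeping is the only step needing real verification, and I expect it to be the main, though modest, obstacle. Once it is done, Step 1 applied with $\bt=\br'$ gives $\mv_e((\bl;\br)^w)=\mv_e((\bm;\br)^{w'})$.
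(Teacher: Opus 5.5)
Your proposal is correct. The paper does not prove this lemma itself; it is imported from \cite[Lemma 3.10(2)]{LQT}, so there is no in-paper argument to compare against. What you give is a self-contained derivation from results the paper already quotes elsewhere: the identity $s_i=t_i-m_i+m_{i-1}$ from \cite[Corollary 3.5]{LQT} (as used in the proof of Lemma \ref{L:-}), Theorems \ref{T:core-n-weight} and \ref{T:Naka}, and \eqref{E:wt-mv}.

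Your route also yields a slightly stronger and more transparent statement. For any charge $\bt$, $\mv_e(\bnu;\bt)$ is a function of $\bt$, the $\beta$-set $\core_e(\bnu;\bt)$ and $\wt_e(\bnu;\bt)$. The hypothesis $\br'\in\AAbar$ is needed only to identify $\wt_e((\bl;\br)^w)$ and $\beta^{-1}(\core_e((\bl;\br)^w))$ with the block invariants $\wt_{\HH}(\bl)$ and $\core_{\HH}(\bl)$.

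Lemma \ref{L:-} invokes the corollary under $t_1\le\dotsb\le t_\ell$, which your $\br'\in\AAbar$ satisfies anyway. In fact the identity holds for every $\bt$. One checks this using the observation that runner $\ell-i$ of the $\ell$-abacus of the $(b+1)$-th component of $\quot_e(\UU(\B^{(1)},\dotsc,\B^{(\ell)}))$ is exactly runner $b$ of the $e$-abacus of $\B^{(i)}$.

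Your Step 3 is genuinely necessary, since Theorem \ref{T:core-n-weight}(1) only compares cores as partitions, and the charge identity does hold. The cleanest proof runs as follows. First, $\UU(\ZZ_{<0},\dotsc,\ZZ_{<0})=\ZZ_{<0}$. Second, the maps $\uu_i$ are injective with pairwise disjoint images, so adding or removing one element of a single $\B^{(i)}$ adds or removes exactly one element of $\UU(\B^{(1)},\dotsc,\B^{(\ell)})$. This changes both $\fs(\UU(\B^{(1)},\dotsc,\B^{(\ell)}))$ and $\sum_i\fs(\B^{(i)})$ by the same $\pm1$.

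The paper uses this fact implicitly too. Right after defining $\br^{w*}_B$, it identifies $\quot_e(\core_e((\bl;\br)^w))$ with $\quot_e(\beta_{|\br^w|}(\core(B)))$.
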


We shall thus write $\mv^{\br'}(B)$ for the common moving vector $\mv_e((\bl;\br)^w)$ for $\bl \in \PP^{\ell}$ lying in $B$ and $w \in \EW_{\ell}$ with $\br^w = \br'$.

The moving vector of $B$ is actually well-defined up to cyclic permutation:

\begin{lem}[{\cite[Corollary 3.12]{LQT}}] \label{L:mv-cyclic permutation}
Let $B$ be a block of $\HH_n$, and let $w \in \EW_{\ell}$ such that $\br^{w} \in \AAbar$.  Then 
$$
\{ \mv^{\br'}(B) \mid \br' \in \br^{\EW^{\ell}} \cap \AAbar \} = \{ (\mv^{\br^{w}}(B))^{\rr_{\ell}^i} \mid i \in \ZZ/\ell\ZZ \},
$$
where $\rr_{\ell} = (1,2,\dotsc, \ell) \in \sym{\ell}$.
\end{lem}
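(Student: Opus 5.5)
The approach is to show two things: the elements of $\br^{\EW_{\ell}} \cap \AAbar$ form a single orbit under the cyclic group generated by $\rr_\ell\Be_\ell$, up to elements of $\EW_\ell$ fixing the charge; and acting by $\rr_\ell\Be_\ell$ cyclically permutes the moving vector. Lemma \ref{L:mv-block invariant} already says that $\mv^{\br'}(B)$ does not depend on the choice of $w$ with $\br^w=\br'$. So only the charge $\br'$ matters, and stabilisers of $\br'$ need no separate treatment.

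\emph{Step 1 (orbit structure).} Fix $\br'=\br^w\in\AAbar$. First check that $\br'^{\rr_\ell\Be_\ell}=(r'_2,\dotsc,r'_\ell,r'_1+e)$ again lies in $\AAbar$; this holds because $r'_\ell\le r'_1+e$ and $r'_2 \le r'_1 + e\le r'_2+e$. Conversely, let $\br''=\br'^{\sigma\bv}\in\AAbar$. Writing each entry as $r'_{\sigma(i)}+ev_i$, the constraint that the entries of $\br''$ are weakly increasing and lie in a window of length $e$ should force $\bv$ to take only two consecutive values, say $a$ and $a+1$. The entries receiving $a+1$ must come from the smallest $r'_j$, except that ties allow ambiguity. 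I expect this to yield $\br''=\br'^{(\rr_\ell\Be_\ell)^k}$ for some $k\in\ZZ$, using Lemma \ref{L:AA} to identify $(\rr_\ell\Be_\ell)^{a\ell+b}=\rr_\ell^b\bv_\ell(a\ell+b)$. This combinatorial bookkeeping, with ties among equal $r'_j$ and the boundary case $r'_\ell=r'_1+e$, is the main obstacle. I would handle it by sorting the values $r'_j \bmod e$ together with their "levels" and comparing.

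\emph{Step 2 (effect on moving vectors).} Let $w_0=\rr_\ell\Be_\ell$ and $(\bl;\bt)$ be arbitrary. Compute $\UU(\beta_{\bt^{w_0}}(\bl^{w_0}))$. Its $i$-th component for $i<\ell$ is the old $(i+1)$-st, and $\uu_i(x)=\uu_{i+1}(x)+e$. Its $\ell$-th component is the old first component shifted by $e$, and $\uu_\ell(x+e)=\uu_1(x)+e$. Hence
$$\UU(\beta_{\bt^{w_0}}(\bl^{w_0}))=\UU(\beta_{\bt}(\bl))^{+e}.$$
By \eqref{E:quot} with $a=1,b=0$, every component of the $e$-quotient is then shifted by $+1$. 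So $\bij_{\ell,e}$ sends $(\bl;\bt)^{w_0}$ to $(\bm;\bu+\bone)$, where $(\bm;\bu)=\bij_{\ell,e}(\bl;\bt)$. Raising all charges by $1$ raises every $(\ell,\bu)$-residue by $1$. Therefore the count of nodes of residue $\ell-i$ becomes the old count of residue $\ell-i-1$, so $\mv_e((\bl;\bt)^{w_0})=\mv_e(\bl;\bt)^{\rr_\ell^{\pm1}}$ (I will fix the sign carefully).

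\emph{Step 3 (conclusion).} Combine Steps 1 and 2 with Lemma \ref{L:mv-block invariant}. Every $\br''\in\br^{\EW_\ell}\cap\AAbar$ is $\br^{w w_0^k}$, and by Step 2 its moving vector is $\mv^{\br^w}(B)^{\rr_\ell^{\pm k}}$, giving one inclusion. Conversely, each $\br^{ww_0^k}$ lies in $\AAbar$ by Step 1, so all $\ell$ cyclic rotations occur, giving the reverse inclusion.
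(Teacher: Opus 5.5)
The paper gives no proof of this statement; it quotes it from \cite[Corollary 3.12]{LQT}, so there is no in-paper argument to compare with. Judged on its own, your three-step plan is sound in outline.

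\textbf{Steps 2 and 3.} Step 2 is correct as written. The identities $\uu_i(x)=\uu_{i+1}(x)+e$ and $\uu_\ell(x+e)=\uu_1(x)+e$ give $\UU(\beta_{\bt^{\rr_\ell\Be_\ell}}(\bl^{\rr_\ell\Be_\ell}))=\UU(\beta_{\bt}(\bl))^{+e}$, hence $\bij_{\ell,e}((\bl;\bt)^{\rr_\ell\Be_\ell})=(\bm;\bu+\bone)$. The sign is $+1$: a node of old $(\ell,\bu)$-residue $r$ acquires residue $r+1$, so the new $i$-th entry counts old residue $\ell-(i+1)$, and
\[
\mv_e((\bl;\bt)^{\rr_\ell\Be_\ell})=(m_2,\dotsc,m_\ell,m_1)=\mv_e(\bl;\bt)^{\rr_\ell}.
\]
Step 3 then goes through as you describe, with Lemma~\ref{L:mv-block invariant} absorbing the choice of $w$. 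Rename $\rr_\ell\Be_\ell$, though, since the paper already uses $w_0$ for a different element.

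\textbf{The gap is Step 1.} You leave it as an expectation, and the mechanism you propose is false as stated. It is not true that $\bv$ takes only two consecutive values whenever $\br'^{\sigma\bv}=\br''$. For $\ell=2$, $\br'=\br''=(0,e)$ and $\sigma=(1,2)$, you are forced to take $\bv=(-1,1)$. Some choice of $\sigma$ does work, but proving that is exactly the tie-breaking bookkeeping you were worried about.

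\textbf{A fix that avoids $\sigma$ and $\bv$.}
\begin{enumerate}
\item Extend $\br'\in\AAbar$ to a sequence $(c_i)_{i\in\ZZ}$ by $c_i=r'_i$ for $i\in[1,\,\ell]$ and $c_{i+\ell}=c_i+e$. Then $\br'\in\AAbar$ says exactly that $(c_i)$ is weakly increasing.
\item Since $(c_{k+1},\dotsc,c_{k+\ell})^{\rr_\ell\Be_\ell}=(c_{k+2},\dotsc,c_{k+\ell+1})$, you get $\br'^{(\rr_\ell\Be_\ell)^k}=(c_{k+1},\dotsc,c_{k+\ell})\in\AAbar$ for every $k\in\ZZ$. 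This also covers your first check.
\item If $\br''\in\br'^{\EW_\ell}\cap\AAbar$, build $(d_i)$ from $\br''$ in the same way. Since $r''_i\equiv_e r'_{\sigma(i)}$, both $(c_i)$ and $(d_i)$ are weakly increasing enumerations of the same multiset $\biguplus_{j=1}^{\ell}(r'_j+e\ZZ)$.
\item That multiset has every value of multiplicity at most $\ell$ and is unbounded in both directions. Such an enumeration is unique up to a shift of index, so $d_i=c_{i+k}$ for some $k$, i.e.\ $\br''=\br'^{(\rr_\ell\Be_\ell)^k}$.
\end{enumerate}
Ties and the boundary case $r'_\ell=r'_1+e$ never enter, because only the tuple of values matters.
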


Recall that we have a left action $\DDot{\br}$ of $\AW_e$ on $\PP^{\ell}$.
This induces a left action $\DDot{}$ of $\AW_e$ on the set of blocks of Ariki-Koike algebras (with common $\ell$-charge $\br$) such that if $B$ is a block of $\HH_n$, then $\cs_j \DDot{} B$ is the block of $\HH_{n-\hub_j(B)}$ with
$$
\core(\cs_j \DDot{} B ) = \beta^{-1}(\cs_j \DDot{\ell} \beta_{|\br|}(\core(B))) \quad \text{ and } \quad \wt(\cs_j \DDot{} B) = \wt(B)
$$
(see, for example, \cite[Proposition 4.8]{LT}).
The moving vectors classify the orbits of this action:

\begin{prop}[{\cite[Proposition 3.13]{LQT}}] \label{P:moving-vector-Weyl-orbit}
Let $B$ and $C$ be two blocks of Ariki-Koike algebras with the same $\ell$-charge $\br$.
Let $\br' \in \br^{\EW_{\ell}} \cap \AAbar$.
Then $\mv^{\br'}(B) = \mv^{\br'}(C)$ if and only if $B = w \DDot{} C$ for some $w \in \AW_e$.
\end{prop}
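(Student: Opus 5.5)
The plan is to transport everything through Uglov's map $\UU$ and the $e$-quotient, where the $\AW_e$-action on blocks becomes the action $\CDot{\ell}$ on $e$-tuples. On that side, the moving vector is visibly an invariant of the $\CDot{\ell}$-action.

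Fix $\br' \in \br^{\EW_{\ell}}\cap\AAbar$. For a block $B$, choose $\bl$ lying in $B$ and $w \in \EW_\ell$ with $\br^w = \br'$, and put $\bij_{\ell,e}((\bl;\br)^w) = (\bm;\bu)$.

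\emph{Step 1: compatibility.} I first check that
$$\UU(\cs_j \DDot{1} (\B^{(1)},\dotsc,\B^{(\ell)})) = \cs_j \DDot{\ell} \UU(\B^{(1)},\dotsc,\B^{(\ell)}).$$
For $j\neq 0$ this holds because $\uu_i$ preserves residues modulo $e$ and sends $x-1,x$ (with $x \equiv_e j$) to adjacent positions. For $j=0$, $\uu_i$ sends the step from $ae+e-1$ to $(a+1)e$ to a jump matching $\cs_0 = (1,e)(\Be_e-\Be_1)$ acting via $\DDot{\ell}$, since $\Be_e$ acts by a shift of $\ell e$. Combining this with $\beta_{\bt}(\cs_j\DDot{\bt}\bl) = \cs_j\DDot{1}\beta_{\bt}(\bl)$ and \eqref{E:CDot-tuple-beta-sets}, the quotient of $\cs_j \DDot{\br'} \bl^{w}$ is $\cs_j \CDot{\ell}\beta_{\bu}(\bm)$. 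Moreover $\cs_j\DDot{\br'}\bl^w$ lies in $\cs_j \DDot{} B$ (as in \cite[Proposition 4.8]{LT}).

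\emph{Step 2: invariance, the ``only if'' direction.} The action $\CDot{\ell}$ permutes the components of $(\bm;\bu)$ and replaces some $\C_k$ by $\C_k^{+\ell}$. Shifting by $\ell$ does not change the partition $\mu^{(k)}$, and it changes $u_k$ by $\ell$, so the $(\ell,u_k)$-residues are unchanged. Permuting components does not change the total count of nodes of each $\ell$-residue. Hence $\mv$ is preserved, and by induction on the length of $w\in\AW_e$ we get $\mv^{\br'}(w\DDot{} C)=\mv^{\br'}(C)$.

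\emph{Step 3: the ``if'' direction.} Suppose $\mv^{\br'}(B)=\mv^{\br'}(C)=(m_1,\dotsc,m_\ell)$. By \eqref{E:wt-mv} the two blocks have equal weight. By \cite[Corollary 3.5]{LQT}, both cores are of the form $\UU(\beta_{\bs}(\boldsymbol\kappa))$ with the \emph{same} charge $\bs$, where $s_i = r'_i - m_i + m_{i-1}$. Let $\beta_{\bu}(\EP)$ and $\beta_{\bu'}(\EP)$ be their $e$-quotients. The $\AW_e$-orbit of $\bu$ under $\CDot{\ell}$ consists of all permutations of $\bu + \ell\bv$ with $|\bv|=0$, by \eqref{E:affineWeylgroup} and \eqref{E:CDot}. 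So it suffices to show that $|\bu|=|\bu'|$ and that the multisets $\{u_k \bmod \ell\}$ and $\{u'_k \bmod \ell\}$ coincide. Once such a $w$ with $w\CDot{\ell}\bu=\bu'$ exists, the core of $w\DDot{}C$ equals that of $B$ by \eqref{E:bu}, the weights agree by Step 2, and Theorem~\ref{T:Naka} gives $B=w\DDot{}C$.

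\emph{Main obstacle.} The step I expect to be hardest is showing that $|\bu|$ and the residue multiset of $\bu$ modulo $\ell$ are determined by $\bs$ alone. I would count beads. The beads of $\UU(\beta_{\bs}(\boldsymbol\kappa))$ coming from component $i$ sit in rows $\equiv_\ell -i$. Meanwhile, $\C_k$ records runner $k-1$, and its charge $u_k$ is read off from the rows of that runner. Comparing charge counts row class by row class, in the spirit of the bookkeeping in Proposition~\ref{P:abacus}(2), should express $\sum_k \bbone_{u_k\equiv_\ell c}$ and $|\bu|$ in terms of $\bs$. The invariance of $\fs$ under $\UU$ gives $|\bu|=\fs(\UU(\beta_{\bs}(\boldsymbol\kappa)))$, and this depends only on $\bs$.
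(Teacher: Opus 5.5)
Your outline is correct. The paper gives no proof of this statement; it imports it from \cite[Proposition 3.13]{LQT}. Your two halves nevertheless rest on the same tools the paper uses nearby.
\begin{itemize}
\item The invariance half is the content of Lemma~\ref{L:cs_j}(1): the $e$-quotient moves by $\cs_j \CDot{\ell}$, which only permutes components and shifts charges by $\ell$, so no $\ell$-residue of a node changes.
\item The converse starts, as in the proof of Lemma~\ref{L:-}, from \cite[Corollary 3.5]{LQT}, which gives both cores the same charge $\bs$.
\end{itemize}
There are two cosmetic slips. The invariance half is the ``if'' direction, not the ``only if''. In Step 3 you want $w \CDot{\ell} \bu' = \bu$, or else you should conclude $C = w \DDot{} B$.

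The step you flag as the main obstacle does go through. You should say explicitly that it depends on $\UU(\beta_{\bs}(\boldsymbol\kappa))$ being an $e$-core. For an arbitrary $\boldsymbol\kappa$, the residues modulo $\ell$ of the quotient charges of $\UU(\beta_{\bs}(\boldsymbol\kappa))$ are not determined by $\bs$.

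Here is the computation. Write $\quot_e(\beta_{s_i}(\kappa^{(i)})) = (\B^{(i)}_1,\dotsc,\B^{(i)}_e)$.
\begin{itemize}
\item From the definition of $\uu_i$, the $k$-th component of $\quot_e(\UU(\beta_{\bs}(\boldsymbol\kappa)))$ is $\bigcup_{i=1}^{\ell}\bigl(\ell\,\B^{(i)}_k + (\ell-i)\bigr)$. By the core property this component equals $\ZZ_{<u_k}$.
\item Write $u_k = q_k\ell + \bar u_k$ with $0 \le \bar u_k \le \ell-1$. Then the previous point forces $\fs(\B^{(i)}_k) = q_k + \bbone_{\bar u_k > \ell-i}$.
\item Summing over $k$ gives $s_1 = \sum_k q_k$ and $s_i - s_{i-1} = |\{k \mid \bar u_k = \ell-i+1\}|$ for $i \in [2,\,\ell]$.
\item Hence the multiset of the $\bar u_k$ is determined by $\bs$. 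In particular, the number of $k$ with $\bar u_k = 0$ is $e - s_\ell + s_1$.
\item Separately, $|\bu| = |\bs|$. This holds because each $\uu_i$ maps $\ZZ_{\geq 0}$ into $\ZZ_{\geq 0}$ and $\ZZ_{<0}$ into $\ZZ_{<0}$, and the images of the $\uu_i$ partition $\ZZ$, so $\fs$ is additive under $\UU$.
\end{itemize}
With this computation inserted, your reduction to the $\CDot{\ell}$-orbit of $\bu$ and then to Theorem~\ref{T:Naka} completes the proof.
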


\subsection{Scopes equivalence}

Let $B$ and $C$ be blocks of Ariki-Koike algebras.
For $j \in \ZZ/e\ZZ$,
we write \index{$B \xrightarrow{j} C$} $B \xrightarrow{j} C$ if all $\ell$-partitions lying in $B$ have no addable node with $(e,\br)$-residue $j$ and $C = \cs_j \DDot{} B$.
We further say that $B$ and $C$ are {\em Scopes equivalent} if there exists a sequence $B_0,\dotsc, B_k$ of blocks such that 
$B_0= B$, $B_k = C$ and for each $a \in [1,\, k]$, $B_{a-1} \xrightarrow{j_a} B_a$ or $B_{a} \xrightarrow{j_a} B_{a-1}$ for some $j_a \in \ZZ/e\ZZ$.
Clearly, Scopes equivalence is an equivalence relation on the blocks of Ariki-Koike algebras where each equivalence class is a subset of an $\AW_e$-orbit.


The importance of Scopes equivalence is due to the following theorem.

\begin{thm}[see {\cite[Theorem 6.4]{CR}} and {\cite[Lemma 3.1]{Webster}}] \label{T:CR}
Let $B$ be a block of $\HH_n$ and $j \in \Z/e\Z$.  
If $B \xrightarrow{j} \cs_j \DDot{} B$, then $B$ and $\cs_j \DDot{} B$ are Morita equivalent. Under this equivalence, the Specht module $S^{\bl}$ lying in $B$ corresponds to $S^{\cs_j \DDot{\br} \bl}$.
\end{thm}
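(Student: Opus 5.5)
The approach is to use the $\widehat{\mathfrak{sl}}_e$-categorification on $\bigoplus_n \HH_n\text{-mod}$, with $E_j$ and $F_j$ the $j$-restriction and $j$-induction functors, together with Chuang and Rouquier's result that divided powers at the end of an $\mathfrak{sl}_2$-string are equivalences. The combinatorial hypothesis will be turned into a statement about weights in the $j$-th $\mathfrak{sl}_2$-string, and the Specht modules will then be tracked through the resulting functor.

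\emph{Reduction to an extremal weight.} Put $h := \hub_j(B)$ and $C := \cs_j \DDot{} B$, a block of $\HH_{n-h}$. Since no $\ell$-partition lying in $B$ has an addable node of residue $j$, every $\bl$ in $B$ has exactly $h$ removable nodes of residue $j$ and no addable ones. Hence $F_j$ kills every Specht module in $B$. The Grothendieck group of $B$ is spanned by Specht classes, and the simples are quotients of Specht modules, so $F_j B = 0$. In $\mathfrak{sl}_2$-language, the weight space corresponding to $B$ is therefore a highest-weight end of its $j$-string, of weight $h \geq 0$. Its reflection is the weight space of $C$, of weight $-h$, which is the other end of the string.

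\emph{Equivalence.} By \cite[Theorem 6.4]{CR}, the Rickard complex $\Theta_j$ induces a derived equivalence between the two weight spaces. At an extremal weight this complex collapses to the single term $E_j^{(h)}$ (equivalently $F_j^{(h)}$ in the other direction). Therefore $E_j^{(h)} \colon B\text{-mod} \to C\text{-mod}$ is an exact equivalence, whose inverse is $F_j^{(h)}$. Since it is exact with an exact inverse, it is in fact a Morita equivalence. I would check carefully that the weights really are extremal, meaning that $E_j^{h+1}$ kills $B$ and $F_j$ kills $B$. The first follows from the fact that each $\bl$ has only $h$ removable $j$-nodes, and the second was shown in the previous step.

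\emph{Specht modules.} The remaining claim is that $E_j^{(h)} S^{\bl} \cong S^{\cs_j \DDot{\br} \bl}$. For this I would use \cite[Lemma 3.1]{Webster}, or the classical branching rule: $E_j S^{\bl}$ has a Specht filtration whose subquotients are $S^{\bnu}$, where $\bnu$ runs over the results of removing one removable $j$-node from $\bl$. Iterating, $E_j^h S^{\bl}$ has a filtration with $h!$ factors, all isomorphic to $S^{\bnu}$ with $\bnu$ obtained by removing all $h$ removable $j$-nodes, that is, $\bnu = \cs_j \DDot{\br}\bl$. Dividing by $h!$ via the divided power gives a module with a single Specht factor, namely $S^{\cs_j\DDot{\br}\bl}$.

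I expect the main obstacle to be this last identification. One must make sure the divided power keeps exactly one copy of the Specht module, up to isomorphism, rather than just its class in the Grothendieck group. This requires the graded or cellular refinement in Webster's lemma, so there I would appeal directly to \cite[Lemma 3.1]{Webster}.
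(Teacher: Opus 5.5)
Your proposal is correct and follows the route the paper intends. The paper gives no argument of its own: it cites \cite[Theorem 6.4]{CR} for the Morita equivalence, where the Rickard complex collapses to the single term $E_j^{(h)}$ once $F_j$ kills $B$ (which is exactly what the absence of addable $j$-nodes gives), and \cite[Lemma 3.1]{Webster} for the Specht correspondence. You rightly note that the Grothendieck-group identity $[E_j^{(h)}S^{\bl}]=[S^{\cs_j\DDot{\br}\bl}]$ alone does not give an isomorphism, and that Webster's lemma is what supplies it; note also that $F_jB=0$ is all the collapse needs, since $E_j^{h+1}B=0$ then follows automatically.
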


For a block $B$ of $\HH_n$ and $w \in \EW_{\ell}$ with $\br^w \in \AAbar$, let $\br^{w*}_B \in \ZZ^e$ be such that
\begin{equation*}
\beta_{\br^{w*}_B}(\EP) = \quot_e(\beta_{|\br^w|}(\core(B))). \label{E:multicharge-of-dual-block}
\end{equation*}
Then for each $\bl \in\PP^{\ell}$ lying in $B$, we have
$$
\bij_{\ell,e}((\bl;\br)^w) = (\bl^*,\br^{w*}_B)
$$
for some unique $\bl^* \in\PP^e$ by \eqref{E:bu}.

For the remainder of this subsection, we shall be working towards providing a necessary and sufficient condition for two blocks of Ariki-Koike algebras (with common $\ell$-charge $\br$) to be Scopes equivalent.
Two Scopes equivalent blocks must lie in the same $\AW_e$-orbit, so by Proposition \ref{P:moving-vector-Weyl-orbit} they have the same moving vector.
By Lemma \ref{L:mv-cyclic permutation}, we may assume that the common moving vector of the blocks in this $\AW_e$-orbit has the form $(m_1,\dotsc, m_{\ell})$ with $m_{\ell} = \min \{ m_i \mid i \in [1,\,\ell] \}$.
In other words, we fix a $w_0 \in \EW_{\ell}$ with $\br^{w_0} \in \AAbar$ such that for any block $B$ in the $\AW_e$-orbit, we have $\mv^{\br^{w_0}}(B) = (m_1,\dotsc, m_{\ell})$ with $m_{\ell} = \min \{ m_i \mid i \in [1,\,\ell] \}$.
We can then simplify our notations and write $\mv(B)$ for $\mv^{\br^{w_0}}(B)$, and $\br^*_B$ for $\br^{w_0*}_B$.

With this choice, we make the following definitions, most of which have already been introduced in \cite[Section 4]{LQT}:

\begin{Def} \label{D:}  \hfill
\begin{enumerate}
\item For each $\by = (y_1,\dotsc, y_e) \in \ZZ^e$, define $\sigma_{\by} \in \sym{e}$ by: for each $j \in [1,\,e-1]$,
$$ y_{\sigma_{\by}(j)} \leq y_{\sigma_{\by}(j+1)},$$
 with equality only if $\sigma_{\by}(j) < \sigma_{\by}(j+1)$.
 (In other words, $\sigma_{\by}$ is the element of $\sym{e}$ with minimal length such that $\by^{\sigma_{\by}}$ is weakly increasing.)

\item For $I \subseteq \ZZ_{\geq 0}$ with $0 \in I$ and $b \in \ZZ_{\geq 0}$, define
        $$\Ht_I(b) := \max\{ i \in I \mid i \leq b \}.$$     

\item Let $B$ be a block of $\HH_n$, with $\mv(B) = (m_1,\dotsc, m_{\ell})$.
\begin{enumerate}
\item
Define $(x^B_1,\dotsc, x^B_e), \by^B = (y^B_1,\dotsc, y^B_e) \in \ZZ^e$ and $\bz^B = (z^B_1,\dotsc, z^B_e) \in (\ZZ/\ell\ZZ)^e$ by
    $$
    \br^*_B = (x^B_1,\dotsc, x^B_e) = \by^B \ell + \bz^B.
    $$
    For convenience, we further let $x^B_0 := x^B_e$, $y^B_0 := y^B_e$ and $z^B_0 := z^B_e$.\medskip

\item Write $\sigma_B$ for $\sigma_{\by^B}$
    .

\item
Define
\begin{align*}
m_B &:= \min(\mv(B));  \\
\II_B &:= \{ \ell - i \mid m_i = m_B \}.
\end{align*}
(Note that by our choice of $w_0 \in \EW_{\ell}$ we have $m_{\ell} = m_B$ and thus $0 \in \II_B$.)

\end{enumerate}
\end{enumerate}
\end{Def}

Arising from these definitions, we have the following lemma:

\begin{lem} \label{L:cs_j} \hfill
\begin{enumerate}
\item \cite[Lemma 3.15(2), Lemma 4.7]{LQT}
Let $B$ be a block of $\HH_n$.  Let $j \in \ZZ/e\ZZ$ and let $C = \cs_j \DDot{} B$.
Then
\begin{align*}
\br^*_C &= \cs_j \CDot{\ell} \br^*_B; \\
\by^C & = \cs_j \CDot{1} \by^B = (\by^B)^{\overline{\cs_j}} + \delta_{j0}(\Be_1 - \Be_e); \\
\bz^C &= \cs_j \CDot{0} \bz^B = (\bz^B)^{\overline{\cs_j}}.
\end{align*}
Furthermore, if $\bl \in \PP^{\ell}$ lying in $B$ with $\bij_{\ell,e}((\bl;\br) ^{w_0}) = (\bl^*, \br^*_B)$, we have
$$\bij_{\ell,e}((\cs_j \DDot{\br} \bl; \br)^{w_0}) = ((\bl^*)^{\overline{\cs_j}}; \br^*_C).$$

\item \cite[Proposition 4.13(1)]{LQT} Let $\by= (y_1,\dotsc, y_e) \in \ZZ^e$ and $j \in \ZZ/e\ZZ$.
If $y_{j+1} > y_j + \delta_{j0}$ (where $y_0 = y_e$), then $$\sigma_{\cs_j \smash[t]{\CDot{1}} \by} = \overline{\cs_j}\sigma_{\by}.$$

\end{enumerate}
\end{lem}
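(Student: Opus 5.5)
Both parts are quoted from \cite{LQT}. If I were to reprove them, I would derive part (1) from the intertwining identity \eqref{E:CDot-tuple-beta-sets} together with compatibility of Uglov's map $\UU$ with the two actions, and part (2) by a direct rank computation for $\sigma_{\by}$.

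For part (1), I would first show that $\UU$ intertwines $\DDot{1}$ on $\BB^{\ell}$ with $\DDot{\ell}$ on $\BB$. That is, $\UU(w\DDot{1}(\B^{(1)},\dotsc,\B^{(\ell)})) = w\DDot{\ell}\UU(\B^{(1)},\dotsc,\B^{(\ell)})$ for $w \in \AW_e$.
\begin{itemize}
\item Since $\uu_i(x)\equiv_e x$, each $\uu_i$ preserves the runner.
\item For $j \in [1,\,e-1]$ and $x \equiv_e j$, we have $\uu_i(x\pm1)=\uu_i(x)\pm 1$ within the same row, so $\uu_i$ commutes with $\cs_j$.
\item For $x \equiv_e j-1$ we have $\uu_i(x+e)=\uu_i(x)+e\ell$, so $\uu_i$ commutes with $\Be_j$. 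Since $\cs_0=(1,e)(\Be_e-\Be_1)$, this also covers $\cs_0$.
\end{itemize}

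Combining this with $\beta_{\bt}(\cs_j\DDot{\bt}\bl)=\cs_j\DDot{1}\beta_{\bt}(\bl)$ and \eqref{E:CDot-tuple-beta-sets} gives
$$\quot_e(\UU(\beta_{\br^{w_0}}(\cs_j\DDot{\br}\bl))) = \cs_j\CDot{\ell}\quot_e(\UU(\beta_{\br^{w_0}}(\bl))) = \cs_j\CDot{\ell}\beta_{\br^*_B}(\bl^*).$$
(I will also check that applying $w_0$ commutes with $\cs_j\DDot{\br}$, which holds since residues are $\EW_\ell$-invariant.) By definition of $\CDot{\ell}$, the right-hand side is $\beta_{\cs_j\CDot{\ell}\br^*_B}((\bl^*)^{\overline{\cs_j}})$. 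Shifting a $\beta$-set by $\ell$ leaves the partition unchanged, so the multipartition is just permuted.

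Taking $e$-cores, or equivalently applying this to a core multipartition and using \eqref{E:bu}, yields $\br^*_C=\cs_j\CDot{\ell}\br^*_B$. For the $\by$ and $\bz$ statements, I use \eqref{E:CDot}. For $j\ne0$ this is a pure transposition of coordinates. For $j=0$ it gives $\br^*_C=(x_e+\ell,x_2,\dotsc,x_{e-1},x_1-\ell)$. Dividing by $\ell$ with remainder then shows that $\bz$ is merely permuted, while $\by$ is permuted and then shifted by $\Be_1-\Be_e$.

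The one point needing care is that $\core(C)$ is defined through $\beta_{|\br|}$ rather than $\beta_{|\br^{w_0}|}$. However, $|\br^{w_0}|-|\br|\in e\ZZ$, and by \eqref{E:quot} a shift by a multiple of $e$ shifts all quotient charges uniformly. This shift commutes with $\cs_j\CDot{\ell}$, so it causes no trouble.

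For part (2), I would use the rank description $\sigma_{\by}^{-1}(k)=|\{m: y_m<y_k\}|+|\{m<k: y_m=y_k\}|+1$. The claim $\sigma_{\by'}=\overline{\cs_j}\sigma_{\by}$, where $\by'=\cs_j\CDot{1}\by$, says that index $\overline{\cs_j}(k)$ in $\by'$ has the same rank as index $k$ in $\by$. So I need every pairwise comparison, including index tie-breaks, to be preserved under $k\mapsto\overline{\cs_j}(k)$.

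For $j\neq0$, the swapped values $y_j<y_{j+1}$ are distinct, so no tie-break between positions $j$ and $j+1$ is ever used, and relative order with any other index is unchanged.

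For $j=0$, the main (though routine) check concerns $y'_e=y_1-1$ and $y'_1=y_e+1$.
\begin{itemize}
\item For a middle index $k$: $y_1>y_k$ iff $y_1-1\ge y_k$, and in the equality case the tie-break puts index $e$ after $k$, matching $y_1>y_k$. Likewise $y_1=y_k$ forces $y_1-1<y_k$, matching the original tie-break placing $1$ before $k$. The symmetric argument handles $y_e+1$ against $y_k$.
\item For the pair $(1,e)$: the hypothesis $y_1>y_e+1$ gives $y'_1=y_e+1\le y_1-1=y'_e$, and the tie-break $1<e$ is consistent with $y_e<y_1$.
\end{itemize}
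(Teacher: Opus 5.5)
The paper gives no proof of this lemma; both parts are quoted from \cite{LQT}. Your argument is therefore a self-contained substitute rather than a variant of anything in the paper, and it is correct.

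What it buys is that the lemma follows from tools already present in the paper:
\begin{itemize}
\item The identity $\br^*_C=\cs_j\CDot{\ell}\br^*_B$ needs only three things: the definition of $\core(\cs_j\DDot{}B)$, \eqref{E:CDot-tuple-beta-sets}, and your remark that shifting by a multiple of $e$ commutes with $\cs_j\DDot{\ell}$ and $\cs_j\CDot{\ell}$.
\item The split into $\by^C$ and $\bz^C$ is then just $\cs_j\CDot{\ell}(\ell\by+\bz)=\ell(\cs_j\CDot{1}\by)+\cs_j\CDot{0}\bz$, together with uniqueness of remainders.
\item Your intertwining $\uu_i(w\DDot{1}x)=w\DDot{\ell}\uu_i(x)$ is exactly what the statement about $\bl^*$ requires.
\end{itemize}

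There are two small slips to fix:
\begin{itemize}
\item The generator check should read: $\uu_i(x-1)=\uu_i(x)-1$ for $x\equiv_e j$, and $\uu_i(x+1)=\uu_i(x)+1$ for $x\equiv_e j-1$. As you wrote it, $\uu_i(x+1)=\uu_i(x)+1$ for $x\equiv_e j$ fails when $j=e-1$, although that case is never used.
\item In the displayed identity, replace $\cs_j\DDot{\br}\bl$ by $(\cs_j\DDot{\br}\bl)^{w_0}=\cs_j\DDot{\br^{w_0}}\bl^{w_0}$, as your parenthetical remark indicates.
\end{itemize}

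Your rank argument for part (2) is complete. The hypothesis $y_1\geq y_e+2$ is used precisely for the pair $\{1,e\}$.
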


We record the following easy corollary arising from Lemma 3.7(1).

\begin{cor} \label{C:w-z}
Let $B$ be a block of $\HH_n$, and let $C = w \DDot{} B$ where $w \in \AW_e$.
Then
$\bz^C = (\bz^B)^{\overline{w}^{-1}}$.

In particular, $\bz^C$ is a rearrangement of $\bz^B$.
\end{cor}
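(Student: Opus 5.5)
Since $\AW_e$ is generated by $\cs_0,\dotsc,\cs_{e-1}$, I would prove the corollary by induction on the length of a word $w = \cs_{j_k}\dotsm\cs_{j_1}$ representing $w$. It is enough to handle one generator and then compose.

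For the base step, Lemma \ref{L:cs_j}(1) gives $\bz^{\cs_j \DDot{} B} = (\bz^B)^{\overline{\cs_j}}$ for a single generator. Since $\overline{\cs_j}$ is an involution in $\sym{e}$ (a transposition, including $\overline{\cs_0} = (1,e)$), this equals $(\bz^B)^{\overline{\cs_j}^{-1}}$. So the claim holds for words of length one.

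For the inductive step, write $w = \cs_j w'$ with $w'$ shorter, and set $C' = w' \DDot{} B$. Then $C = \cs_j \DDot{} C'$ because $\DDot{}$ is a left action. Applying the base step to $C'$ and the inductive hypothesis to $w'$ gives
$$\bz^C = (\bz^{C'})^{\overline{\cs_j}^{-1}} = \bigl((\bz^B)^{\overline{w'}^{-1}}\bigr)^{\overline{\cs_j}^{-1}}.$$
The place permutation action is a right action, so $(\bx^{\sigma})^{\tau} = \bx^{\sigma\tau}$. Hence the right-hand side is
$$(\bz^B)^{\overline{w'}^{-1}\,\overline{\cs_j}^{-1}} = (\bz^B)^{(\overline{\cs_j}\,\overline{w'})^{-1}}.$$
The projection $\sigma\bt \mapsto \sigma$ from $\EW_e$ onto $\sym{e}$ is a group homomorphism, as the multiplication rule $(\sigma\bt)(\tau\bu) = (\sigma\tau)(\cdots)$ shows. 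Therefore $\overline{\cs_j}\,\overline{w'} = \overline{w}$, and $\bz^C = (\bz^B)^{\overline{w}^{-1}}$.

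The final assertion follows at once, since a place permutation of an $e$-tuple is a rearrangement of it.

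The only step needing care is the bookkeeping of left versus right actions, i.e.\ confirming that the inverse appears so that composition is compatible. The rest is routine.
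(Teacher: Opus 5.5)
Your proposal is correct and follows essentially the paper's approach: the paper records this as an immediate consequence of Lemma \ref{L:cs_j}(1), i.e.\ $\bz^{\cs_j \DDot{} B} = \cs_j \CDot{0} \bz^B$, iterated over a word for $w$. Your induction on word length, together with the right-action and homomorphism bookkeeping, is exactly that iteration written out. Equivalently, since $\CDot{0}$ is a left action, one gets $\bz^C = w \CDot{0} \bz^B = (\bz^B)^{\overline{w}^{-1}}$ directly from \eqref{E:CDot}.
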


With the definitions/notations in Definition \ref{D:}, we can now state our first main theorem---two separate necessary and sufficient conditions for a block $B$ of $\HH_n$ to have the property that all the $\ell$-partitions lying in it have no addable node with $(e,\br)$-residue $j$:

\begin{thm} \label{T:equiv1}
Let $B$ be a block of $\HH_n$, and let 
$j \in \ZZ/e\ZZ$.  The following statements are equivalent:
\begin{enumerate}
\item There exists $x \in [x_j^B + (m_B + \delta_{j0})\ell,\ x^B_{j+1} ]$ such that $x \equiv_{\ell} i$ for some $i \in \II_B$.

\item $y^B_{j+1} - y^B_j - \delta_{j0} + \bbone_{z^B_j \leq \Ht_{\II_B}(z^B_{j+1})} \geq m_B+1$.

\item $B \xrightarrow{j} \cs_j \DDot{} B$, i.e.\
all $\ell$-partitions lying in $B$ have no addable node with $(e,\br)$-residue $j$.

\end{enumerate}
\end{thm}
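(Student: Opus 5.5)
The plan is to prove (1)$\Leftrightarrow$(2) by elementary arithmetic, and then (1)$\Leftrightarrow$(3) through the abacus criterion Proposition~\ref{P:abacus}(2) applied to $(\bl;\br)^{w_0}$. Write $(\bl^*;\br^*_B)=\bij_{\ell,e}((\bl;\br)^{w_0})$ and $\C_k=\beta_{x^B_k}(\mu^{(k)})$, where $\bl^*=(\mu^{(1)},\dots,\mu^{(e)})$. By Proposition~\ref{P:abacus}(2), condition (3) fails exactly when some $\bl$ in $B$ has an $x\in\C_j$ with $x+\delta_{j0}\ell\notin\C_{j+1}$. Equivalently, (3) holds iff for every $\bl$ in $B$ we have $\C_j^{+\delta_{j0}\ell}\subseteq \C_{j+1}$.

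For (1)$\Leftrightarrow$(2), write $x^B_k=y^B_k\ell+z^B_k$. The interval in (1) is $[\,y^B_j\ell+z^B_j+(m_B+\delta_{j0})\ell,\ y^B_{j+1}\ell+z^B_{j+1}\,]$. An element $\equiv_\ell i\in\II_B$ exists iff the largest number $\le x^B_{j+1}$ with residue in $\II_B$ is at least the left endpoint. Since $0\in\II_B$, that largest number is $y^B_{j+1}\ell+\Ht_{\II_B}(z^B_{j+1})$. Comparing it with $(y^B_j+m_B+\delta_{j0})\ell+z^B_j$, and using that both residues lie in $[0,\ell-1]$, reduces the condition to
$y^B_{j+1}-y^B_j-\delta_{j0}-m_B>0$, or $=0$ with $z^B_j\le \Ht_{\II_B}(z^B_{j+1})$.
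This is exactly (2).

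For (1)$\Rightarrow$(3), take $\bl$ in $B$ and suppose $x\in\C_j$ with $x+\delta_{j0}\ell\notin\C_{j+1}$. Then $\max(\C_j)\ge x$ and $\min(\ZZ\setminus\C_{j+1})\le x+\delta_{j0}\ell$. Because $\fs(\C_k)=x^B_k$, a bead at or above the charge forces $\mu^{(j)}\neq\varnothing$ or gives $x^B_j\le x$; similarly for $\C_{j+1}$. Apply Proposition~\ref{P:abacus}(1) to $\mu^{(j)}$ and to $\mu^{(j+1)}$ with residue $i\in\II_B$. This bounds the number of nodes of $\bl^*$ of $(\ell;\br^*_B)$-residue $i$ from below by roughly the number of integers $\equiv_\ell i$ in a window that contains the interval of (1) after shifting by $m_B\ell$. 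That count would exceed $m_{\ell-i}=m_B$, which contradicts the definition of the moving vector. The delicate part is the case where one of $\mu^{(j)},\mu^{(j+1)}$ is empty: there we use $x\ge \min(\ZZ\setminus\C_j)$ and the charges directly, and the floor expressions in Proposition~\ref{P:abacus}(1) line up with the endpoints of (1).

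For (3)$\Rightarrow$(1), argue by contrapositive. Assuming no $x$ in the interval has residue in $\II_B$, construct $\bl$ in $B$ with an offending bead, building $\bl^*$ directly. Let $i_0=\ell-k$ be an index with $m_k=m_B$. Choose $\mu^{(j)}$, $\mu^{(j+1)}$ so that $\C_j$ has a bead at some $x$ while $\C_{j+1}$ has a gap at $x+\delta_{j0}\ell$, using only nodes whose residue counts fit inside $\mv(B)$. Then distribute the remaining required residues, as prescribed by $\mv(B)$, into other components, or into these two far away. Proposition~\ref{P:abacus}(1) shows the minimum cost of such a configuration is governed by the residues in the interval. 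When the interval avoids $\II_B$, every residue needed has $m_i>m_B$ or appears at most $m_B$ times, so the configuration is affordable.

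Realising an arbitrary residue multiset as an actual $\bl^*$ with the exact moving vector is the main obstacle. Here I would use Lemma~\ref{L:-} (reducing $m_B$ by shifting, so that one may assume $m_B$ small) together with \cite[Corollary 3.8]{LQT}, which provides existence of $\ell$-partitions with prescribed core and moving vector, to fill in the leftover weight.
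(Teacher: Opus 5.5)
Your (1)$\Leftrightarrow$(2) computation is the paper's. Your (1)$\Rightarrow$(3) follows the paper's (2)$\Rightarrow$(3) in outline: count nodes of a residue $i\in\II_B$ in $\mu^{(j)},\mu^{(j+1)}$ via Proposition~\ref{P:abacus}(1), split according to which of the two is empty, and contradict $m_{\ell-i}=m_B$. The paper takes $i=\Ht_{\II_B}(z^B_{j+1})$, but the residue supplied by (1) works equally well.

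The gap is in (3)$\Rightarrow$(1). For your $e$-multipartition to come from an $\ell$-partition lying in $B$, it must have charge $\br^*_B$ and $\ell$-residue content \emph{exactly} $\mv(B)$. These data determine the $\ell$-charge (cf.\ \cite[Corollary 3.5]{LQT}), and your plan gives no way to arrange them. The residue content of a partition of fixed charge is that of its $\ell$-core plus a multiple of $(1,\dots,1)$. So an arbitrary leftover residue vector cannot simply be put on other runners or placed ``far away'': adding $\ell$-hooks only contributes multiples of $(1,\dots,1)$, and for $e=2$ there are no other runners. Citing \cite[Corollary 3.8]{LQT} does not fix this. It produces \emph{some} $\ell$-partition with prescribed core and moving vector, but gives no control over the beads on runners $j$ and $j+1$. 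Lemma~\ref{L:-} is only useful with a specific choice of $\ba$ and a way back to $B$, and you supply neither. The claim that the configuration is ``affordable'' when the interval avoids $\II_B$ is also not an argument.

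The missing idea is to let a genuine $\ell$-partition supply the residue content, and to add only a multiple of $(1,\dots,1)$ by hand.
\begin{enumerate}
\item Apply Lemma~\ref{L:-} with $\ba=m_B\Be_{j+1}$. This gives a core block $B^-$ with $\mv(B^-)=\mv(B)-m_B\bone$, $x^{B^-}_{j+1}=x^B_{j+1}-m_B\ell$, and $x^{B^-}_a=x^B_a$ for $a\ne j+1$. Hence failure of (2) for $B$ becomes failure of (2) for $B^-$, now with $m_{B^-}=0$ and $\II_{B^-}=\II_B$.
\item In $B^-$ there is an $\ell$-partition $\bm$ with an addable $j$-node. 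This is trivial if $\hub_j(B^-)<0$, and otherwise follows from the core-block result \cite[Proposition 4.23]{LQT}.
\item Take the largest $b\in\B_j$ with $b+\delta_{j0}\ell\notin\B_{j+1}$. Since $m_{B^-}=0$, $\mu^{*(j+1)}$ misses some $\ell$-residue. So $\B_{j+1}$ has no bead at or beyond $b+\delta_{j0}\ell+\ell$, and by maximality $\B_j$ has no bead at or beyond $b+\ell$.
\item Moving $b$ to $b+m_B\ell$ therefore adds a single rim hook of size $m_B\ell$ to $\mu^{*(j)}$, i.e.\ exactly $m_B$ nodes of each $\ell$-residue. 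Restoring the charge of runner $j+1$ only translates $\B_{j+1}$ by $m_B\ell$, so $\mu^{*(j+1)}$ is unchanged.
\item The resulting $e$-multipartition has charge $\br^*_B$ and moving vector $\mv(B)$. So it comes from an $\ell$-partition lying in $B$ (Theorem~\ref{T:Naka}), and $b+m_B\ell$ is still an offending bead.
\end{enumerate}
Without this, or an equivalent explicit construction, the hard direction is not proved.
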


\begin{proof} \hfill
\begin{description}
\item[$(1) \Leftrightarrow (2)$]
Since $x^B_{j+1} = y^B_{j+1} \ell + z^B_{j+1}$ and $0 \in \II_B$, we have
$$
\max\{ x \leq x^B_{j+1} \mid x \equiv_{\ell} i \text{ for some } i \in \II_B \} = y^B_{j+1} \ell + \Ht_{\II_B}(z^B_{j+1}).
$$
Thus
\begin{align*}
\exists x \in [&x_j^B + (m_B + \delta_{j0})\ell,\ x^B_{j+1} ], \exists i \in \II_B,\ x\equiv_{\ell} i \\
&\Leftrightarrow
\quad y^B_{j+1} \ell + \Ht_{\II_B}(z^B_{j+1}) \in [x_j^B + (m_B + \delta_{j0})\ell,\ x^B_{j+1} ] \\
&\Leftrightarrow
\quad y^B_{j+1} \ell + \Ht_{\II_B}(z^B_{j+1}) \geq x_j^B + (m_B + \delta_{j0})\ell \\
&\Leftrightarrow
\quad y^B_{j+1} \ell + \Ht_{\II_B}(z^B_{j+1}) \geq y_j^B \ell + z^B_j + (m_B + \delta_{j0})\ell \\
&\Leftrightarrow
\quad y^B_{j+1} - y^B_j - \delta_{j0} \geq m_B + (z^B_j -  \Ht_{\II_B}(z^B_{j+1}))/\ell \\
&\Leftrightarrow
\quad y^B_{j+1} - y^B_j - \delta_{j0} \geq
\begin{cases}
m_B + 1 &\text{if } z^B_j >  \Ht_{\II_B}(z^B_{j+1}) \\
m_B &\text{if } z^B_j \leq  \Ht_{\II_B}(z^B_{j+1})
\end{cases} \\
&\Leftrightarrow
\quad y^B_{j+1} - y^B_j - \delta_{j0} + \bbone_{z^B_j \leq  \Ht_{\II_B}(z^B_{j+1})} \geq m_B +1.
\end{align*}

\item[$(2) \Rightarrow (3)$]
Let $i = \Ht_{\II_B}(z^B_{j+1})$.  
Then $i \in \II_B$, so that $m_{\ell-i} = m_B$, and $0 \leq i \leq z^B_{j+1} \leq \ell-1$.  Consequently,
\begin{gather}
  \left\lfloor \frac{x^B_{j+1} - i}{\ell} \right\rfloor = \left\lfloor \frac{y^B_{j+1}\ell + z^B_{j+1} - i}{\ell} \right\rfloor = y^B_{j+1}, \label{E:y^B_{j+1}} \\
  z^B_j - \bbone_{z^B_j > i}\, \ell \leq i \leq z^B_{j+1}.  \label{E:z^B_j}
\end{gather}

Let $\bl$ be an $\ell$-partition lying in $B$, and let $\iota_{\ell,e}((\bl;\br)^{w_0}) = (\bl^*;\br^*_B)$. Let $\bl^* = (\lambda^{*(1)},\dotsc, \lambda^{*(e)})$ and $\beta_{\br^*_B}(\bl^*) = (\C_1,\dotsc, \C_e)$.
For convenience, we also let $\C_0 := \C_e$ and $\lambda^{*(0)} := \lambda^{*(e)}$.
Note that 
\begin{equation} \label{E:max(C_j)}
\max(\C_j) <
\left( \left \lfloor \frac{\max(\C_j) - i}{\ell} \right \rfloor + 1  \right) \ell + i.
\end{equation}

We claim that $\max(\C_j) + \delta_{j0} \ell < \min (\ZZ \setminus \C_{j+1})$.
Assuming the claim, then $b \in \C_j$ implies that $b + \delta_{j0} \ell \in \C_{j+1}$, so that $\bl$ has no addable node with $(e,\br)$-residue $j$ by Proposition \ref{P:abacus}(2).

It remains to prove the claim.  We consider four cases separately.

\begin{description}
  \item[Case 1. $\lambda^{*(j)} = \varnothing = \lambda^{*(j+1)}$]
  In this case $\C_j = \ZZ_{< x^B_j}$ and $\C_{j+1} = \ZZ_{<x^B_{j+1}}$.
  Thus,
  \begin{align*}
    \max(\C_j) + \delta_{j0}\ell &=  x^B_j -1 + \delta_{j0}\ell = (y^B_j + \delta_{j0})\ell + z^B_{j} - 1 
    \\
    &\leq (y^B_{j+1} - \bbone_{z_j^B > i} - m_B)\ell + z^B_j -1 \\
    &\leq y^B_{j+1} \ell + z^B_{j+1} -1 = x^B_{j+1} -1 < x^B_{j+1} = \min(\ZZ \setminus \C_{j+1}) ,
  \end{align*}
  where the first inequality of the last line follows from \eqref{E:z^B_j}.

  \item[Case 2. $\lambda^{*(j)} = \varnothing \ne \lambda^{*(j+1)}$]
  Similar to Case 1, we obtain $\C_j = \ZZ_{< x^B_j}$, and 
  $$
  \max(\C_j) + \delta_{j0}\ell 
  \leq (y^B_{j+1} - \bbone_{z_j^B > i} - m_B)\ell + z^B_j - 1.$$
  By Proposition \ref{P:abacus}(1) and \eqref{E:y^B_{j+1}}, we have
  \begin{align*}
  m_B = m_{\ell - i}
  &\geq \left \lfloor \frac{\max(\C_{j+1}) - i}{\ell} \right\rfloor - \left \lfloor \frac{\min(\ZZ\setminus \C_{j+1}) -i}{\ell} \right\rfloor \\
  &\geq \left \lfloor \frac{x^B_{j+1} - i}{\ell} \right\rfloor - \left \lfloor \frac{\min(\ZZ\setminus \C_{j+1}) -i}{\ell} \right\rfloor \\
  &= y^B_{j+1} - \left \lfloor \frac{\min(\ZZ\setminus \C_{j+1}) -i}{\ell} \right\rfloor.
  \end{align*}
  Combining the above two inequalities, coupled with \eqref{E:z^B_j}, we get
  \begin{align*}
    \max(\C_j) + \delta_{j0}\ell &\leq \left(\left \lfloor \frac{\min(\ZZ\setminus \C_{j+1}) -i}{\ell} \right\rfloor - \bbone_{z^B_j > i} \right) \ell + z_j^B -1
    < \min(\ZZ \setminus \C_{j+1}).
  \end{align*}

  \item[Case 3. $\lambda^{*(j)} \ne \varnothing = \lambda^{*(j+1)}$]
  We have $\C_{j+1} = \ZZ_{<x^B_{j+1}}$, and by Proposition \ref{P:abacus}(1)
  \begin{align*}
  m_B = m_{\ell - i}
  &\geq \left \lfloor \frac{\max(\C_{j}) - i}{\ell} \right\rfloor - \left \lfloor \frac{\min(\ZZ\setminus \C_{j}) -i}{\ell} \right\rfloor \\
  &\geq \left \lfloor \frac{\max(\C_{j}) - i}{\ell} \right\rfloor - \left \lfloor \frac{x^B_j -1 -i}{\ell} \right\rfloor \\
  &= \left \lfloor \frac{\max(\C_{j}) - i}{\ell} \right\rfloor - y^B_j + \bbone_{z^B_j \leq i}.
  \end{align*}
  Thus, by \eqref{E:max(C_j)}, we have
  \begin{align*}
  \max(\C_j) + \delta_{j0} \ell
  &< 
    \left( \left \lfloor \frac{\max(\C_j) - i}{\ell} \right \rfloor + 1 + \delta_{j0} \right) \ell + i \\
  &\leq (y^B_j - \bbone_{z^B_j \leq i} + m_B + 1 + \delta_{j0} ) \ell + i \\
  &\leq y^B_{j+1} \ell + i \qquad \qquad (\text{by (2)---note that $i= \Ht_{\II_B}(z^B_{j+1})$}) \\
  &
  \leq y^B_{j+1}\ell + z^B_{j+1} = x^B_{j+1} = \min(\ZZ \setminus \C_{j+1}).
  \end{align*}

  \item[Case 4. $\lambda^{*(j)} , \lambda^{*(j+1)} \ne \varnothing$]
  By Proposition \ref{P:abacus}(1),
  \begin{align*}
  m_B = m_{\ell - i}
  &\geq \sum_{k=j}^{j+1} \left( \left \lfloor \frac{\max(\C_{k}) - i}{\ell} \right\rfloor - \left \lfloor \frac{\min(\ZZ\setminus \C_{k}) -i}{\ell} \right\rfloor \right).
  \end{align*}
  This yields
  \begin{align*}
  \min(\ZZ \setminus \C_{j+1})
  &\geq
  \left\lfloor  \frac{\min(\ZZ\setminus \C_{j+1}) -i}{\ell} \right\rfloor \ell + i \notag \\
  &\geq
  \left( \left\lfloor \frac{\max(\C_{j}) - i}{\ell} \right\rfloor - \left\lfloor \frac{\min(\ZZ\setminus \C_{j}) -i}{\ell} \right\rfloor + \left\lfloor \frac{\max(\C_{j+1}) - i}{\ell} \right\rfloor - m_B \right) \ell + i  \notag \\
  &\geq
  \left( \left\lfloor \frac{\max(\C_{j}) - i}{\ell} \right\rfloor - \left\lfloor \frac{x^B_j -1 -i}{\ell} \right\rfloor + \left\lfloor \frac{x^B_{j+1} - i}{\ell} \right\rfloor - m_B \right) \ell + i  \notag \\
  &=
  \left( \left\lfloor \frac{\max(\C_{j}) - i}{\ell} \right\rfloor - y^B_j + \bbone_{z^B_j \leq i} + y^B_{j+1} - m_B \right) \ell + i  \notag \\
  &\geq
   \left( \left\lfloor \frac{\max(\C_{j}) - i}{\ell} \right\rfloor + 1 + \delta_{j0} \right) \ell + i \qquad (\text{by (2)---note that $i= \Ht_{\II_B}(z^B_{j+1})$}) \\
  &> \max(\C_j) + \delta_{j0} \ell \qquad \qquad \qquad \qquad \qquad  \text{(by \eqref{E:max(C_j)}).}
  \end{align*}
%
\end{description}

\item[$(3) \Rightarrow (2)$]
Suppose for a contradiction that $y^B_{j+1} - y^B_j - \delta_{j0} + \bbone_{z^B_j \leq \Ht_{\II_B}(z^B_{j+1})} \leq m_B$.
Our strategy here is to associate $B$ with a block $B^-$ with $m_{B^-} = 0$ which has an $\ell$-partition with an addable node with residue $j$ and lift this addable node to an $\ell$-partition lying in $B$.

By applying Lemma \ref{L:-} with $\ba = m_B \Be_{j+1}$ to $(\bl;\br)^{w_0}$ for any $\ell$-partition $\bl$ lying in $B$, we obtain an $\ell$-partition $\bm_0$ with \begin{align}
\quot_e(\core((\bm_0;\br - m_B\bone)^{w_0})) &= \beta_{\br^*_B - m_B\ell \Be_{j+1}}(\EP); \label{E:B-core} \\
\mv_e((\bm_0;\br - m_B\bone)^{w_0}) &= \mv_e((\bl;\br)^{w_0}) - m_B \bone = \mv(B) - m_B\bone. \notag
\end{align}
Let $\br^- = \br - m_B \bone$ and let $B^-$ be the block of $\HH_{\FF,q,\br^-}(|\bm_0|)$ in which $S^{\bm_0}$ lies.
Then $(\br^-)^{w_0} = \br^{w_0} - m_B\bone \in \AAbar$, and so
$$\mv^{(\br^-)^{w_0}}(B^-) = \mv_e((\bm_0;\br^-)^{w_0}) = \mv(B) - m_B\bone,$$
so that $m_{B^-} = 0$ and $\II_{B^-} = \II_B$.
Furthermore, 
$
(\br^{-})^*_{B^-} = 
\br^{*}_B - m_B \ell\, \Be_{j+1} 
$ by \eqref{E:bu} and \eqref{E:B-core}, so that
\begin{align*}
x^{B^-}_a &= x^B_a - \delta_{a,j+1}\,m_B\ell, \qquad
y^{B^-}_a = y^B_a - \delta_{a,j+1}\,m_B, \qquad
z^{B^-}_a = z^B_a
\end{align*}
for all $a \in [1,\,e]$.

We claim that there exists $\bm \in \PP^{\ell}$ lying in $B^-$ having an addable node with $(e,\br^{-})$-residue $j$.
If $\hub_j(B^-) < 0$, then any $\ell$-partition lying in $B^-$ (for example, $\bm_0$) will do.
Thus we only need to consider the case where $\hub_j(B^-) \geq 0$, i.e. $x^{B^-}_{j+1} \geq x^{B^-}_j + \delta_{j0} \ell$, or equivalently,
$y^{B^-}_{j+1} > y^{B^-}_j + \delta_{j0}$ or ($y^{B^-}_{j+1} = y^{B^-}_j + \delta_{j0}$ and $z^{B^-}_{j+1} \geq z^{B^-}_j$).
But
\begin{align*}
y^{B^-}_{j+1} = y^B_{j+1} - m_B &\leq y^B_j + \delta_{j0} - \bbone_{z^B_j \leq \Ht_{\II_B}(z^B_{j+1})} \\
&= y^{B^-}_j + \delta_{j0} - \bbone_{z^{B^-}_j \leq \Ht_{\II_{B^-}}(z^{B^-}_{j+1})}
\leq y^{B^-}_j + \delta_{j0}.
\end{align*}
As such it suffices to consider only the case where $y^{B^-}_{j+1} = y^{B^-}_j + \delta_{j0}$ and $z^{B^-}_{j+1} \geq z^{B^-}_j > \Ht_{\II_{B^-}}(z^{B^-}_{j+1})$, with the latter further implying that $\Ht_{\II_{B^-}}(z^{B^-}_j) = \Ht_{\II_{B^-}}(z^{B^-}_{j+1})$ by the definition of $\Ht_{\II_{B^-}}$.
The existence of $\bm$ lying in $B^-$ having an addable node with $(e,\br^{-})$-residue $j$ is thus guaranteed by \cite[Proposition 4.23]{LQT}.

Let $\bm^* = (\mu^{*(1)}, \dotsc, \mu^{*(e)}) \in \PP^e$ be such that 
$\bij_{\ell,e}((\bm;\br^-)^{w_0}) = (\bm^*, (\br^-)^*_{B^-})$.
Then
$$\quot_e(\UU(\beta_{(\br^{-})^{w_0}}(\bm^{w_0}))) = (\beta_{x^{B^-}_1}(\mu^{*(1)}),\dotsc, \beta_{x^{B^-}_e}(\mu^{*(e)})) =: (\B_1,\dotsc, \B_e).$$
Since $\bm$ has an addable node with $(e,\br^-)$-residue $j$, there exists $b \in \B_j$ such that $b + \delta_{j0} \ell \notin \B_{j+1}$ (where $\B_0 := \B_e$) by Proposition \ref{P:abacus}(2), and we pick $b$ to be the largest such.  
Since $b + \delta_{j0}\ell \notin \B_{j+1} = \beta_{x^{B^-}_{j+1}}(\mu^{*(j+1)})$, and $\mu^{*(j+1)}$ does not have nodes of every $(\ell,x^{B^-}_{j+1})$-residue as $\mv_e((\bm;\br^-)^{w_0})= \mv^{(\br^-)^{w_0}}(B^-)$ and $m_{B^-} = 0$, we see $b+ \delta_{j0}\ell+ k \notin \B_{j+1}$ for all $k \in \ZZ_{\geq \ell}$, which in turn shows that $b+ k \notin \B_j$ for all $k \in \ZZ_{\geq \ell}$ by the maximality of $b$.

Let $\nu = \beta^{-1}((\B_j \setminus \{b \}) \cup \{b + m_B\ell\})$.
Then $\nu$ is obtained by $\mu^{*(j)}$ by adding a rim hook of size $m_B\ell$. Let
$\bnu^* = (\mu^{*(1)}, \dotsc, \mu^{*(j-1)}, \nu, \mu^{*(j+1)}, \dotsc, \mu^{*(e)})$, and let 
$(\bnu;\bt) \in \PP^{\ell} \times \ZZ^{\ell}$ be such that $\bij_{\ell,e}(\bnu;\bt) = (\bnu^*;\br^*_B)$.
Then
\begin{gather}
\quot_e(\core_e(\bnu;\bt)) = \beta_{\br^*_B}(\EP) = \quot_e(\beta_{|\br^{w_0}|}(\core(B))); \label{E:core} \\
\mv_e(\bnu;\bt) = \mv_e((\bm;\br^-)^{w_0}) + m_B\bone = \mv^{(\br^-)^{w_0}}(B^-) + m_B \bone = \mv(B). \label{E:mv}
\end{gather}
Furthermore, from 
\begin{gather*}
\quot_e(\UU(\beta_{\bt}(\bnu))) = \bij_{\ell,e}((\bnu;\bt)) = (\bnu^*;\br^*_B) = (\bnu^*;(\br^-)^*_{B^-} + m_B \ell \Be_{j+1}), \\
\quot_e(\UU(\beta_{(\br^-)^{w_0}}(\bm^{w_0}))) = \bij_{\ell,e}((\bm;\br^-)^{w_0}) = (\bm^*;(\br^-)^*_{B^-}),
\end{gather*} 
and how $\bm^*$ and $\bnu^*$ are related, we deduce that  
$\bt = (\br^-)^{w_0} + m_B \bone = \br^{w_0}.$
Consequently, $\core_{\HH}(\bnu^{w_0^{-1}}) = \core(B)$ and $\wt_{\HH}(\bnu^{w_0^{-1}}) = \wt(B)$ by \eqref{E:core}, \eqref{E:mv} and \eqref{E:wt-mv}, and so $\bnu^{w_0^{-1}}$ lies in $B$ by Theorem \ref{T:Naka}.
Since $b+ m_B\ell \in \beta_{\fs(\B_j)}(\nu) = \beta_{x^{\smash[t]{B^-}}_j}(\nu) = \beta_{x^B_j}(\nu)$ while $$b+ m_B\ell + \delta_{j0}\ell \notin \B_{j+1}^{+m_B\ell} = (\beta_{x^{\smash[t]{B^-}}_{j+1}}(\mu^{*(j+1)}))^{+m_B\ell} 
= \beta_{x^{\smash[t]{B^-}}_{j+1} + m_B\ell}(\mu^{*(j+1)}) = \beta_{x^B_{j+1}}(\mu^{*(j+1)}),$$ we see that $\bnu$ has an addable node with $(e,\bt)$-residue $j$, and hence $\bnu^{w_0^{-1}}$ has an addable node with $(e,\br)$-residue $j$, as desired.
\end{description}
\end{proof}

\begin{rem}
When $\ell = 1$, then $\ZZ/\ell\ZZ = \{0\} = \II_B$, $m_B = \wt(B)$ for all $B$.
Furthermore, $B$ and $\cs_j \DDot{} B$ form a $[w: k]$-pair where $w = \wt(B)$ and $k = y^B_{j+1} - y^B_j - \delta_{j0}$.  Thus statement (2) of Theorem \ref{T:equiv1} reduces to saying that $B$ and $\cs_j \DDot{} B$ form a $[w: k]$-pair with $k \geq w$. This is of course consistent with what Scopes found in her work on Scopes equivalence in level 1 \cite{Scopes}.
\end{rem}

Note that Theorem \ref{T:equiv1} is a generalisation of Corollary 4.4 and Proposition 4.23 of \cite{LQT} which dealt with the case of core blocks of Ariki-Koike algebras, or equivalently, the blocks for which $m_B = 0$ (see \cite[Theorem 3.17]{LQT}).
As \cite{LQT} already provided a necessary and sufficient condition for two core blocks to be Scopes equivalent \cite[Theorem 4.24]{LQT}, we shall focus on non-core blocks---where $m_B > 0$---in what follows, and provide a necessary and sufficient condition for two such blocks to be Scopes equivalent.

We now introduce the two invariants that will be used to classify Scopes equivalence classes of non-core blocks. 
These invariants are analogues of those appearing in earlier classifications for Iwahori–Hecke algebras due to Richards \cite{R} and for core blocks in \cite{LQT}.
Our next main result shows that these are the relevant invariants.

\begin{Def} \label{D:pyramid}
Let $B$ be a block of $\HH_n$, and recall the notations introduced in Definition \ref{D:}.
\begin{enumerate}
\item For $a, b \in [1,\, e]$ with $a < b$, define
\begin{align*}
\dd_B(a,b) &:= y^B_{\sigma_B(b)} - y^B_{\sigma_B(a)} - \bbone_{\sigma_B(a) > \sigma_B(b)}  + \bbone_{z^B_{\sigma_B(a)} \leq \Ht_{\II_B}(z^B_{\sigma_B(b)})}; \\
\pi_B(a,b) &:= \min(\dd_B(a,b), m_B).
\end{align*}
We call $\{ \pi_B(a,b) \mid a, b \in [1,\,e],\, a< b \}$ the {\em pyramid numbers} of $B$.

\item If $m_B > 0$, we call $(\bz^B)^{\sigma_B}$ the {\em Scopes vector} of $B$.
\end{enumerate}
\end{Def}

\begin{rem} \hfill
\begin{enumerate}
\item When $m_B = 0$, the Scopes vector of $B$ has already been defined in \cite[Definition 4.14]{LQT}, and is used to completely classify the Scopes equivalence classes for such blocks (\cite[Theorem 4.24]{LQT}).
Perhaps counter-intuitively, the definition of a Scopes vector for a core block is slightly more complicated than that for a non-core block.

\item When $\ell = 1$ so that $m_B = \wt(B)$, our pyramid number $\pi_B(a,b)$ equals the pyramid number $\wt(B)-\pi_B(a,b)-1$ of Richards in \cite{R}, while the Scopes vector of $B$ equals $\textbf{0}$, irrespective of whether $m_B$ equals 0 or is positive.
\end{enumerate}
\end{rem}

\begin{eg}
Let $\br = (0,\dotsc, 0 , 1)\ (= \br^{w_0})$ and $\ell \geq 2$.
\begin{enumerate}
\item
Let $B$ be the block of $\HH_{e\ell}$ with $\br^*_B = (1,0,\dotsc, 0)$ and $\mv(B) = (1,\dotsc, 1)$.
Then $m_B =1$, $\by^B = (0,\dotsc, 0)$, $\bz^B = (1,0,\dotsc,0)$ and $\sigma_B = 1_{\sym{e}}$.
Thus $(\bz^B)^{\sigma_B} = (1,0,\dotsc, 0)$ and
$$
\pi_B(a,b) = \dd_B(a,b) =
\begin{cases}
0, &\text{if } a = 1; \\
1, &\text{otherwise.}
\end{cases}
$$

\item
Let $C$ be the block of $\HH_{e\ell + (e-1)(\ell-1)}$ with $\br^*_C = (\ell,1-\ell,0,\dotsc, 0)$ and $\mv(C) = (1,\dotsc, 1)$.
Then $m_C = 1$, $\by^C = (1,-1,0,\dotsc,0)$, $\bz^C = (0,1,0,\dotsc,0)$ and $\sigma_C = \cs_1 \cs_2 \dotsm \cs_{e-1}$.
Thus $(\bz^C)^{\sigma_C} = (1,0,\dotsc, 0)$ and $\pi_C(a,b) = \dd_C(a,b) = 1$ for all $a,b \in [1,\,e]$ with $a < b$.
\end{enumerate}
\end{eg}

\begin{lem} \label{L:pyramid-core}
Let $B$ be a block of $\HH_n$.  Then $0 \leq \pi_B(a,b) \leq m_B$ for all $a,b \in [1,\,e]$ with $a < b$.

In particular, if $B$ is a core block (i.e.\ $m_B = 0$), then $\pi_B(a,b) = 0$ for all $a,b \in [1,\,e]$ with $a < b$.
\end{lem}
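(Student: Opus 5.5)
Since $\pi_B(a,b) = \min(\dd_B(a,b), m_B)$, the upper bound $\pi_B(a,b) \leq m_B$ is immediate. Also $m_B \geq 0$, because moving vectors lie in $(\ZZ_{\geq 0})^{\ell}$. So the whole statement reduces to showing $\dd_B(a,b) \geq 0$ for all $a < b$. The ``in particular'' part then follows at once: if $m_B = 0$, then $0 \leq \pi_B(a,b) \leq 0$.

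To prove $\dd_B(a,b) \geq 0$, write $p = \sigma_B(a)$ and $q = \sigma_B(b)$. By the defining property of $\sigma_B = \sigma_{\by^B}$ in Definition \ref{D:}(1), the sequence $(\by^B)^{\sigma_B}$ is weakly increasing, with equality between consecutive terms only when the indices increase. Chaining these conditions from position $a$ to position $b$ gives $y^B_p \leq y^B_q$, and if $y^B_p = y^B_q$ then every step in the chain is an equality, so $p < q$. Now split into two cases. If $y^B_q > y^B_p$, then $y^B_q - y^B_p \geq 1 \geq \bbone_{p>q}$, and the remaining indicator term is nonnegative, so $\dd_B(a,b) \geq 0$. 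If $y^B_q = y^B_p$, then $p < q$, so $\bbone_{p>q} = 0$ and $\dd_B(a,b) = \bbone_{z^B_p \leq \Ht_{\II_B}(z^B_q)} \geq 0$.

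The only point needing care is the transitivity step: that $\sigma_B$ being minimal-length sorting yields $y^B_{\sigma_B(a)} = y^B_{\sigma_B(b)} \Rightarrow \sigma_B(a) < \sigma_B(b)$ for non-adjacent $a<b$. This follows by the chaining argument above, since equality of the endpoint values forces equality at each intermediate step, and the index condition then propagates along the chain.
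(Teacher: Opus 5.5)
Your proof is correct and follows essentially the paper's argument: the paper also reduces everything to $\dd_B(a,b)\geq 0$, which it reads off from the sorting property of $\sigma_B$ in the form $y^B_{\sigma_B(a)} + \bbone_{\sigma_B(a)>\sigma_B(b)} \leq y^B_{\sigma_B(b)}$, and your chaining argument proves exactly this inequality. You are slightly more explicit than the paper in noting that $m_B\geq 0$ and in justifying the chaining step for non-adjacent $a<b$.
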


\begin{proof}
By the definition of $\sigma_B = \sigma_{\by^B}$, for $a,b \in [1,\,e]$ with $a < b$, we have $ y^B_{\sigma_B(a)} + \bbone_{\sigma_B(a) > \sigma_B(b)} \leq y^B_{\sigma_B(b)}$, so that
 $$\dd_B(a,b) = y^B_{\sigma_B(b)} - y^B_{\sigma_B(a)} - \bbone_{\sigma_B(a) > \sigma_B(b)}  + \bbone_{z^B_{\sigma_B(a)} \leq \Ht_{\II_B}(z^B_{\sigma_B(b)})} \geq 0.$$
\end{proof}

\begin{prop} \label{P:same-residue-pyramid}
Let $B$ be a block of $\HH_n$ with $m_B > 0$. Let $j \in \ZZ/e\ZZ$, and 
suppose that $B \xrightarrow{j} C$.
Then
$(\bz^B)^{\sigma_B} = (\bz^C)^{\sigma_C}$ and $\pi_B = \pi_C$.
\end{prop}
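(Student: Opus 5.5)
Since $B \xrightarrow{j} C$, Theorem \ref{T:equiv1} (3)$\Rightarrow$(2) gives
$$y^B_{j+1} - y^B_j - \delta_{j0} + \bbone_{z^B_j \leq \Ht_{\II_B}(z^B_{j+1})} \geq m_B + 1 \geq 2,$$
and in particular $y^B_{j+1} > y^B_j + \delta_{j0}$, because the indicator is at most $1$ and $m_B>0$. This strictness is where $m_B>0$ is used.

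First we record the data of $C$. Since $C$ lies in the same $\AW_e$-orbit, $\mv(C)=\mv(B)$ by Proposition \ref{P:moving-vector-Weyl-orbit}, so $m_C=m_B$ and $\II_C=\II_B$. By Lemma \ref{L:cs_j}(1), $\by^C = \cs_j \CDot{1}\by^B$ and $\bz^C = (\bz^B)^{\overline{\cs_j}}$. By Lemma \ref{L:cs_j}(2), applicable because of the strict inequality above, $\sigma_C = \overline{\cs_j}\sigma_B$. Hence
$$(\bz^C)^{\sigma_C} = ((\bz^B)^{\overline{\cs_j}})^{\overline{\cs_j}\sigma_B} = (\bz^B)^{\overline{\cs_j}^2\sigma_B} = (\bz^B)^{\sigma_B},$$
using the right-action rule $(\bx^\sigma)^\tau=\bx^{\sigma\tau}$ and $\overline{\cs_j}^2=1$. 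This settles the Scopes vector.

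For the pyramid numbers, fix $a<b$ and put $p=\sigma_B(a)$, $q=\sigma_B(b)$, so $\sigma_C(a)=\overline{\cs_j}(p)$ and $\sigma_C(b)=\overline{\cs_j}(q)$. We compare $\dd_C(a,b)$ with $\dd_B(a,b)$ in cases. The $z$-indicator terms agree, since $z^C_{\overline{\cs_j}(p)} = z^B_p$.

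\emph{Case $\{p,q\}\cap\{j,j+1\}=\varnothing$ (indices mod $e$).} Here $y^C$ agrees with $y^B$ at $p$ and $q$, and the relative order of $p$ and $q$ is unchanged, so $\dd_C(a,b)=\dd_B(a,b)$.

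\emph{Case $|\{p,q\}\cap\{j,j+1\}|=1$.} Consider first $j\neq 0$. Then $\overline{\cs_j}$ swaps the positions $j$ and $j+1$ without changing $y$-values, so the $y$-difference is preserved. The indicator $\bbone_{\sigma(a)>\sigma(b)}$ is also preserved, because $p$ or $q$ moves by one across $j,j+1$ while the other index lies outside $\{j,j+1\}$.

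For $j=0$, the map $\overline{\cs_0}$ swaps positions $1$ and $e$ and shifts $y$ by $\pm1$: $y^C_1=y^B_e+1$ and $y^C_e=y^B_1-1$. A shift by $+1$ at position $1$ (previously at $e$) is compensated by the indicator changing from $0$ to $1$ relative to any middle index, and similarly in the other direction. Checking the four subcases is routine.

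\emph{Case $\{p,q\}=\{j,j+1\}$.} Since $y^B_{j+1}>y^B_j+\delta_{j0}$, minimality of $\sigma_B$ forces $p=j$ and $q=j+1$, so $a<b$ corresponds to $j$ before $j+1$. Then $\dd_B(a,b)$ equals the quantity in Theorem \ref{T:equiv1}(2), up to the $\delta_{j0}$ versus $\bbone_{p>q}$ bookkeeping; for $j=0$ we have $p=e$, $q=1$, and $\bbone_{p>q}=1$ matches $\delta_{j0}$. So $\dd_B(a,b)\ge m_B+1$ and $\pi_B(a,b)=m_B$.

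On the $C$ side, $\sigma_C(a)=j+1$ and $\sigma_C(b)=j$. A direct computation gives
$$y^C_j-y^C_{j+1} = y^B_{j+1}-y^B_j,$$
shifted appropriately when $j=0$, and the indicator $\bbone_{\sigma_C(a)>\sigma_C(b)}=\bbone_{j+1>j}$ equals $1-\delta_{j0}$, so it compensates in the same way. Thus $\dd_C(a,b)=\dd_B(a,b)\ge m_B+1$, hence $\pi_C(a,b)=m_B=\pi_B(a,b)$.

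The main obstacle is the careful index bookkeeping, especially the affine generator $j=0$, where both the $y$-shift by $\delta_{j0}(\Be_1-\Be_e)$ and the $\bbone_{\sigma(a)>\sigma(b)}$ terms change. I would handle that case by writing $\by^C$ explicitly and checking each subcase. Note that in the last case we only need $\pi$ to agree, not $\dd$, because the truncation by $m_B$ absorbs any discrepancy. This gives a safety net if the $\dd$ values turn out to differ when $j=0$.
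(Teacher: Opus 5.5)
Your overall route is the same as the paper's: Theorem \ref{T:equiv1} supplies the strict inequality, Lemma \ref{L:cs_j} gives $\sigma_C=\overline{\cs_j}\sigma_B$ and the Scopes vector, and a case comparison of $\dd_C$ with $\dd_B$ finishes. The Scopes-vector part and your first two cases are fine.

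The error is in the last case $\{p,q\}=\{j,j+1\}$, where your claim $\dd_C(a,b)=\dd_B(a,b)$ is false. For $j\neq 0$ you have $p=j$, $q=j+1$, $\sigma_C(a)=j+1$ and $\sigma_C(b)=j$, so
\[
\dd_C(a,b)=y^C_j-y^C_{j+1}-\bbone_{j+1>j}+\bbone_{z^B_j\le \Ht_{\II_B}(z^B_{j+1})}
=y^B_{j+1}-y^B_j-1+\bbone_{z^B_j\le \Ht_{\II_B}(z^B_{j+1})}=\dd_B(a,b)-1,
\]
since $\bbone_{\sigma_B(a)>\sigma_B(b)}=\bbone_{j>j+1}=0$. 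The $y$-difference is unchanged, so the indicator jumping from $0$ to $1$ compensates nothing; it is a net loss of $1$. For $j=0$ (so $p=e$, $q=1$), the $y$-difference becomes $y^C_e-y^C_1=y^B_1-y^B_e-2$, while the indicator drops from $1$ to $0$. The net change is again $-1$. So in both cases $\dd_C(a,b)=\dd_B(a,b)-1$, which is exactly the paper's formula.

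Your ``safety net'' does not repair this as written. Truncation at $m_B$ does not absorb an arbitrary discrepancy, only one that keeps $\dd_C(a,b)\ge m_B$, so you need the exact size of the change. With $\dd_C(a,b)=\dd_B(a,b)-1$ and $\dd_B(a,b)\ge m_B+1$ from Theorem \ref{T:equiv1}(2), you get $\dd_C(a,b)\ge m_B$, hence $\pi_C(a,b)=m_B=\pi_B(a,b)$.

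This is precisely why the bound in Theorem \ref{T:equiv1}(2) is $m_B+1$ rather than $m_B$. The unique pair whose $\dd$-value changes drops by exactly one, and the extra $1$ is exactly the slack the truncation needs. Had your equality held, the bound $\dd_B(a,b)\ge m_B+1$ would have played no role in the pyramid-number part at all, which is a sign the computation was off. With this correction, your proof goes through.
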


\begin{proof}
By Theorem \ref{T:equiv1}(2),
$y^B_{j+1} - y^B_j - \delta_{j0} \geq m_B > 0$.
Since $C = \cs_j \DDot{} B$, we have by Lemma \ref{L:cs_j}
\begin{align*}
\sigma_C 
&= \overline{\cs_j} \sigma_{B}, \\
(\bz^C)^{\sigma_C} &= ((\bz^B)^{\overline{\cs_j}})^{\overline{\cs_j}\sigma_B} = (\bz^B)^{\sigma_B}; \\
(\by^C)^{\sigma_C} &= ((\by^B)^{\overline{\cs_j}} + \delta_{j0}(\Be_1 - \Be_e))^{\overline{\cs_j}\sigma_B}
= (\by^B)^{\sigma_B} + \delta_{j0}(\Be_{\sigma_B^{-1}(e)} - \Be_{\sigma_B^{-1}(1)}).
\end{align*}
Furthermore, since $\mv(B) = \mv(C)$ by Proposition \ref{P:moving-vector-Weyl-orbit}, we have $\II_B = \II_C$.
Consequently,
for $a,b \in [1,\, e]$ with $a< b$,
\begin{align*}
\dd_{C}(a,b)
&= y^{C}_{\sigma_C(b)} - y^{C}_{\sigma_{C}(a)} - \bbone_{\sigma_{C}(a) > \sigma_{C}(b)} + \bbone_{z^{C}_{\sigma_{C}(a)} \leq \Ht_{\II_C} (z^{C}_{\sigma_{C}(b)})} \\
&= y^B_{\sigma_B(b)} + \delta_{j0}(\delta_{e,\sigma_B(b)} - \delta_{1,\sigma_B(b)}) - y^B_{\sigma_B(a)} - \delta_{j0}(\delta_{e,\sigma_B(a)} - \delta_{1,\sigma_B(a)}) \\
&\qquad - \bbone_{\overline{\cs_j}\sigma_B(a) > \overline{\cs_j}\sigma_B(b)} + \bbone_{z^{B}_{\sigma_{B}(a)} \leq \Ht_{\II_B} (z^{B}_{\sigma_B(b)})} \\
&= d_B(a,b) + \delta_{j0}(\delta_{e,\sigma_B(b)} + \delta_{1,\sigma_B(a)} - \delta_{e,\sigma_B(a)} - \delta_{1,\sigma_B(b)} ) \\
&\qquad - \bbone_{\overline{\cs_j}\sigma_B(a) > \overline{\cs_j}\sigma_B(b)} + \bbone_{\sigma_B(a) > \sigma_B(b)}.
\end{align*}
When $j > 0$, then $\bbone_{\overline{\cs_j}\sigma_B(a) > \overline{\cs_j}\sigma_B(b)} = \bbone_{\sigma_B(a) > \sigma_B(b)}$ unless $\{\sigma_B(a),\sigma_B(b)\} = \{j,j+1\}$, in which case $\sigma_B(a) = j$ and $\sigma_B(b) = j+1$ since $y^B_{j+1} > y^B_j + \delta_{j0}$ and $y^B_{\sigma_B(a)} \leq y^B_{\sigma_B(b)}$.
Thus we have
$$
\dd_C(a,b) = \dd_B(a,b)- \delta_{j,\sigma_B(a)}\delta_{j+1,\sigma_B(b)}
$$
in this case.
On the other hand, when $j = 0$, then $\sigma_B(a) = 1$ or $\sigma_B(b) = e$ will imply that $\sigma_B(a) < \sigma_B(b)$ and $\overline{\cs_j}\sigma_B(a) > \overline{\cs_j}\sigma_B(b)$, while $\sigma_B(b) = 1$ or $\sigma_B(a) = e$ will imply that $\sigma_B(b) < \sigma_B(a)$ and $\overline{\cs_j}\sigma_B(b) > \overline{\cs_j}\sigma_B(a)$.
Furthermore, since $y^B_{\sigma_B(a)} \leq y^B_{\sigma_B(b)}$ and $y^B_1 = y^B_{j+1} > y^B_j + \delta_{j0} = y^B_e + 1$, we must have $\sigma_B(a) = e$ and $\sigma_B(b) = 1$ if $\{\sigma_B(a),\sigma_B(b)\} = \{1,e\}$.
From this, it is straightforward to verify that
$$
\dd_{C}(a,b) = \dd_B(a,b) - \delta_{e,\sigma_B(a)}\delta_{1,\sigma_B(b)}.
$$
Thus, for $j \in \ZZ/e\ZZ$ arbitrary, we have
$$
\dd_C(a,b) =
\begin{cases}
\dd_B(a,b) - 1, &\text{if } \sigma_B(a) \equiv_e j,\ \sigma_B(b) = j+1; \\
\dd_B(a,b), &\text{otherwise.}
\end{cases}
$$
Since $\dd_B(\sigma_B^{-1}(j),\sigma_B^{-1}(j+1)) \geq m_B+1$ by Theorem \ref{T:equiv1} (where $\sigma_B^{-1}(j)$ is to be read as $\sigma_B^{-1}(e)$ if $j =0$), we see that
$$
\pi_{C}(a,b) = \min(\dd_{C}(a,b), m_B) = \min(\dd_B(a,b), m_B) = \pi_B(a,b)
$$
for all $a,b \in [1,\,e]$ with $a < b$.
\end{proof}

Since $B \xrightarrow{j} C$ is the generating relation for Scopes equivalence, the Scopes vector and pyramid numbers are indeed invariants for non-core blocks in the same Scopes equivalence class:

\begin{cor} \label{C:Scopes}
Scopes equivalent non-core blocks of Ariki-Koike algebras have the same Scopes vector and the same pyramid numbers.
\end{cor}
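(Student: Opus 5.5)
The plan is to reduce the corollary to Proposition \ref{P:same-residue-pyramid} by induction on the length of a chain witnessing Scopes equivalence. Let $B$ and $C$ be Scopes equivalent non-core blocks. By definition there is a sequence $B_0 = B, B_1, \dotsc, B_k = C$ of blocks such that, for each $a \in [1,\,k]$, either $B_{a-1} \xrightarrow{j_a} B_a$ or $B_a \xrightarrow{j_a} B_{a-1}$ for some $j_a \in \ZZ/e\ZZ$.

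First we check that every intermediate block $B_a$ is non-core, so that Proposition \ref{P:same-residue-pyramid} applies at each step. Each relation $\xrightarrow{j}$ relates $B'$ to $\cs_j \DDot{} B'$, so all the $B_a$ lie in a single $\AW_e$-orbit. By Proposition \ref{P:moving-vector-Weyl-orbit}, they therefore share the same moving vector $\mv(B_a) = \mv(B)$, computed with respect to the fixed $w_0$. Hence $m_{B_a} = m_B > 0$ and $\II_{B_a} = \II_B$ for all $a$.

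Next we handle a single step. If $B_{a-1} \xrightarrow{j_a} B_a$, then Proposition \ref{P:same-residue-pyramid}, applied to $B_{a-1}$ (which has $m_{B_{a-1}} > 0$), gives $(\bz^{B_{a-1}})^{\sigma_{B_{a-1}}} = (\bz^{B_a})^{\sigma_{B_a}}$ and $\pi_{B_{a-1}} = \pi_{B_a}$. If instead $B_a \xrightarrow{j_a} B_{a-1}$, we apply the same proposition to $B_a$ (again with $m_{B_a} > 0$) and obtain the same two equalities with the roles reversed. Since equality is symmetric, the direction of the arrow does not matter.

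Chaining these equalities along $a = 1, \dotsc, k$ gives $(\bz^B)^{\sigma_B} = (\bz^C)^{\sigma_C}$ and $\pi_B = \pi_C$, which is the statement. The only point requiring care is the non-core hypothesis at the intermediate blocks, and this is settled by the invariance of $\mv$ under the $\AW_e$-action. There is no genuine obstacle beyond that.
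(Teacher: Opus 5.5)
Your proposal is correct and is essentially the paper's argument: the paper deduces the corollary directly from Proposition \ref{P:same-residue-pyramid}, because the relations $B \xrightarrow{j} C$ generate Scopes equivalence. Your explicit check, via Proposition \ref{P:moving-vector-Weyl-orbit}, that every intermediate block in the chain has the same moving vector and hence stays non-core is a detail the paper leaves implicit.
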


\begin{Def} \label{D:Scopes-initial}
Let $B$ be a block of $\HH_n$ with $m_B > 0$.
We say that $B$ is {\em Scopes-initial} if and only if
$y^B_{j+1} - y^B_j - \delta_{j0} + \bbone_{z^B_{j} \leq \Ht_{\II_B}(z^B_{j+1})} \leq m_B$
for all $j \in \ZZ/e\ZZ$.
\end{Def}

\begin{rem} \label{R:initial}
Let $\sigma_B^{-1}(j) = a$ and $\sigma_B^{-1}(j+1) = b$.
If $a > b$, then $y^B_{j+1} = y^B_{\sigma_B(b)} \leq y^B_{\sigma_B(a)} = y^B_j$ by the definition of $\sigma_B = \sigma_{\by^B}$, so that
$$
y^B_{j+1} - y^B_j - \delta_{j0} + \bbone_{z^B_{j} \leq \Ht_{\II_B}(z^B_{j+1})} \leq 1 \leq m_B.
$$
Thus $B$ is Scopes-initial if and only if $y^B_{j+1} - y^B_j - \delta_{j0} + \bbone_{z^B_{j} \leq \Ht_{\II_B}(z^B_{j+1})} \leq m_B$ for all $j \in \ZZ/e\ZZ$ such that $\sigma^{-1}_B(j) < \sigma_B^{-1}(j+1)$.
\end{rem}

\begin{prop} \label{P:Scopes}
Let $B$ be a block of $\HH_n$ with $m_B > 0$.
There exists a sequence $B_0, \dotsc, B_k$ of blocks such that:
\begin{enumerate}
\item $B_0 = B$, and $B_k$ is Scopes-initial;
\item for each $a \in [0, k-1]$, $B_a \xrightarrow{j_a} B_{a+1}$ for some $j_a \in \ZZ/e\ZZ$.
\end{enumerate}
In particular, $B$ and $B_k$ are Scopes equivalent.
\end{prop}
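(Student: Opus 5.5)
We argue by a descent procedure: starting from $B_0 = B$, as long as the current block $B_a$ is not Scopes-initial, we apply a Scopes move $B_a \xrightarrow{j_a} B_{a+1}$, and we show that this process terminates. By Definition \ref{D:Scopes-initial}, if $B_a$ is not Scopes-initial then there is $j_a \in \ZZ/e\ZZ$ with
\[
y^{B_a}_{j_a+1} - y^{B_a}_{j_a} - \delta_{j_a0} + \bbone_{z^{B_a}_{j_a} \leq \Ht_{\II_{B_a}}(z^{B_a}_{j_a+1})} \geq m_{B_a}+1 .
\]
By Theorem \ref{T:equiv1} ((2)$\Rightarrow$(3)) this gives $B_a \xrightarrow{j_a} \cs_{j_a} \DDot{} B_a =: B_{a+1}$. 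Since $B_{a+1}$ lies in the same $\AW_e$-orbit, Proposition \ref{P:moving-vector-Weyl-orbit} gives $\mv(B_{a+1}) = \mv(B_a)$. Hence $m_{B_{a+1}} = m_B > 0$ and $\II_{B_{a+1}} = \II_B$, so the procedure can be iterated with the same parameters.

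The main obstacle is termination, and for this we need a strictly decreasing nonnegative quantity. The proof of Proposition \ref{P:same-residue-pyramid} gives exactly the right control. Under a move $B \xrightarrow{j} C$, we have $\dd_C(a,b) = \dd_B(a,b)$ except for the single pair $(a,b) = (\sigma_B^{-1}(j), \sigma_B^{-1}(j+1))$, where $\dd$ drops by $1$; in particular $a<b$ there. Note that $\dd_B(a,b)$ is indexed by positions, and $\sigma_C = \overline{\cs_j}\sigma_B$ by Lemma \ref{L:cs_j}(2), so this comparison is position-by-position as the proposition computes. So I take the potential
\[
\Phi(B) := \sum_{1\le a<b\le e} \dd_B(a,b),
\]
which is a nonnegative integer by Lemma \ref{L:pyramid-core} (its proof shows $\dd_B(a,b)\geq 0$). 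Each move decreases $\Phi$ by exactly $1$. Therefore after at most $\Phi(B)$ moves we must reach a block at which no $j$ satisfies the inequality above, which is a Scopes-initial block $B_k$.

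One detail needs care: the computation in Proposition \ref{P:same-residue-pyramid} uses the hypothesis $y^B_{j+1} - y^B_j - \delta_{j0} \geq m_B > 0$ (this is what makes Lemma \ref{L:cs_j}(2) applicable). This inequality follows from Theorem \ref{T:equiv1}(2) because the indicator is at most $1$, so it holds at every step. Finally, $B$ and $B_k$ are Scopes equivalent directly from the definition, since consecutive terms of the sequence are related by $\xrightarrow{j_a}$.
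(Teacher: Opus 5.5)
Your proof is correct, but it proves termination differently from the paper.

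\textbf{The paper's route.} It inducts on $n$. When $B$ is not Scopes-initial, it picks $j$ as in the definition. Using $y^B_{j+1}-y^B_j-\delta_{j0}\ge m_B\ge 1$ and $|z^B_{j+1}-z^B_j|<\ell$, it shows
$\hub_j(B)=x^B_{j+1}-x^B_j-\delta_{j0}\ell=(y^B_{j+1}-y^B_j-\delta_{j0})\ell+(z^B_{j+1}-z^B_j)>0$.
So $\cs_j\DDot{}B$ is a block of $\HH_{n-\hub_j(B)}$, of strictly smaller rank.

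\textbf{Your route.} You take the potential $\Phi(B)=\sum_{a<b}\dd_B(a,b)$ and use the exact change formula extracted from the proof of Proposition~\ref{P:same-residue-pyramid}. I checked that formula, including the $j=0$ case. There the only affected pair with $a<b$ is $\sigma_B(a)=e$, $\sigma_B(b)=1$, since $y^B_1>y^B_e+1$ forces $e\prec_{\by^B}1$. Your observation that $m_B>0$ is what makes Lemma~\ref{L:cs_j}(2) applicable is the same place where the paper needs $m_B>0$ to get $\hub_j(B)>0$.

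\textbf{Comparison.} The paper's argument is shorter and needs only the hub formula in terms of $\br^*_B$. Yours stays entirely within the combinatorics of $(\by^B,\bz^B)$ and gives sharper information. Each descending Scopes move lowers $\Phi$ by exactly one, so $k=\Phi(B)-\Phi(B_k)$. The proof of Theorem~\ref{T:unique-initial} shows that for a Scopes-initial block, $\dd$ is determined by the pyramid numbers and the Scopes vector, and these are preserved along the chain. Hence the length of any such descending chain from $B$ is independent of the choices of the $j_a$, which the induction on $n$ does not reveal.
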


\begin{proof}
We prove by induction on $n$.
If $B$ is Scopes-initial, then $k=0$ and there is nothing to prove.
If $B$ is not Scopes-initial,
then there exists $j \in \ZZ/e\ZZ$ such that
\begin{equation} \label{E:cs_j-Scopes}
y^B_{j+1} - y^B_j - \delta_{j0} + \bbone_{z^B_{j} \leq \Ht_{\II_B}(z^B_{j+1})} \geq m_B + 1,
\end{equation}
so that $B \xrightarrow{j} \cs_j \DDot{} B$, 
and
$$
\hub_j(B) = x^B_{j+1} - x^B_j - \delta_{j0} \ell
= (y^B_{j+1} - y^B_j - \delta_{j0})\ell + (z^B_{j+1} - z^B_j)
>0,
$$
since $y^B_{j+1} - y^B_j - \delta_{j0} \geq m_B > 0$ from \eqref{E:cs_j-Scopes} and $|z^B_{j+1} - z^B_j| < \ell$.
Let 
$B_1 = \cs_j \DDot{} B$.
Since $\hub_j(B) >0$, the block $B_1 = \cs_j \DDot{} B$ lies in $\HH_{n-\hub_j(B)}$ which has strictly smaller rank.  Consequently, by induction we have the desired sequence of blocks for $B_1$ and hence for $B$.
\end{proof}

\begin{prop} \label{P:unique}
Let $B$ be a block of $\HH_n$ with $m_B > 0$.  There is at most one Scopes-initial block in $\AW_e \DDot{} B$ with the same Scopes vector and the same pyramid numbers as $B$.
\end{prop}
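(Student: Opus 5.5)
The plan is to show that a Scopes-initial block $C \in \AW_e \DDot{} B$ can be rebuilt from its Scopes vector and pyramid numbers. By Proposition \ref{P:moving-vector-Weyl-orbit}, every block $C \in \AW_e\DDot{}B$ has $\mv(C)=\mv(B)$. Hence $m_C=m_B=:m$, $\II_C=\II_B=:\II$ and $\wt(C)=\wt(B)$, so $C$ is determined by its core, i.e.\ by $\br^*_C=\by^C\ell+\bz^C$ (Theorem \ref{T:Naka} and \eqref{E:bu}). By Lemma \ref{L:cs_j}(1), $\br^*_C=w\CDot{\ell}\br^*_B$ for some $w\in\AW_e$. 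By \eqref{E:affineWeylgroup} and \eqref{E:CDot}, this gives $|\br^*_C|=|\br^*_B|$, and by Corollary \ref{C:w-z}, $\bz^C$ is a rearrangement of $\bz^B$; hence $|\by^C|=|\by^B|$. So it suffices to show that, for Scopes-initial $C$, the data $\bz':=(\bz^C)^{\sigma_C}$ and $\pi_C$ determine $\sigma_C$ and the sorted vector $\tilde\by:=(\by^C)^{\sigma_C}$. Indeed, then $\bz^C=(\bz')^{\sigma_C^{-1}}$, $\by^C=\tilde\by^{\sigma_C^{-1}}$, and $\br^*_C$ is fixed.

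The key step is to prove that a Scopes-initial block satisfies $\dd_C(a,b)\le m$ for all consecutive pairs $b=a+1$. In fact I would aim for the stronger bound $\dd_C(a,b)\le m$ for every pair $a<b$ such that $\sigma_C(a),\sigma_C(a)+1,\dotsc,\sigma_C(b)$ (read cyclically) is a path whose intermediate runners all have sorted position outside $[a,b]$; the consecutive case is the one needed. Once this holds, $\pi_C(a,a+1)=\dd_C(a,a+1)$ exactly. Since $\bz'$ and $\II$ are known, the term $\bbone_{z'_a\le \Ht_\II(z'_{a+1})}$ is known. Thus $\pi_C(a,a+1)$ gives us $\tilde y_{a+1}-\tilde y_a-\bbone_{\sigma_C(a)>\sigma_C(a+1)}$. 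Here $\tilde y_{a+1}-\tilde y_a\ge0$, with $\tilde y_{a+1}=\tilde y_a$ forcing $\sigma_C(a)<\sigma_C(a+1)$.

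I would then recover $\sigma_C$ using the remaining pyramid numbers $\pi_C(a,b)$ for $b>a+1$. Telescoping the consecutive differences expresses $\dd_C(a,b)$ as a known quantity minus $\bbone_{\sigma_C(a)>\sigma_C(b)}$ plus a sum of known inversion indicators. When this total stays below $m$, the value $\pi_C(a,b)$ reads off $\bbone_{\sigma_C(a)>\sigma_C(b)}$, so the inversion set of $\sigma_C$ (and hence $\sigma_C$) is determined. The same telescoping then fixes $\tilde\by$ up to an additive constant, and $|\tilde\by|=|\by^B|$ pins down that constant.

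The main obstacle is the inequality $\dd_C(a,a+1)\le m$, and more generally controlling the pairs where $\pi$ saturates at $m$ so that no information is lost. The approach I would try first is to combine the defining inequalities of Definition \ref{D:Scopes-initial} with Remark \ref{R:initial}. Remark \ref{R:initial} says the cyclic steps $j\to j+1$ that go backwards in sorted order contribute at most $1$, so only the forward steps are constrained by Scopes-initiality. Together with the identity $\sum_{j\in\ZZ/e\ZZ}(y_{j+1}-y_j-\delta_{j0})=-1$, this should let me walk around the cycle of runners and bound the gap between sorted neighbours by a single forward step, or by a backward step of size at most $1\le m$. I would handle the Ht-indicator terms with the monotonicity of $\Ht_\II$, as in the proof of Theorem \ref{T:equiv1}. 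For the saturated pairs in the recovery of $\sigma_C$, I would argue by induction on $e$, removing the runner $\sigma_C(e)$ of largest $y$.
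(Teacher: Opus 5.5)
Your reduction to showing that $\br^*_C$ is determined is fine, and so is your key bound $\dd_C(a,a+1)\le m$. The cyclic walk from $\sigma_C(a)$ to $\sigma_C(a+1)$ contains a forward step $j\to j+1$ from sorted position $\le a$ to sorted position $\ge a+1$. Lemma~\ref{L:delta-basic}(2) then bounds $\dd_C(a,a+1)$ by $\delta(j,j+1)\le m$. This is the paper's Proposition~\ref{P:initial}(1); the identity $\sum_j(y_{j+1}-y_j-\delta_{j0})=-1$ is not needed.

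\textbf{The gap: $\pi_C$ and the Scopes vector do not determine $\sigma_C$.} Nor do they determine its inversion set. By Proposition~\ref{P:invariant} and its proof, the map $\bx\mapsto\rr_e\Be_e\CDot{\ell}\bx$ preserves $\dd^I_{\bx}$, $(\bz^{\bx})^{\sigma_{\by^{\bx}}}$ and $(I,m)$-initiality. It also replaces $\sigma_{\by^{\bx}}$ by $\rr_e\sigma_{\by^{\bx}}$ and adds $1$ to the sorted $y$-vector in the slot of runner $e$.

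Concretely, take $e=2$, $\bx=(0,0)$ and $\bx'=(\ell,0)$. Both have $\dd(1,2)=1$ and Scopes vector $(0,0)$, and both satisfy the inequalities of Definition~\ref{D:Scopes-initial} for every $m\ge1$. Yet $\sigma_{\by^{\bx}}=1$ while $\sigma_{\by^{\bx'}}=\cs_1$.

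So your step ``$\pi_C(a,b)$ reads off $\bbone_{\sigma_C(a)>\sigma_C(b)}$'' cannot work. The descent indicators $\bbone_{\sigma_C(c)>\sigma_C(c+1)}$ in your telescoping are not known. The combination that actually occurs is $\sum_{c=a}^{b-1}\bbone_{\sigma_C(c)>\sigma_C(c+1)}-\bbone_{\sigma_C(a)>\sigma_C(b)}$, and it is rotation-invariant. For $b=a+2$ it equals $\bbone_{\sigma_C(a+2)\in\ft{\sigma_C(a)}{\sigma_C(a+1)}}$, by Lemmas~\ref{L:delta-basic}(2) and~\ref{L:ee}. For the same reason, the leftover freedom in $\tilde\by$ is not an additive constant.

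\textbf{The missing idea: use $|\br^*_C|$ to fix the rotation.} From $\pi_C$ and $\bz'$ you can only extract the cyclic betweenness data $T_{\sigma_C}$. These data fix $\sigma_C$ only up to left multiplication by powers of $\rr_e$ (Corollary~\ref{C:ee} and Lemma~\ref{L:ee}(1,2)). The rotation, together with the overall level, must come from $|\br^*_C|$.

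The paper does this in Proposition~\ref{P:unique-x}. It normalises by $(\rr_e\Be_e)^{-se-r}$ so that $\sigma(1)=1$ and the smallest sorted $y$ is $0$. It recovers the normalised vector from $\dd$, $\bz'$ and $T_\sigma$. It then reads off $r$ and $s$ as the remainder and quotient of $(|\bx|-|\bx'|)/\ell$ on division by $e$. In the $e=2$ example above, it is $|\by^C|$ modulo $2$ that selects $\sigma_C$, not any pyramid number.

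\textbf{Saturation is also left open.} You appeal to ``when this total stays below $m$'' and to an unspecified induction deleting the runner of largest $y$. Neither guarantees that the pyramid numbers you consult are unsaturated.

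The paper's device is Proposition~\ref{P:initial}(2). Suppose $a<b<c$ and $\sigma(j)\notin\ft{\sigma(a)}{\sigma(b)}$ for all $j\in[a+1,c-1]$. Then $\sigma(c)\in\ft{\sigma(a)}{\sigma(b)}$ if and only if $\pi(a,c)-\dd(a,b)-\dd(b,c)+\ff^{\bz',\II}_{a,b,c}>0$. In the positive case, part~(1) forces $\dd(a,c)\le m$, so there is no saturation. In the negative case, the expression is $\le 0$ whether or not $\pi(a,c)$ saturates.

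The induction in Theorem~\ref{T:unique-initial} orders the earlier runners by $|\ft{\sigma(a_i)}{\sigma(k-1)}|$. It pairs $a_i$ with $a_M$ precisely so that this hypothesis holds. Some such device is needed for your recovery step to go through.
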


We defer the technical combinatorial proof of Proposition \ref{P:unique} to the next subsection and first derive its consequences.

\begin{thm} \label{T:Scopes}
Let $B$ be a block of $\HH_n$ with $m_B > 0$, and let $C \in \AW_e \DDot{} B$.  Then $B$ and $C$ are Scopes equivalent if and only if they have the same Scopes vector and the same pyramid numbers,
in which case the Specht module $\Sp^{\bl}$ lying in $B$ corresponds to the Specht module $\Sp^{\bm}$ lying in $C$ if and only if $(\bl^*)^{\sigma_B} = (\bm^*)^{\sigma_C}$, where
$\bl^*,\bm^* \in \PP^e$ satisfy $\bij_{\ell,e}((\bl;\br)^{w_0}) = (\bl^*,\br^*_B)$ and $\bij_{\ell,e}((\bm;\br)^{w_0}) = (\bm^*,\br^*_C)$ respectively.
\end{thm}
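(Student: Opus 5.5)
The forward direction is Corollary \ref{C:Scopes}, so for the equivalence only the converse needs an argument. Suppose $B$ and $C$ lie in the same $\AW_e$-orbit and have the same Scopes vector and the same pyramid numbers. Since they are in the same orbit, $m_B = m_C > 0$ by Proposition \ref{P:moving-vector-Weyl-orbit}, so $C$ is also non-core. Apply Proposition \ref{P:Scopes} to both blocks. This gives Scopes-initial blocks $B'$ and $C'$ with $B \sim B'$ and $C \sim C'$. By Corollary \ref{C:Scopes}, $B'$ has the same Scopes vector and pyramid numbers as $B$, and $C'$ has the same as $C$. Both $B'$ and $C'$ lie in $\AW_e \DDot{} B$, because Scopes equivalence classes sit inside $\AW_e$-orbits. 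Proposition \ref{P:unique} then forces $B' = C'$, and hence $B \sim C$.

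For the correspondence of Specht modules, first look at a single step $B \xrightarrow{j} D$ with $D = \cs_j \DDot{} B$. By Theorem \ref{T:CR}, $\Sp^{\bl}$ corresponds to $\Sp^{\cs_j \DDot{\br}\bl}$. By Lemma \ref{L:cs_j}(1), the dual multipartition of $\cs_j \DDot{\br}\bl$ is $(\bl^*)^{\overline{\cs_j}}$. In the proof of Proposition \ref{P:same-residue-pyramid} we saw that $\sigma_D = \overline{\cs_j}\sigma_B$, using Lemma \ref{L:cs_j}(2) together with $y^B_{j+1} > y^B_j + \delta_{j0}$, which follows from Theorem \ref{T:equiv1}. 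Therefore
\[
((\bl^*)^{\overline{\cs_j}})^{\sigma_D} = (\bl^*)^{\overline{\cs_j}\,\overline{\cs_j}\sigma_B} = (\bl^*)^{\sigma_B},
\]
so the quantity $(\bl^*)^{\sigma_B}$ is preserved along each step. Steps taken in the reverse direction preserve it by symmetry. Composing along any chain from $B$ to $C$ shows that the Morita equivalence obtained by composing the equivalences of Theorem \ref{T:CR} sends $\Sp^{\bl}$ to the $\Sp^{\bm}$ with $(\bm^*)^{\sigma_C} = (\bl^*)^{\sigma_B}$.

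It remains to check that this matching is a bijection, so that the ``if and only if'' makes sense. The map $\bl \mapsto \bl^*$ is injective on $\ell$-partitions lying in $B$, because $\bij_{\ell,e}$ is a bijection and $\br^*_B$ is fixed. Applying a fixed place permutation is also injective. Hence for each $\bl$ in $B$ there is at most one $\bm$ in $C$ with $(\bm^*)^{\sigma_C} = (\bl^*)^{\sigma_B}$, and the chain argument above shows it exists.

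The main obstacle is not in this theorem but in Proposition \ref{P:unique}, which is deferred. The only delicate point here is the identity $\sigma_D = \overline{\cs_j}\sigma_B$, which relies on the strict inequality supplied by $m_B > 0$. This is exactly why core blocks are excluded.
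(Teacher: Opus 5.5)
Your proposal is correct and follows the paper's proof essentially step for step: Corollary~\ref{C:Scopes} gives the forward direction, Propositions~\ref{P:Scopes} and~\ref{P:unique} reduce both blocks to a common Scopes-initial block, and Lemma~\ref{L:cs_j} tracks $\sigma_B$ and $\bl^*$ along each step of a chain. Your explicit injectivity argument for the ``if'' half of the Specht correspondence spells out a detail the paper leaves implicit; your closing remark slightly overstates matters, because $m_B>0$ is also needed in Proposition~\ref{P:Scopes} (to get $\hub_j(B)>0$) and in the definitions of Scopes-initial blocks and the Scopes vector, not only for $\sigma_D=\overline{\cs_j}\sigma_B$.
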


\begin{proof}
  We have already seen in Corollary \ref{C:Scopes} that if $B$ and $C$ are Scopes equivalent, then they have the same Scopes vector and the same pyramid numbers.

  Suppose then that $B$ and $C$ have the same Scopes vector and the same pyramid numbers.
  Then $B$ (resp.\ $C$) is Scopes equivalent to a Scopes-initial block $B_0$ (resp.\ $C_0$) by Proposition \ref{P:Scopes}.
  Now $B_0$ and $C_0$ share the same Scopes vector and the same pyramid numbers as $B$ and $C$ by Corollary \ref{C:Scopes},
  so that $B_0 = C_0$ by Proposition \ref{P:unique}.
  Thus $B$ and $C$ lie in the same Scopes equivalence class.
  
  We now show the correspondence between the Specht modules when $B$ and $C$ are Scopes equivalent.
  Let $B_0, \dotsc, B_k$ be a sequence of blocks such that $B_0 = B$, $B_k = C$, and for all $a \in [1,\, k]$, $B_{a} = \cs_{j_a} \DDot{} B_{a-1}$ and either all partitions lying in $B_{a-1}$ have no addable nodes with $(e,\br)$-residue $j_a$ or all partitions lying in $B_{a}$ have no addable nodes with $(e,\br)$-residue $j_a$.
  Then 
  $$\sigma_C = \sigma_{\by^C} = \sigma_{\smash[t]{\cs_{j_k} \CDot{1} ( \dotsb \CDot{1} (\cs_{j_1} \CDot{1} \by^B) \dotsm )}}
  = \overline{\cs_{j_k}} \dotsm \overline{\cs_{j_1}}\sigma_{\by^B} = \overline{\cs_{j_k}} \dotsm \overline{\cs_{j_1}}\sigma_B
  $$  
  by Lemma \ref{L:cs_j}.
  When $\bl \in \PP^{\ell}$ lying in $B$ corresponds to $\bm \in \PP^{\ell}$ lying in $C$ under this chain of blocks, we have $\bm = \cs_{j_k} \DDot{r} (\cs_{j_{k-1}} \DDot{\br} (\dotsm \DDot{\br} (\cs_{j_1} \DDot{\br} \bl)\dotsm))$ by Theorem \ref{T:CR}. Thus, if $\bij_{\ell,e}((\bl;\br)^{w_0}) = (\bl^*, \br^*_B)$ and $\bij_{\ell,e}((\bm;\br)^{w_0}) = (\bm^*, \br^*_C)$, we have
  $$
  \bm^* = (\bl^*)^{\overline{\cs_{j_1}} \dotsm \overline{\cdot \cs_{j_k}}}$$
  by Lemma \ref{L:cs_j}(1).
  Consequently,
  \begin{align*}
  (\bm^*)^{\sigma_C} = ((\bl^*)^{\overline{\cs_{j_1}} \dotsm \overline{\cdot \cs_{j_k}}})^{\overline{\cs_{j_k}} \dotsm \overline{\cs_{j_1}}\sigma_B} = (\bl^*)^{\sigma_B}
  \end{align*}
  as desired.
  \end{proof}

\begin{cor} \label{C: }
Let $B$ be a block of $\HH_n$ and $C$ be a block of $\HH_m$.  Then $B$ and $C$ are Scopes equivalent if and only if $B$ and $C$ have the same moving vector, the same Scopes vector and the same pyramid numbers.  
\end{cor}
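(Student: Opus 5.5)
We prove Corollary \ref{C: } by splitting according to whether the blocks are core blocks, reducing in each case to results already available.

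\emph{Necessity.} Suppose $B$ and $C$ are Scopes equivalent. Each generating relation $B_{a-1} \xrightarrow{j_a} B_a$ has $B_a = \cs_{j_a} \DDot{} B_{a-1}$. Hence $C \in \AW_e \DDot{} B$, and $B$, $C$ have the same moving vector by Proposition \ref{P:moving-vector-Weyl-orbit}. In particular $m_B = m_C$, so $B$ and $C$ are either both core blocks or both non-core.
\begin{itemize}
\item[] If both are non-core, Corollary \ref{C:Scopes} shows that they have the same Scopes vector and the same pyramid numbers.
\item[] If both are core blocks, \cite[Theorem 4.24]{LQT} gives equality of the (core-block) Scopes vectors. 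The pyramid numbers are all $0$ for both by Lemma \ref{L:pyramid-core}, so they agree trivially.
\end{itemize}

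\emph{Sufficiency.} Suppose $B$ and $C$ have the same moving vector, Scopes vector and pyramid numbers.
\begin{itemize}
\item[] The common moving vector implies $C = w \DDot{} B$ for some $w \in \AW_e$, again by Proposition \ref{P:moving-vector-Weyl-orbit}.
\item[] The moving vector determines $m_B$ as its minimum, so $m_B = m_C$.
\item[] If $m_B > 0$, Theorem \ref{T:Scopes} applies directly and gives Scopes equivalence.
\item[] If $m_B = 0$, both are core blocks in the same $\AW_e$-orbit with the same Scopes vector, and \cite[Theorem 4.24]{LQT} gives Scopes equivalence.
\end{itemize}

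The one point needing care is the normalization of the moving vector. By Lemma \ref{L:mv-cyclic permutation} it is only defined up to cyclic rotation, and Proposition \ref{P:moving-vector-Weyl-orbit} is stated for a fixed $\br' \in \br^{\EW_\ell}\cap\AAbar$. So ``same moving vector'' should be read as equality of $\mv^{\br^{w_0}}$ for the fixed $w_0$ chosen above, and all the invariants $\sigma_B$, $\bz^B$, $\pi_B$ are computed with respect to this same $w_0$.

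We should also check that the two notions of Scopes vector are consistent. The core-block Scopes vector of \cite[Definition 4.14]{LQT} is used only when $m_B = 0$, and Definition \ref{D:pyramid}(2) only when $m_B > 0$. Since $m_B$ is determined by the moving vector, there is no ambiguity in which definition applies.

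The main (mild) obstacle is confirming that the conventions of \cite{LQT} for core blocks match the normalization by $w_0$ with $m_\ell$ minimal. If they do not match directly, we would transport the statement of \cite[Theorem 4.24]{LQT} along the cyclic rotation of Lemma \ref{L:mv-cyclic permutation}.
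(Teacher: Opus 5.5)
Your proposal is correct and follows the same route as the paper. Both arguments use Proposition~\ref{P:moving-vector-Weyl-orbit} to place the two blocks in a single $\AW_e$-orbit (so $m_B = m_C$), then settle the core case via Lemma~\ref{L:pyramid-core} and \cite[Theorem 4.24]{LQT}, and the non-core case via Theorem~\ref{T:Scopes} together with Corollary~\ref{C:Scopes}; your normalization caveat simply restates the paper's standing choice of $w_0$ with $m_\ell$ minimal.
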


\begin{proof}
Clearly, two Scopes equivalent blocks necessarily lie in the same $\AW_e$-orbit which is classified by their moving vectors by Proposition \ref{P:moving-vector-Weyl-orbit}.

If $m_B\, (= m_C) = 0$, then since all such blocks have all pyramid numbers equal to $0$ by Lemma \ref{L:pyramid-core}, the theorem follows from \cite[Theorem 4.24]{LQT}.

On the other hand, if $m_B\, (= m_C) >0$, the corollary follows from Theorem \ref{T:Scopes}.
\end{proof}

\subsection{Proof of Proposition \ref{P:unique}}

The goal of this subsection is to show that a Scopes-initial block is uniquely determined by its Scopes vector and pyramid numbers.

We first extend Definitions \ref{D:}(3), \ref{D:pyramid} and \ref{D:Scopes-initial} to a more general setting.

\begin{Def}
Let $\bx = (x_1,\dotsc, x_e) \in \ZZ^e$,  $I \subseteq \ZZ/\ell\ZZ$ with $0 \in I$, and $m \in \ZZ^+$.
\begin{enumerate}
\item Define $\by^{\bx} = (y^{\bx}_1, \dotsc, y^{\bx}_e) \in \ZZ^e$ and $\bz^{\bx} = (z^{\bx}_1,\dotsc, z^{\bx}_e) \in (\ZZ/\ell\ZZ)^e$ by
$$
\bx := \by^{\bx} \ell + \bz^{\bx}.
$$

\item Define, for $a,b \in [1,\,e]$ with $\sigma_{\by^{\bx}}^{-1}(a) < \sigma_{\by^{\bx}}^{-1}(b)$,
\begin{align*}
\delta_{\bx}^I(a,b) &:= y^{\bx}_{b} - y^{\bx}_{a} - \bbone_{a > b} + \bbone_{z^{\bx}_a \leq \Ht_I(z^{\bx}_b)}.
\end{align*}

\item Define for $a,b \in [1,\,e]$ with $a  < b$
\begin{align*}
\dd_{\bx}^I (a,b) &:= \delta_{\bx}^I(\sigma_{\by^{\bx}}(a),\sigma_{\by^{\bx}}(b)) \\
&\ =  y^{\bx}_{\sigma_{\by^{\bx}}(b)} - y^{\bx}_{\sigma_{\by^{\bx}}(a)} - \bbone_{\sigma_{\by^{\bx}}(a) >\sigma_{\by^{\bx}}(b)} + \bbone_{z^{\bx}_{\sigma_{\by^{\bx}}(a)} \leq \Ht_I(z^{\bx}_{\sigma_{\by^{\bx}}(b)})}; \\
\pi_{\bx}^{I,m} (a,b) &:= \min(\dd_{\bx}^I(a,b),m).
\end{align*}

\item We say that $\bx$ is {\em $(I,m)$-initial} if $\delta_{\bx}^I(j, \rr_e(j)) \leq m$ for all $j \in [1,\,e]$ with  $\sigma_{\by^{\bx}}^{-1}(j) < \sigma_{\by^{\bx}}^{-1}(\rr_e(j))$.
\end{enumerate}
\end{Def}

Given $\ba = (a_1,\dotsc, a_e) \in \ZZ^e$ and $i, j \in [1,\,e]$, write $i \preceq_{\ba} j$ for $\sigma_{\ba}^{-1}(i) \leq \sigma_{\ba}^{-1}(j)$. Then $\preceq_{\ba}$ is a total order $[1,\,e]$, 
and
$$
i \preceq_{\ba} j 
\quad \Leftrightarrow \quad \sigma_{\ba}^{-1}(i) \leq \sigma_{\ba}^{-1}(j)
\quad \Leftrightarrow \quad a_i \leq a_j \text{ with equality only if } i \leq j.
$$

Let $k \in \ZZ$. Recall the left action $\CDot{k}$ of $\EW_e$ on $\ZZ^e$ as described in Subsection \ref{SS:Weyl}.

\begin{lem} \label{L:sigma}
Let $\ba = (a_1,\dotsc, a_e) \in \ZZ^e$, and
let $\ba' = (\rr_e \Be_e) \CDot{1} \ba$ where $\rr_e = (1,2,\dotsc, e) \in \sym{e}$.  Then:
\begin{enumerate}
\item $i \prec_{\ba} j \Leftrightarrow \rr_e(i) \prec_{\ba'} \rr_e(j)$;
\item $\sigma_{\ba'} = \rr_e \sigma_{\ba}$;
\end{enumerate}
\end{lem}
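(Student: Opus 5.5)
The plan is to first compute $\ba'$ explicitly, then check (1) by a short case analysis, and finally deduce (2) from (1) using the characterisation of $\sigma_{\ba}$ as the permutation listing $[1,\,e]$ in $\preceq_{\ba}$-increasing order.

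\emph{Step 1: an explicit formula for $\ba'$.} By \eqref{E:CDot} with $\sigma=\rr_e$, $\bs=\Be_e$ and $k=1$, we get $\ba' = (\ba+\Be_e)^{\rr_e^{-1}}$. By the definition of the right place permutation action, this gives $a'_{\rr_e(j)} = a_j + \delta_{je}$ for all $j\in[1,\,e]$. Explicitly, $a'_{j+1}=a_j$ for $j<e$, and $a'_1 = a_e+1$.

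\emph{Step 2: part (1).} Recall that $i \prec_{\ba} j$ means $a_i<a_j$, or $a_i=a_j$ with $i<j$. Take $i\neq j$ and split into three cases.
\begin{itemize}
\item If neither $i$ nor $j$ equals $e$, then $a'_{\rr_e(i)}=a_i$ and $a'_{\rr_e(j)}=a_j$. Moreover $\rr_e(i)=i+1$ and $\rr_e(j)=j+1$ are in the same relative order as $i$ and $j$, so the equivalence is immediate.
\item If $i=e$, then $e\prec_{\ba} j$ is equivalent to $a_e<a_j$, since a tie would require $e<j$, which is impossible. This is equivalent to $a'_1=a_e+1\le a_j=a'_{j+1}$. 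Since ties at this level are broken by $1<j+1$, which always holds, this is exactly $\rr_e(e)=1\prec_{\ba'} \rr_e(j)$.
\item If $j=e$, then $i\prec_{\ba} e$ is equivalent to $a_i\le a_e$. This is equivalent to $a'_{i+1}<a_e+1=a'_1$. Since a tie here would need $i+1<1$, which is impossible, this is exactly $\rr_e(i)\prec_{\ba'}\rr_e(e)$.
\end{itemize}

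\emph{Step 3: part (2).} By Definition \ref{D:}(1), $\sigma_{\ba}$ is the unique permutation with $\sigma_{\ba}(1)\prec_{\ba}\sigma_{\ba}(2)\prec_{\ba}\dotsb\prec_{\ba}\sigma_{\ba}(e)$. Applying (1) to this chain gives $\rr_e\sigma_{\ba}(1)\prec_{\ba'}\dotsb\prec_{\ba'}\rr_e\sigma_{\ba}(e)$. By uniqueness of the ordering permutation for $\ba'$, it follows that $\sigma_{\ba'}=\rr_e\sigma_{\ba}$.

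The only point needing care is Step 1: getting the direction of the place permutation right (whether the action uses $\rr_e$ or $\rr_e^{-1}$). Everything afterwards is routine tie-breaking bookkeeping.
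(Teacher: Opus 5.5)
Your proposal is correct and follows the paper's proof essentially step for step. The paper also derives $a'_{\rr_e(j)} = a_j + \delta_{je}$ from \eqref{E:CDot} and checks part (1) by the same three cases ($i=e$, $j=e$, neither). It then deduces part (2) by applying (1) to the chain $\sigma_{\ba}(1)\prec_{\ba}\dotsb\prec_{\ba}\sigma_{\ba}(e)$ and noting that $\sigma_{\ba'}^{-1}\rr_e\sigma_{\ba}$ is order-preserving, hence the identity, which is your uniqueness argument phrased slightly differently.
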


\begin{proof}
Note that
$\ba' = (\rr_e \Be_e) \CDot{1} \ba = (\Be_e + \ba)^{\rr_e^{-1}} = \ba^{\rr_e^{-1}} + \Be_1$ by \eqref{E:CDot},
so that $a'_{\rr_e(j)} = a_j + \delta_{je}$ for all $j \in [1,\,e]$.
\begin{enumerate}
\item
We have
\begin{align*}
i \prec_{\ba} j &\Leftrightarrow a_i < a_j\ \vee\ (a_i = a_j\ \wedge\ i < j) \\
& \Leftrightarrow a'_{\rr_e(i)} - \delta_{ie} < a'_{\rr_e(j)} - \delta_{je}\ \vee\ (a'_{\rr_e(i)} - \delta_{ie} = a'_{\rr_e(j)} - \delta_{je}\ \wedge\ i < j) \\
&\Leftrightarrow
\begin{cases}
a'_1 -1 < a'_{\rr_e(j)}, &\text{if } i = e > j; \\
a'_{\rr_e(i)} \leq a'_1-1, &\text{if } j = e > i; \\
a'_{\rr_e(i)} < a'_{\rr_e(j)}\ \vee\ (a'_{\rr_e(i)} = a'_{\rr_e(j)}\  \wedge\ i < j),
&\text{if } i, j \ne e.
\end{cases} \\
&\Leftrightarrow
a'_{\rr_e(i)} < a'_{\rr_e(j)}\ \vee\ (a'_{\rr_e(i)} = a'_{\rr_e(j)}\ \wedge\ \rr_e(i) < \rr_e(j)) \\
&\Leftrightarrow
\rr_e(i) \prec_{\ba'} \rr_e(j).
\end{align*}

\item We have
$$
\sigma_{\ba}(1) \prec_{\ba} \sigma_{\ba}(2) \prec_{\ba} \dotsb \prec_{\ba} \sigma_{\ba}(e)$$ so that
$$
\rr_e\sigma_{\ba}(1) \prec_{\ba'} \rr_e\sigma_{\ba}(2) \prec_{\ba'} \dotsb \prec_{\ba'} \rr_e\sigma_{\ba}(e)
$$
by part (1).  Thus 
$$\sigma_{\ba'}^{-1} \rr_e\sigma_{\ba}(1) < \sigma_{\ba'}^{-1} \rr_e\sigma_{\ba}(2) < \dotsb < \sigma_{\ba'}^{-1} \rr_e\sigma_{\ba}(e).$$
Consequently, $\sigma_{\ba'}^{-1} \rr_e\sigma_{\ba} \in \sym{e}$ preserves the natural order of $\{ 1,2,\dotsc, e \}$, so that $\sigma_{\ba'}^{-1} \rr_e\sigma_{\ba} = 1_{\sym{e}}$, as desired.
\end{enumerate}
\end{proof}

\begin{prop} \label{P:invariant}
Let $\bx \in \ZZ^e$ and $I \subseteq \ZZ/\ell\ZZ$ with $0 \in I$.
Let $\bx' = \rr_e\Be_{e}\CDot{\ell} \bx$.
Then $\dd^I_{\bx'}(a,b) = \dd^I_{\bx}(a,b)$ for all $a,b \in [1,\,e]$ with $a< b$.
\end{prop}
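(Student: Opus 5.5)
The plan is to reduce the statement to a term-by-term comparison of the quantities $\delta^I$ for $\bx$ and $\bx'$, using Lemma \ref{L:sigma} to control how $\sigma_{\by^{\bx}}$ changes.

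First, I will compute $\bx'$ explicitly. By \eqref{E:CDot}, $\bx' = (\rr_e\Be_e)\CDot{\ell}\bx = (\ell\Be_e + \bx)^{\rr_e^{-1}}$. Hence
$$x'_{\rr_e(j)} = x_j + \delta_{je}\ell \qquad \text{for all } j \in [1,\,e].$$
Since adding $\ell$ does not change the residue modulo $\ell$, this gives
$$y^{\bx'}_{\rr_e(j)} = y^{\bx}_j + \delta_{je} \qquad\text{and}\qquad z^{\bx'}_{\rr_e(j)} = z^{\bx}_j.$$
In other words, $\by^{\bx'} = (\rr_e\Be_e)\CDot{1}\by^{\bx}$ and $\bz^{\bx'} = (\bz^{\bx})^{\rr_e^{-1}}$. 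Lemma \ref{L:sigma}(2), applied with $\ba = \by^{\bx}$, then gives $\sigma_{\by^{\bx'}} = \rr_e\sigma_{\by^{\bx}}$.

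Consequently, for $a<b$ and writing $i = \sigma_{\by^{\bx}}(a)$, $j = \sigma_{\by^{\bx}}(b)$, we have
$$\dd^I_{\bx'}(a,b) = \delta^I_{\bx'}(\rr_e(i),\rr_e(j)) \qquad\text{and}\qquad \dd^I_{\bx}(a,b) = \delta^I_{\bx}(i,j).$$
Here $i \prec_{\by^{\bx}} j$, and by Lemma \ref{L:sigma}(1) also $\rr_e(i) \prec_{\by^{\bx'}} \rr_e(j)$, so both sides are defined. It therefore suffices to show $\delta^I_{\bx'}(\rr_e(i),\rr_e(j)) = \delta^I_{\bx}(i,j)$ for all distinct $i,j$.

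I will do this by comparing the three constituents of $\delta^I$. The $\Ht_I$-indicator depends only on $z^{\bx}_i$ and $z^{\bx}_j$, which are unchanged, so it agrees on both sides. The difference of $y$'s changes by $\delta_{je} - \delta_{ie}$. The indicator $\bbone_{\rr_e(i) > \rr_e(j)}$ agrees with $\bbone_{i>j}$ unless one of $i,j$ equals $e$, since $\rr_e$ is order-preserving on $[1,\,e-1]$. It remains to check the two boundary cases.

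If $j = e$ and $i<e$, then $\rr_e(i) = i+1 > 1 = \rr_e(e)$. The $y$-difference gains $+1$ and the subtracted indicator goes from $0$ to $1$, so the changes cancel.

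If $i=e$ and $j<e$, then $\rr_e(e) = 1 < \rr_e(j)$. The $y$-difference changes by $-1$ and the subtracted indicator goes from $1$ to $0$, so again the changes cancel.

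The only mildly delicate step is the bookkeeping of $\sigma_{\by^{\bx'}}$, and this is handled entirely by Lemma \ref{L:sigma}. The remaining boundary-case check is routine.
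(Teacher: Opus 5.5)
Your proposal is correct and follows essentially the same route as the paper. Both arguments obtain $\by^{\bx'} = (\Be_e+\by^{\bx})^{\rr_e^{-1}}$ and $\bz^{\bx'} = (\bz^{\bx})^{\rr_e^{-1}}$ from \eqref{E:CDot}, use Lemma \ref{L:sigma}(2) to get $\sigma_{\by^{\bx'}} = \rr_e\sigma_{\by^{\bx}}$, and then check that the shift $\delta_{je}-\delta_{ie}$ in the $y$-difference is exactly cancelled by the change $\bbone_{\rr_e(i)>\rr_e(j)}-\bbone_{i>j}$ in the indicator. The paper merely phrases this comparison through the sorted vectors $(\by^{\bx'})^{\sigma_{\by^{\bx'}}}$ and $(\bz^{\bx'})^{\sigma_{\by^{\bx'}}}$ rather than through $\delta^I$ evaluated at the pair $(\rr_e(i),\rr_e(j))$.
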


\begin{proof}
Since $\rr_e\Be_{e}\CDot{\ell} \bx = \bx' = \by^{\bx'}\ell + \bz^{\bx'}$, we have
\begin{align*}
\by^{\bx'} &= \rr_e\Be_{e} \CDot{1} \by^{\bx} = (\Be_e + \by^{\bx})^{\rr_e^{-1}}, \\
\bz^{\bx'} &= \rr_e\Be_{e} \CDot{0} \bz^{\bx} = (\bz^{\bx})^{\rr_e^{-1}}
\end{align*}
by \eqref{E:CDot}.
Consequently, $\sigma_{\by^{\bx'}} = \rr_e \sigma_{\by^{\bx}}$ by Lemma \ref{L:sigma}(2), and
so
$
(\by^{\bx'})^{\sigma_{\by^{\bx\smash[t]{'}}}} = ( (\Be_e + \by^{\bx})^{\rr_e^{-1}} )^{\rr_e\sigma_{\by^{\bx}}} = (\Be_e + \by^{\bx})^{\sigma_{\by^{\bx}}},
$
i.e.\
$$
y^{\bx'}_{\sigma_{\by^{\bx\smash[t]{'}}}(a)} = y^{\bx}_{\sigma_{\by^{\bx}}(a)} + \delta_{\sigma_{\by^{\bx}}(a),e}
$$
for all $a \in [1,\,e]$.
Furthermore,
$(\bz^{\bx'})^{\sigma_{\by^{\bx\smash[t]{'}}}} = ((\bz^{\bx})^{\rr_e^{-1}})^{\rr_e\sigma_{\by^{\bx}}} = (\bz^{\bx})^{\sigma_{\by^{\bx}}}$, i.e.
$$
z^{\bx'}_{\sigma_{\by^{\bx\smash[t]{'}}}(a)} = z^{\bx}_{\sigma_{\by^{\bx}}(a)}
$$
for all $a \in [1,\,e]$.
Thus, for $a,b \in [1,\,e]$ with $a< b$, we have
\begin{align*}
\dd_{\bx'}^I(a,b) &= \delta_{\bx'}^I(\sigma_{\by^{\bx\smash[t]{'}}}(a), \sigma_{\by^{\bx\smash[t]{'}}}(b)) \\
&= y^{\bx'}_{\sigma_{{\by^{\bx\smash[t]{'}}}}(b)} - y^{\bx'}_{\sigma_{{\by^{\bx\smash[t]{'}}}}(a)} - \bbone_{\sigma_{\by^{\bx\smash[t]{'}}}(a) > \sigma_{\by^{\bx\smash[t]{'}}}(b)} +
\bbone_{z^{\bx'}_{\sigma_{\by^{\bx\smash[t]{'}}}(a)} \leq \Ht_I(z^{\bx'}_{\sigma_{\by^{\bx\smash[t]{'}}}(b)})} \\
&= y^{\bx}_{\sigma_{{\by^{\bx}}}(b)} - y^{\bx}_{\sigma_{{\by^{\bx}}}(a)} + (\delta_{\sigma_{\by^{\bx}}(b),e} - \delta_{\sigma_{\by^{\bx}}(a),e} - \bbone_{\rr_e\sigma_{\by^{\bx}}(a) > \rr_e\sigma_{\by^{\bx}}(b)}) +
\bbone_{z^{\bx}_{\sigma_{\by^{\bx}}(a)} \leq \Ht_I(z^{\bx}_{\sigma_{\by^{\bx}}(b)})}.
\end{align*}
Note that, when $a \ne b$,
\begin{align*}
\bbone_{\rr_e\sigma_{\by^{\bx}}(a) > \rr_e\sigma_{\by^{\bx}}(b)}
&=
\begin{cases}
0 &\text{if } \sigma_{\by^{\bx}}(a) = e \\
1 &\text{if } \sigma_{\by^{\bx}}(b) = e \\
\bbone_{\sigma_{\by^{\bx}}(a) > \sigma_{\by^{\bx}}(b)} &\text{if } \sigma_{\by^{\bx}}(a), \sigma_{\by^{\bx}}(b) \ne e
\end{cases} \\
&= \bbone_{\sigma_{\by^{\bx}}(a) > \sigma_{\by^{\bx}}(b)} - \delta_{\sigma_{\by^{\bx}}(a),e} + \delta_{\sigma_{\by^{\bx}}(b),e}.
\end{align*}
Thus
$$
\dd_{\bx'}^I(a,b) = y^{\bx}_{\sigma_{{\by^{\bx}}}(b)} - y^{\bx}_{\sigma_{{\by^{\bx}}}(a)} - \bbone_{\sigma_{\by^{\bx}}(a) > \sigma_{\by^{\bx}}(b)} +
\bbone_{z^{\bx}_{\sigma_{\by^{\bx}}(a)} \leq \Ht_I(z^{\bx}_{\sigma_{\by^{\bx}}(b)})} = \dd_{\bx}^I(a,b)
$$
as desired.
\end{proof}

\begin{lem} \label{L:delta-basic}
Let $\bx \in \ZZ^e$ and $I \subseteq \ZZ/\ell\ZZ$ with $0 \in I$.
Let $a,b,c \in [1,\,e]$.
\begin{enumerate}
\item If $a \prec_{\by^\bx} b$, then $\delta_{\bx}^I(a,b) \geq 0$, with equality if and only if $y^{\bx}_{b} = y^{\bx}_{a} + \bbone_{a > b}$ and $z^{\bx}_{a} > \Ht_I(z^{\bx}_{b})$;

\item If $a \prec_{\by^\bx} b \prec_{\by^\bx} c$, then
$$\delta_{\bx}^I(a,c) = \delta_{\bx}^I(a,b) + \delta_{\bx}^I(b,c) - \ff^{\bz^{\bx},I}_{a,b,c} + \bbone_{a < c < b} + \bbone_{c < b < a} + \bbone_{b < a < c}.
$$
\end{enumerate}
Here, and hereafter, 
\begin{align*}
\ff^{\bz^{\bx},I}_{a,b,c}
:= \begin{cases}
1, &\text{if } z^{\bx}_{a} \leq \Ht_I(z^{\bx}_b) \leq z^{\bx}_{b} \leq \Ht_I(z^{\bx}_{c}); \\
1, &\text{if } z^{\bx}_{b} \leq \Ht_I(z^{\bx}_{c}) < z^{\bx}_{a}; \\
1, &\text{if } \Ht_I(z^{\bx}_{c}) < z^{\bx}_{a} \leq \Ht_I(z^{\bx}_{b});\\
0, &\text{otherwise}.
\end{cases}
\end{align*}
\end{lem}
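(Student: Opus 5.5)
Both parts are direct computations from the definition of $\delta^I_{\bx}$ together with the characterisation of $\prec_{\by^{\bx}}$. Throughout, write $y_a = y^{\bx}_a$, $z_a = z^{\bx}_a$, $H(t) = \Ht_I(t)$, and $\mathbb{1}_{ab} := \bbone_{z_a \le H(z_b)}$. Note that $0\le H(t)\le t$ for $t\in[0,\ell-1]$, since $0\in I$.

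For (1), $a \prec_{\by^{\bx}} b$ means $y_a < y_b$, or $y_a = y_b$ with $a<b$. In either case $y_b - y_a - \bbone_{a>b} \ge 0$. We split by cases.
\begin{itemize}
\item If $a<b$, then $\bbone_{a>b}=0$ and $y_b - y_a \ge 0$.
\item If $a>b$, then $y_a<y_b$ is forced, so $y_b - y_a - 1 \ge 0$.
\end{itemize}
Adding $\mathbb{1}_{ab}\ge 0$ gives $\delta^I_{\bx}(a,b)\ge 0$. Equality holds iff both summands vanish, i.e.\ $y_b = y_a + \bbone_{a>b}$ and $z_a > H(z_b)$.

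For (2), expand
\[
\delta(a,b)+\delta(b,c)-\delta(a,c).
\]
The $y$-terms cancel exactly, leaving
\[
E := \bigl(\bbone_{a>c} - \bbone_{a>b} - \bbone_{b>c}\bigr) + \bigl(\mathbb{1}_{ab} + \mathbb{1}_{bc} - \mathbb{1}_{ac}\bigr).
\]
We must show that $E = \ff^{\bz^{\bx},I}_{a,b,c} - \bbone_{a<c<b} - \bbone_{c<b<a} - \bbone_{b<a<c}$. Since $a,b,c$ are distinct, this splits into two independent identities.

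The first is the order identity
\[
\bbone_{a>c} - \bbone_{a>b} - \bbone_{b>c} = -\bigl(\bbone_{a<c<b} + \bbone_{c<b<a} + \bbone_{b<a<c}\bigr).
\]
This is checked over the six relative orders of $a,b,c$: the left side is $0$ for the three "cyclic" orders $a<b<c$, $b<c<a$, $c<a<b$, and $-1$ for the other three. The $\prec$ hypothesis plays no role here beyond distinctness.

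The second is the residue identity
\[
\mathbb{1}_{ab} + \mathbb{1}_{bc} - \mathbb{1}_{ac} = \ff^{\bz^{\bx},I}_{a,b,c}.
\]
This is a case analysis using monotonicity of $H$ and $H(t)\le t$. We first show the left side lies in $\{0,1\}$.
\begin{itemize}
\item It cannot be $2$: if $z_a\le H(z_b)$ and $z_b\le H(z_c)$, then $z_a\le H(z_b)\le z_b\le H(z_c)$, so $\mathbb{1}_{ac}=1$.
\item It cannot be $-1$: if $\mathbb{1}_{ac}=1$ but $\mathbb{1}_{ab}=\mathbb{1}_{bc}=0$, then $z_a\le H(z_c)<z_b$. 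Since $H(z_c)\in I$ and $H(z_c)\le z_b$, monotonicity gives $H(z_c)\le H(z_b)$, so $z_a\le H(z_b)$, a contradiction.
\end{itemize}
So the left side equals $1$ exactly in three situations, which are precisely the three clauses defining $\ff$:
\begin{itemize}
\item $\mathbb{1}_{ab}=\mathbb{1}_{bc}=1$; then $\mathbb{1}_{ac}=1$ automatically, and this is the chain $z_a\le H(z_b)\le z_b\le H(z_c)$.
\item $\mathbb{1}_{ab}=0$, $\mathbb{1}_{bc}=1$, $\mathbb{1}_{ac}=0$; this is $z_b\le H(z_c)<z_a$, where $z_a>H(z_b)$ follows from $H(z_b)\le z_b\le H(z_c)<z_a$.
\item $\mathbb{1}_{ab}=1$, $\mathbb{1}_{bc}=0$, $\mathbb{1}_{ac}=0$; this is $H(z_c)<z_a\le H(z_b)$, where $z_b>H(z_c)$ follows from $z_b\ge H(z_b)\ge z_a>H(z_c)$.
\end{itemize}
Checking that these implications go both ways, and that the three clauses are mutually exclusive, is the main bookkeeping obstacle. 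It is routine given $H(t)\le t$ and $H(s)\le t\Rightarrow H(s)\le H(t)$.

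Combining the two identities gives $\delta(a,c)=\delta(a,b)+\delta(b,c)-E$, which is the formula in (2).
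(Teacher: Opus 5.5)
Your proposal is correct and takes the same approach as the paper. Part (1) is the identical sign argument. Part (2) is exactly the paper's decomposition of $\delta^I_{\bx}(a,b)+\delta^I_{\bx}(b,c)-\delta^I_{\bx}(a,c)$ into the order identity for distinct $a,b,c$ and the residue identity equal to $\ff^{\bz^{\bx},I}_{a,b,c}$. The paper only asserts that these two identities are straightforward to verify, whereas you carry out the case checks explicitly, and correctly, using $\Ht_I(t)\le t$ and the maximality of $\Ht_I$.
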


\begin{proof}  \hfill
\begin{enumerate}
\item When $a \prec_{\by^{\bx}} b$, by definition of $\prec_{\by^{\bx}}$, we have $y^{\bx}_a \leq y^{\bx}_b$ with equality only if $a < b$.
    Consequently, $y^{\bx}_{b} \geq y^{\bx}_{a} + \bbone_{a > b}$.
    Thus,
    $$
    \delta_{\bx}^I(a,b) = y^{\bx}_{b} - y^{\bx}_{a} - \bbone_{a > b} + \bbone_{z^{\bx}_a \leq \Ht_I(z^{\bx}_b)} \geq 0,
    $$
    with equality if and only if $y^{\bx}_{b} - y^{\bx}_{a} - \bbone_{a > b} = 0$ and $\bbone_{z^{\bx}_a \leq \Ht_I(z^{\bx}_b)} = 0$.

\item
Firstly, it is straightforward to verify that for distinct $a,b,c \in [1,e]$,
\begin{align*}
\bbone_{a > b} + \bbone_{b > c} - \bbone_{a > c} &= \bbone_{a < c < b} + \bbone_{c < b < a} + \bbone_{b < a < c}; \\
\bbone_{z^{\bx}_a \leq \Ht_I(z^{\bx}_b)} + \bbone_{z^{\bx}_b \leq \Ht_I(z^{\bx}_c)} - \bbone_{z^{\bx}_a \leq \Ht_I(z^{\bx}_c)} & = \ff^{\bz^{\bx},I}_{a,b,c}.
\end{align*}
Thus,
\begin{align*}
\delta_{\bx}^I(a,b) + \delta_{\bx}^I(b,c) &=
(y^{\bx}_{b} - y^{\bx}_{a} - \bbone_{a > b} + \bbone_{z^{\bx}_a \leq \Ht_I(z^{\bx}_b)})
+
(y^{\bx}_{c} - y^{\bx}_{b} - \bbone_{b > c} + \bbone_{z^{\bx}_b \leq \Ht_I(z^{\bx}_c)}) \\
&= \delta_{\bx}^I(a,c) + \ff^{\bz^{\bx},I}_{a,b,c} - (\bbone_{a < c < b} + \bbone_{c < b < a} + \bbone_{b < a < c}). 
\end{align*}
\end{enumerate}
\end{proof}

To easily describe the condition $a < c < b$ or $c < b < a$ or $b < a < c$, we define the following, which should be thought of as the wrapped-around integer interval between $a$ and $b$ in $[1,e]$.

\begin{Def}
For $a,b \in [1,\,e]$ with $a \ne b$, with $b = \rr_e^{k}(a)$ where $k \in [1,\, e-1]$, define
\begin{align*}
\ft{a}{b} 
& :=
\{ \rr_e(a), \rr_e^2(a), \dotsc, \rr_e^{k-1}(a) \} \\
&\ =\begin{cases}
[a+1,\, b-1], &\text{if } a < b; \\
[a+1,\,e] \cup [1,\,b-1], &\text{if } a > b.
\end{cases}
\end{align*}
\end{Def}

\begin{rem}
We do not define $\ft{a}{b}$ for $a = b$.  Thus whenever an expression $\ft{a}{b}$ appears, we shall assume that $a\ne b$ (for such expression to be defined).
\end{rem}

The following lemma may be easily verified.

\begin{lem} \label{L:ee}
Let $a,b,c \in [1,\,e]$.  The following statements are equivalent:
\begin{enumerate}
\item $c \in \ft{a}{b}$;
\item $\rr_e(c) \in \ft{\rr_e(a)}{\rr_e(b)}$;
\item $a < c < b$ or $b < a < c$ or $c < b < a$;
\item $|\ft{a}{c}| < |\ft{a}{b}|$;
\item $|\ft{c}{b}| < |\ft{a}{b}|$;
\item $\ft{a}{b} = \ft{a}{c} \cup \{c\} \cup \ft{c}{b}$ (disjoint union).
\end{enumerate}
In particular,
\begin{align}
c \in \ft{a}{b} \Leftrightarrow a \in \ft{b}{c} &\Leftrightarrow b \in \ft{c}{a} \notag \\
&\Leftrightarrow a, b, c \text{ distinct and } c \notin \ft{b}{a} \notag \\
&\Leftrightarrow a, b, c \text{ distinct and } a \notin \ft{c}{b} \notag \\
&\Leftrightarrow a, b, c \text{ distinct and } b \notin \ft{a}{c}. \label{E:ee}
\end{align}
\end{lem}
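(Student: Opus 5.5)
The statement to prove is Lemma~\ref{L:ee}, a purely combinatorial fact about the cyclic order on $[1,\,e]$. I would reduce everything to the cyclic distance. For distinct $a,b$, let $k(a,b)\in[1,\,e-1]$ be the unique integer with $b=\rr_e^{k(a,b)}(a)$; concretely $k(a,b)\equiv_e b-a$. Then $\ft{a}{b}=\{\rr_e^i(a)\mid 1\le i\le k(a,b)-1\}$, and these elements are distinct, so $|\ft{a}{b}|=k(a,b)-1$.

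The first step is (1)$\Leftrightarrow$(2). Since $\rr_e^i(\rr_e(a))=\rr_e(\rr_e^i(a))$ and $k(\rr_e(a),\rr_e(b))=k(a,b)$, we get $\ft{\rr_e(a)}{\rr_e(b)}=\rr_e(\ft{a}{b})$, which gives the equivalence at once.

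Next, (1)$\Leftrightarrow$(4)$\Leftrightarrow$(5)$\Leftrightarrow$(6). If $c\in\ft{a}{b}$, then $c=\rr_e^i(a)$ with $1\le i<k:=k(a,b)$. Hence $k(a,c)=i$ and $k(c,b)=k-i$, and $\ft{a}{b}$ splits as $\{\rr_e^{j}(a)\mid j<i\}\sqcup\{c\}\sqcup\{\rr_e^{j}(a)\mid j>i\}$. The first piece is $\ft{a}{c}$ and the last is $\ft{c}{b}$, which gives (6), and hence (4) and (5). Conversely, (6) trivially implies (1). Now suppose $c\notin\ft{a}{b}$ with $c\ne a,b$. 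Then $c=\rr_e^i(a)$ with $k<i\le e-1$, so $|\ft{a}{c}|=i-1\ge k-1$ and $|\ft{c}{b}|=e-i+k-1\ge k-1$. This refutes (4) and (5). (One reads (4) and (5) as presupposing that $c$ is distinct from $a$ and $b$, as the Remark stipulates.)

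For (1)$\Leftrightarrow$(3), I would split on the order of $a$ and $b$. If $a<b$, then $\ft{a}{b}=[a+1,\,b-1]$, so $c\in\ft{a}{b}$ iff $a<c<b$; the other two patterns in (3) need $b<a$ and so are impossible. If $a>b$, then $\ft{a}{b}=[a+1,\,e]\cup[1,\,b-1]$, so $c\in\ft{a}{b}$ iff $c>a$ (i.e.\ $b<a<c$) or $c<b$ (i.e.\ $c<b<a$); here the pattern $a<c<b$ is impossible.

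Finally, for \eqref{E:ee}, condition (3) is invariant under the cyclic relabelling $(a,b,c)\mapsto(b,c,a)$: the three patterns $a<c<b$, $b<a<c$, $c<b<a$ are permuted among themselves. This gives the first two equivalences. For the rest, given distinct $a,b,c$, exactly one of $c\in\ft{a}{b}$ and $c\in\ft{b}{a}$ holds, since $\ft{a}{b}\sqcup\ft{b}{a}=[1,\,e]\setminus\{a,b\}$ by cyclic distances $k$ and $e-k$. Combining this with the rotated versions yields the last three lines.

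No step is a real obstacle. The only care needed is in the definedness conventions, so that $\ft{\cdot}{\cdot}$ only ever appears with distinct arguments.
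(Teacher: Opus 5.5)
Your proof is correct and complete. The paper gives no argument for this lemma; it says only that it ``may be easily verified''. Your reduction to the cyclic distance $k(a,b)$, with $|\ft{a}{b}| = k(a,b)-1$ and the partition $\ft{a}{b}\sqcup\ft{b}{a}=[1,\,e]\setminus\{a,b\}$, is the natural routine verification. You also treat the definedness conventions of the Remark correctly.
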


\begin{cor} \label{C:impt}
Let $a_1,\dotsc, a_k \in [1,\,e]$, and suppose that $|\ft{a_1}{a_k}| < |\ft{a_2}{a_k}| < \dotsb < |\ft{a_{k-1}}{a_k}|$.
Then, for $r,s,t \in [1,\, k]$, we have
$a_s \in \ft{a_r}{a_t}$ if and only if $r,s,t$ are distinct and ($r > s > t$ or $s > t > r$ or $t > r > s$).
\end{cor}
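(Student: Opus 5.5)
The plan is to reduce Corollary \ref{C:impt} to Lemma \ref{L:ee}, via the equivalences $(1)\Leftrightarrow(4)$ and \eqref{E:ee}, together with the cyclic ordering implied by the chain of inequalities. The hypothesis $|\ft{a_1}{a_k}| < |\ft{a_2}{a_k}| < \dotsb < |\ft{a_{k-1}}{a_k}|$ has a geometric reading. Going backwards cyclically from $a_k$, i.e.\ applying $\rr_e^{-1}$ repeatedly, we meet $a_{k-1}$ first, then $a_{k-2}$, and so on down to $a_1$. Equivalently, going forwards from $a_k$, the cyclic order is $a_k, a_1, a_2, \dotsc, a_{k-1}$. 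In particular the $a_i$ are pairwise distinct; this should be checked first, since $|\ft{a}{b}|$ with $b=\rr_e^{m}(a)$ equals $m-1$, so distinct sizes force distinct $m$'s and hence distinct $a_i$'s.

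First I will make this precise. Write $a_i = \rr_e^{-n_i}(a_k)$ with $n_i \in [1,\,e-1]$ for $i<k$, and set $n_k := 0$. Then $|\ft{a_i}{a_k}| = n_i - 1$, so the hypothesis gives $0 = n_k < n_{k-1} < \dotsb < n_1 \le e-1$. For distinct $r,t$, the element $a_t$ is $\rr_e^{m}(a_r)$ with $m \equiv n_r - n_t \pmod e$ and $m\in[1,e-1]$. Hence $\ft{a_r}{a_t}$ consists of the points $\rr_e^{-n}(a_k)$ with $n$ strictly between $n_t$ and $n_r$ in the cyclic sense. Concretely, if $n_r > n_t$ this means $n_t < n < n_r$. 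If $n_r < n_t$ it means $n < n_r$ or $n > n_t$, where $n$ ranges over $[0,\,e-1]$.

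Then, for $s$ distinct from $r,t$, the condition $a_s\in\ft{a_r}{a_t}$ becomes a condition on $n_s$. It says either $n_t<n_s<n_r$, or $n_r<n_t$ together with $n_s<n_r$ or $n_s>n_t$. Since $n$ is strictly decreasing in the index, these translate into $r<s<t$, or ($t<r$ and ($s>r$ or $s<t$)). This is exactly the cyclic condition ``$t>r>s$'' or ``$s>t>r$'' or ``$r>s>t$'' as stated, up to rechecking orientation. If the orientation comes out reversed, I will recheck the direction convention of $\rr_e^{-n}$ and fix it; the three-term form of Lemma \ref{L:ee}(3) provides a sanity check. If $r,s,t$ are not distinct, then $a_s\notin\ft{a_r}{a_t}$ trivially, because $\ft{a}{b}$ excludes its endpoints (or is undefined when $a=b$).

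The main obstacle is just bookkeeping the direction of the cyclic order, i.e.\ whether decreasing $|\ft{\cdot}{a_k}|$ corresponds to increasing or decreasing index. I will settle it with a small example, say $e=4$ and $a_k=1$, before writing the general argument.
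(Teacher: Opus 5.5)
Your coordinate strategy is sound, and it is more direct than the paper's proof. The paper never parametrises the points. It proves the case $r>s>t$ from Lemma \ref{L:ee}, using the disjoint decompositions of $\ft{a_r}{a_k}$. It gets the other two cyclic rotations from \eqref{E:ee}. It obtains the converse from the fact that, for distinct points, exactly one of $a_s\in\ft{a_r}{a_t}$ and $a_s\in\ft{a_t}{a_r}$ holds.

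As written, however, your argument goes wrong twice, both times at the orientation step you flagged.

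\begin{itemize}
\item You misread the hypothesis. With $a_i=\rr_e^{-n_i}(a_k)$ you correctly get $|\ft{a_i}{a_k}|=n_i-1$, so the hypothesis gives $n_1<n_2<\dotsb<n_{k-1}$, not $n_{k-1}<\dotsb<n_1$. Going backwards from $a_k$ one meets $a_1$ first, and the forward cyclic order is $a_k,a_{k-1},\dotsc,a_1$. In your test case $e=4$, $a_3=1$, $a_1=4$, $a_2=2$, going forward from $1$ you reach $a_2$ before $a_1$.
\item You misidentify the resulting condition. From your reversed monotonicity you derive ``$r<s<t$ or $t<r<s$ or $s<t<r$''. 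This is the \emph{opposite} cyclic orientation to the stated ``$r>s>t$ or $s>t>r$ or $t>r>s$''. In the example, $(r,s,t)=(1,2,3)$ satisfies your condition, yet $\ft{a_1}{a_3}=\ft{4}{1}=\emptyset$.
\end{itemize}

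So your claim that the two conditions coincide is false. You only reach the stated conclusion because the two sign errors cancel.

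The repair is short. Since $\rr_e^{-e}$ is the identity, set $n_k:=e$ rather than $0$ and take representatives in $[1,\,e]$. Your description of $\ft{a_r}{a_t}$ holds verbatim in this window: it consists of those $\rr_e^{-n}(a_k)$ with $n_t<n<n_r$ when $n_r>n_t$, and with $n<n_r$ or $n>n_t$ when $n_r<n_t$. Now $n_1<n_2<\dotsb<n_k$ is strictly increasing in the index. So for distinct $r,s,t$ the condition becomes $t<s<r$, or $r<t$ together with ($s<r$ or $s>t$). This is exactly $r>s>t$ or $t>r>s$ or $s>t>r$.

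Your treatment of the distinctness of the $a_i$ and of the non-distinct case is fine. With this correction your argument is a valid, self-contained alternative that uses little beyond the definition of $\ft{\cdot}{\cdot}$. The price is exactly the orientation bookkeeping that tripped you up here.
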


\begin{proof}
Suppose first that $r > s > t$.  If $r = k$, then $|\ft{a_t}{a_k}| < |\ft{a_s}{a_k}|$, so that $a_t \in \ft{a_s}{a_k}$ by Lemma \ref{L:ee}(1,5) and hence $a_s \in \ft{a_k}{a_t} = \ft{a_r}{a_t}$.
On the other hand, if $r < k$, then
$|\ft{a_t}{a_k}| < |\ft{a_s}{a_k}| < |\ft{a_r}{a_k}|$, so that by Lemma \ref{L:ee}(5,6),
\begin{alignat*}{2}
  \ft{a_r}{a_k}
  &= \ft{a_r}{a_s} \cup \ft{a_s}{a_k} \cup \{a_s\} & \quad & (\text{since }|\ft{a_s}{a_k}| < |\ft{a_r}{a_k}|) \\
  &= \ft{a_r}{a_s} \cup \ft{a_s}{a_t} \cup \ft{a_t}{a_k} \cup \{ a_s,a_t\} & \quad & (\text{since }|\ft{a_t}{a_k}| < |\ft{a_s}{a_k}|)\\
  &= \ft{a_r}{a_t} \cup \ft{a_t}{a_k} \cup \{a_t\} & \quad & (\text{since }|\ft{a_t}{a_k}| < |\ft{a_r}{a_k}|).
\end{alignat*}
Since all the above unions are disjoint, we see that $a_s \in \ft{a_r}{a_k} \setminus (\ft{a_t}{a_k} \cup \{ a_t\}) = \ft{a_r}{a_t}$.

If $s > t > r$, then above argument shows $a_t \in \ft{a_s}{a_r}$ so that $a_s \in \ft{a_r}{a_t}$ by \eqref{E:ee}.  Similar argument also holds if $t > r > s$.

Conversely, if $a_s \in \ft{a_r}{a_t}$, then $a_r, a_s, a_t$ must be distinct by Lemma \ref{L:ee}(1,3), and so $r,s,t$ are distinct.
Now if ($r > s > t$ or $s > t > r$ or $t > r > s$) is false while $r,s,t$ are distinct, we must have $r > t > s$ or $t > s > r$ or $s > r > t$, in which case the above argument shows that $a_s \in \ft{a_t}{a_r}$, so that $a_s \notin \ft{a_r}{a_t}$ by \eqref{E:ee}, a contradiction.
This completes the proof.
\end{proof}

\begin{cor} \label{C:ee} 
Let $\sigma \in \sym{e}$.
Then $\sigma$ is completely determined by $\sigma(1)$ and
$$
T_{\sigma} := \{(a,b,c) \in [1,\,e]^3 \mid \sigma(c) \in \ft{\sigma(a)}{\sigma(b)}\}.
$$
\end{cor}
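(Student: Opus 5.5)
The plan is to reconstruct $\sigma$ step by step from $\sigma(1)$, using only the cyclic betweenness data recorded in $T_\sigma$. Write $a_i := \sigma(i)$ for $i \in [1,\,e]$.

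We may assume $e \ge 3$. For $e = 2$ the permutation is determined by $\sigma(1)$ alone, since $\sigma(2)$ is then the other element of $[1,\,2]$.

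The first step is to read off, for each $c \in [1,\,e]$ with $c \ne 1$, the cyclic position of $a_c$ relative to $a_1$. Concretely, let $k_c \in [1,\,e-1]$ be the number with $a_c = \rr_e^{k_c}(a_1)$. Then $|\ft{a_1}{a_c}| = k_c - 1$, and by Lemma \ref{L:ee}(1) this set equals $\{ a_d \mid d \in [1,\,e],\ (1,c,d) \in T_\sigma\}$. Since $\sigma$ is a bijection, we obtain
$$
k_c = 1 + |\{ d \in [1,\,e] \mid (1,c,d) \in T_\sigma \}|,
$$
and the right-hand side depends only on $T_\sigma$. Hence $a_c = \rr_e^{k_c}(\sigma(1))$ is determined by $\sigma(1)$ and $T_\sigma$ for every $c \neq 1$. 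Together with $a_1 = \sigma(1)$ itself, this determines $\sigma$ completely.

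The one point to check is that the count above does not accidentally include $d = 1$ or $d = c$. This holds because the set $\ft{a}{b}$ never contains its own endpoints $a$ or $b$, by its definition as $\{\rr_e(a), \dotsc, \rr_e^{k-1}(a)\}$ with $b = \rr_e^k(a)$. So the count really equals $|\ft{a_1}{a_c}|$. Corollary \ref{C:impt} is not needed here; a single counting argument suffices.

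There is no real obstacle in this argument; it is essentially bookkeeping. The only care needed is in the degenerate case $e = 2$, which we handle separately as above. For $e=2$ the set $T_\sigma$ is in fact empty, so $\sigma(1)$ alone must carry all the information, and it does.
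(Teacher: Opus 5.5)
Your argument is correct and is essentially the paper's. Both proofs recover $\sigma$ from the triples $(1,c,d)\in T_\sigma$. The paper first rotates to $\tau=\rr_e^{-\sigma(1)+1}\sigma$, using Lemma \ref{L:ee}(1,2) to get $T_\tau=T_\sigma$, and then sorts $\tau(2),\dotsc,\tau(e)$ via $\tau(d)\in\ft{1}{\tau(c)}\Leftrightarrow\tau(d)<\tau(c)$. You skip the rotation and count $|\ft{\sigma(1)}{\sigma(c)}|$ directly, which gives the explicit formula $\sigma(c)=\rr_e^{k_c}(\sigma(1))$.

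Your separate treatment of $e=2$ and your endpoint check are harmless but unnecessary. The count equals $|\ft{\sigma(1)}{\sigma(c)}|$ by bijectivity of $\sigma$ alone, and when $e=2$ your formula already gives $k_2=1$.
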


\begin{proof} 
Let $\tau = \rr_e^{-\sigma(1)+1} \sigma$.  Then $\tau(1) = 1$, and by Lemma \ref{L:ee}(1,2),
$$
\{ (a,b,c) \in [1,\,e]^3 \mid \tau(c) \in \ft{\tau(a)}{\tau(b)} \}
= \{ (a,b,c) \in [1,\,e]^3 \mid \sigma(c) \in \ft{\sigma(a)}{\sigma(b)} \} = T_{\sigma}.
$$
Since $\ft{\tau(1)}{\tau(a)} = [2,\, \tau(a)-1]$ for all $a \in [2,\, e]$, we have $\tau(b) \in \ft{\tau(1)}{\tau(a)}$ if and only if $\tau(b) < \tau(a)$ for any distinct $a, b \in [2,\,e]$.
In particular, we know how $\tau(2), \dotsc, \tau(e)$ are ordered (with respect to the natural order on $[1,\,e]$) from the knowledge of $T_{\sigma}$.
The least of these must be $2$, the next one must be $3$, and so on, and the largest must be $e$.
This shows that $\tau$ is completely determined by $T_{\sigma}$, and hence so is $\sigma = \rr_e^{\sigma(1)-1} \tau$ when $\sigma(1)$ is known.
\end{proof}

\begin{prop} \label{P:unique-x}
Let $\bx \in \ZZ^e$, and let $I \subseteq \ZZ/\ell\ZZ$ with $0 \in I$.
Then $\bx$ is completely determined by $|\bx|$, $\dd_{\bx}^I$, $(\bz^{\bx})^{\sigma_{\by^{\bx}}}$ and $T_{\sigma_{\by^{\bx}}} := \{ (a,b,c) \in [1,\,e]^3 \mid \sigma_{\by^{\bx}}(c) \in \ft{\sigma_{\by^{\bx}}(a)}{\sigma_{\by^{\bx}}(b)} \}$.
\end{prop}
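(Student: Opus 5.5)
The plan is to show that any two vectors with the same four data lie in a single orbit of $\langle \rr_e\Be_e\rangle$ under $\CDot{\ell}$. Then $|\bx|$ pins down the point within that orbit, because $|\rr_e\Be_e \CDot{\ell} \bx| = |\bx| + \ell$.

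So suppose $\bx, \bx^\circ \in \ZZ^e$ satisfy
\[
|\bx| = |\bx^\circ|, \qquad \dd^I_{\bx} = \dd^I_{\bx^\circ}, \qquad (\bz^{\bx})^{\sigma_{\by^{\bx}}} = (\bz^{\bx^\circ})^{\sigma_{\by^{\bx^\circ}}}, \qquad T_{\sigma_{\by^{\bx}}} = T_{\sigma_{\by^{\bx^\circ}}}.
\]
Write $\sigma = \sigma_{\by^{\bx}}$ and $\sigma^\circ = \sigma_{\by^{\bx^\circ}}$.

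\emph{Step 1: reduce to $\sigma = \sigma^\circ$.} The proof of Corollary \ref{C:ee} shows that $T_\sigma$ determines $\tau = \rr_e^{-\sigma(1)+1}\sigma$. Hence $\sigma^\circ = \rr_e^k \sigma$ for some $k \in [0,\,e-1]$. Put $\bx'' := (\rr_e\Be_e)^k \CDot{\ell} \bx$. Iterating Lemma \ref{L:sigma}(2), as in the proof of Proposition \ref{P:invariant}, gives $\sigma_{\by^{\bx''}} = \rr_e^k\sigma = \sigma^\circ$. The same proof shows that the sorted $\bz$-vector is unchanged, and Proposition \ref{P:invariant} shows that $\dd^I$ is unchanged. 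Also $|\bx''| = |\bx| + k\ell$. It therefore suffices to treat the case $\sigma = \sigma^\circ$, keeping track of the shift $k\ell$ in $|\cdot|$.

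\emph{Step 2: recover $\bx$ up to a constant when $\sigma$ is known.} Let $w_a := z^{\bx}_{\sigma(a)}$ and $Y_a := y^{\bx}_{\sigma(a)}$. For $a \in [1,\,e-1]$, the definition of $\dd^I_{\bx}$ gives
\[
Y_{a+1} - Y_a = \dd^I_{\bx}(a,a+1) + \bbone_{\sigma(a) > \sigma(a+1)} - \bbone_{w_a \leq \Ht_I(w_{a+1})}.
\]
Every term on the right is known once $\sigma$, $\dd^I_{\bx}$ and the sorted $\bz$-vector are known. So $\by^{\bx}$ is determined up to adding $c(1,\dotsc,1)$, and $\bz^{\bx}$ is determined outright.

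Consequently, when $\sigma = \sigma^\circ$ we get $\bx^\circ = \bx'' + c\ell(1,\dotsc,1)$ for some $c \in \ZZ$. By Lemma \ref{L:AA}, $(\rr_e\Be_e)^{ec} = c(1,\dotsc,1)$ as an element of $\EW_e$. So $\bx^\circ = (\rr_e\Be_e)^{k+ec} \CDot{\ell} \bx$.

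\emph{Step 3: conclude with $|\bx|$.} Comparing sizes gives $|\bx^\circ| = |\bx| + (k+ec)\ell$. Since $|\bx^\circ| = |\bx|$, we get $k + ec = 0$, so $\bx^\circ = \bx$.

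The main obstacle is the bookkeeping in Step 1. I need to confirm that $T_\sigma$ determines $\sigma$ up to left multiplication by a power of $\rr_e$, which follows from Lemma \ref{L:ee}(1,2) together with the proof of Corollary \ref{C:ee}. I also need to confirm that applying $\rr_e\Be_e$ preserves all three of $\dd^I$, $T$, and the sorted $\bz$-vector, and not just $\dd^I$. This comes from the computations inside the proof of Proposition \ref{P:invariant} and from Lemma \ref{L:ee}(1,2) applied to $\rr_e\sigma$.
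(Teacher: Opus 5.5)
Your proposal is correct and takes essentially the same route as the paper. Both arguments rest on the same ingredients: the invariance of $\dd^I$, $T$ and the sorted $\bz$-vector under $\rr_e\Be_e \CDot{\ell}$; Corollary \ref{C:ee}, which fixes $\sigma$ up to a rotation; recovery of the sorted $\by$-vector from $\dd^I$ and the sorted $\bz$-vector; and $|\bx|$, which locates $\bx$ within its $\langle \rr_e\Be_e\rangle$-orbit. The only cosmetic difference is that the paper normalises to a canonical representative with $\sigma(1)=1$ and $y_{\sigma(1)}=0$, using $\dd^I(1,i)$, whereas you compare two candidates directly, using the consecutive values $\dd^I(a,a+1)$.
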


\begin{proof}
Let $r := \sigma_{\by^{\bx}}(1) -1 $ and $s := y^{\bx}_{\sigma_{\by^{\bx}}(1)}$, and
let $\bx' := (\rr_e\Be_e)^{-s e - r} \CDot{\ell} \bx$.
Then $\dd_{\bx'}^I = \dd_{\bx}^I$ by Proposition \ref{P:invariant}.
Furthermore,
\begin{align*}
\by^{\bx'} &= (\rr_e\Be_e)^{-se-r} \CDot{1} \by^{\bx} = (\by^{\bx}+ \bv_e(-se-r))^{\rr_e^r}, \\
\bz^{\bx'} &= (\rr_e\Be_e)^{-se-r} \CDot{0} \bz^{\bx} = (\bz^{\bx})^{\rr_e^r}
\end{align*}
by Lemma \ref{L:AA} and \eqref{E:CDot},
so that
$\sigma_{\by^{\smash{\bx'}}} = \rr_e^{-r} \sigma_{\by^{\bx}}$ by Lemma \ref{L:sigma}(2).
In particular,
$$
(\bz^{\bx'})^{\sigma_{\by^{\bx\smash{'}}}} = ((\bz^{\bx})^{\rr_e^r})^{\rr_e^{-r} \sigma_{\by^{\bx}}} = (\bz^{\bx})^{\sigma_{\by^{\bx}}},
$$
and
\begin{align*}
(a,b,c) \in T_{\sigma_{\by^{\bx}}}
&\Leftrightarrow \sigma_{\by^{\bx}}(c) \in \ft{\sigma_{\by^{\bx}}(a)}{\sigma_{\by^{\bx}}(b)} \\
&\Leftrightarrow \rr_e^{-r}\sigma_{\by^{\bx}}(c) \in \ft{\rr_e^{-r}\sigma_{\by^{\bx}}(a)}{\rr_e^{-r}\sigma_{\by^{\bx}}(b)} \qquad \text{by Lemma \ref{L:ee}(1,2)} \\
&\Leftrightarrow \sigma_{\by^{\smash{\bx'}}}(c) \in \ft{\sigma_{\by^{\smash{\bx'}}}(a)}{\sigma_{\by^{\smash{\bx'}}}(b)} \\
&\Leftrightarrow (a,b,c) \in T_{\sigma_{\by^{\bx\smash{'}}}}.
\end{align*}
Now $$\sigma_{\by^{\bx\smash{'}}}(1) = \rr_e^{-r} \sigma_{\by^{\bx}}(1) = \rr_e^{-r}(r+1) = 1,$$ so that
$\sigma_{\by^{\bx\smash{'}}}$ is completely determined by $T_{\sigma_{\by^{\bx\smash{'}}}} = T_{\sigma_{\by^{\bx}}}$ by Corollary \ref{C:ee}.
Furthermore,
\begin{align*}
(\by^{\bx'})^{\sigma_{\by^{\bx\smash{'}}}}
&= ((\by^{\bx} + \bv_e(-se-r))^{\rr_e^r}) ^{\rr_e^{-r}\sigma_{\by^{\bx}}} \\
&= (\by^{\bx} + \bv_e(-se-r))^{\sigma_{\by^{\bx}}} \\
&= (\by^{\bx})^{\sigma_{\by^{\bx}}} + (-s-1,\dotsc, -s-1, \underbrace{-s,\dotsc, -s}_{e-r \text{ times}})^{\sigma_{\by^{\bx}}}.
\end{align*}
Since $\sigma_{\by^{\bx}}(1) = r+1$ and the $(r+1)$-th entry of $(-s-1,\dotsc, -s-1, \underbrace{-s,\dotsc, -s}_{e-r \text{ times}})$ is $-s$, we have
$$
y^{\bx'}_{\sigma_{\by^{\bx\smash{'}}}(1)} = y^{\bx}_{\sigma_{\by^{\bx}}(1)} + (-s) = 0.
$$
Consequently,
\begin{align*}
y^{\bx'}_{\sigma_{\by^{\bx\smash{'}}}(i)} &= \dd_{\bx'}^I(1,i) +
y^{\bx'}_{\sigma_{\by^{\bx\smash{'}}}(1)} +
\bbone_{\sigma_{\by^{\bx\smash{'}}}(1) > \sigma_{\by^{\bx\smash{'}}}(i)} - \bbone_{z^{\bx'}_{\sigma_{\by^{\bx\smash{'}}}(1)} \leq \Ht_I(z^{\bx'}_{\sigma_{\by^{\bx\smash{'}}}(i)})}\\
&=
\dd_{\bx}^I(1,i) - \bbone_{z^{\bx}_{\sigma_{\by^{\bx}}(1)} \leq \Ht_I(z^{\bx}_{\sigma_{\by^{\bx}}(i)})}
\end{align*}
since $\dd_{\bx}^I = \dd_{\bx'}^I$, $y^{\bx'}_{\sigma_{\by^{\bx\smash{'}}}(1)} = 0$, $\sigma_{\by^{\bx\smash{'}}}(1) = 1$ and
$(\bz^{\bx'})^{\sigma_{\by^{\bx\smash{'}}}} = (\bz^{\bx})^{\sigma_{\by^{\bx}}}$.
Thus $(\by^{\bx'})^{\sigma_{\by^{\bx\smash{'}}}}$ is completely determined by $\dd_{\bx}^I$ and $(\bz^{\bx})^{\sigma_{\by^{\bx}}}$, and hence
$$
\bx' = \by^{\bx'}\ell + \bz^{\bx'} = \left( (\by^{\bx'})^{\sigma_{\by^{\bx\smash{'}}}} \ell + (\bz^{\bx'})^{\sigma_{\by^{\bx\smash{'}}}}\right)^{\sigma_{\by^{\bx\smash{'}}}^{-1}}
$$
is completely determined by $\dd_{\bx}^I$, $(\bz^{\bx})^{\sigma_{\by^{\bx}}} = (\bz^{\bx'})^{\sigma_{\by^{\bx\smash{'}}}}$ and $T_{\sigma_{\by^{\bx}}}$, as $\sigma_{\by^{\bx\smash{'}}}$ is completely determined by $T_{\sigma_{\by^{\bx\smash{'}}}}$ by Corollary \ref{C:ee} and $T_{\sigma_{\by^{\bx\smash{'}}}} = T_{\sigma_{\by^{\bx}}}$ as shown above.

It remains to show that $\bx$ is determined by $|\bx|$ and $\bx'$.
We have
$$
\bx' = (\rr_e\Be_e)^{-se - r} \CDot{\ell} \bx = \rr_e^{-r} \bv_e(-se-r) \CDot{\ell} \bx = (\bx + \ell \bv_e(-se-r))^{\rr_e^r},
$$
so that
$$
|\bx'| = |(\bx + \ell \bv_e(-se-r))^{\rr_e^r}| = |\bx| + \ell|\bv_e(-se-r)| = |\bx| + \ell(-se -r).
$$
Thus $s$ and $r$ are the unique quotient and remainder of $\frac{|\bx| - |\bx'|}{\ell}$ when divided by $e$ respectively, and are therefore completely determined by $|\bx|$ and $\bx'$.
Since $\bx = (\rr_e\Be_e)^{se+r} \CDot{\ell} \bx'$,
the desired result now follows.
\end{proof}

The next proposition establishes some important properties of 
$(I,m)$-initial $\bx$ that are crucial for the theorem that follows.

\begin{prop} \label{P:initial}
Let $\bx = (x_1,\dotsc, x_e) \in \ZZ^e$, $I \subseteq \ZZ/\ell\ZZ$ with $0 \in I$, and $m \in \ZZ^+$.
Suppose that $\bx$ is $(I,m)$-initial.
Let 
$a,b, c\in [1,\,e]$.

\begin{enumerate}
\item
If $a < b$ and $\sigma_{\by^\bx}(j) \notin \ft{\sigma_{\by^\bx}(a)}{\sigma_{\by^\bx}(b)}$ for all $j \in [a+1,\,b-1]$,
then
    $$
    \dd_{\bx}^I
    (a,b) \leq m.
    $$

\item
If $a < b < c$ and $\sigma_{\by^\bx}(j) \notin \ft{\sigma_{\by^\bx}(a)}{\sigma_{\by^\bx}(b)}$ for all $j \in [a+1,\, c-1]$, 
then the following
statements are equivalent:
\begin{enumerate}
\item  $\sigma_{\by^\bx}(c) \in \ft{\sigma_{\by^\bx}(a)}{\sigma_{\by^\bx}(b)}$.
\item $\pi_{\bx}^{I,m}(a,c) - \dd_{\bx}^I(a,b) - \dd_{\bx}^I(b,c) + \ff^{(\bz^{\bx})^{\sigma_{\by^{\bx}}},I}_{a,b,c} = 1$.
\item $\pi_{\bx}^{I,m}(a,c) - \dd_{\bx}^I(a,b) - \dd_{\bx}^I(b,c) + \ff^{(\bz^{\bx})^{\sigma_{\by^{\bx}}},I}_{a,b,c} > 0$.
\end{enumerate}
\end{enumerate}
\end{prop}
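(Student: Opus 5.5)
The plan is to prove part (2) from Lemma \ref{L:delta-basic}(2) and part (1), and to prove part (1) by induction on the size of a wrapped interval. Throughout, write $\sigma := \sigma_{\by^{\bx}}$ and $s_k := \sigma(k)$, so that $s_1 \prec_{\by^{\bx}} s_2 \prec_{\by^{\bx}} \dotsb \prec_{\by^{\bx}} s_e$ and $\dd^I_{\bx}(a,b) = \delta^I_{\bx}(s_a,s_b)$ for $a<b$.

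\emph{Part (2), assuming part (1).} For $a<b<c$ we have $s_a \prec s_b \prec s_c$. Apply Lemma \ref{L:delta-basic}(2) to the triple $(s_a,s_b,s_c)$. The three indicator terms $\bbone_{s_a<s_c<s_b}+\bbone_{s_c<s_b<s_a}+\bbone_{s_b<s_a<s_c}$ equal $\bbone_{s_c\in\ft{s_a}{s_b}}$ by Lemma \ref{L:ee}(1,3). The correction term $\ff^{\bz^{\bx},I}_{s_a,s_b,s_c}$ is by definition $\ff^{(\bz^{\bx})^{\sigma},I}_{a,b,c}$. This gives the identity
$$\dd^I_{\bx}(a,c)-\dd^I_{\bx}(a,b)-\dd^I_{\bx}(b,c)+\ff^{(\bz^{\bx})^{\sigma},I}_{a,b,c} = \bbone_{s_c\in\ft{s_a}{s_b}}.$$
If (a) fails, then $\pi^{I,m}_{\bx}(a,c)\le \dd^I_{\bx}(a,c)$ makes the quantity in (b) and (c) at most $0$. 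Hence (c) implies (a), and (b) implies (c) trivially.

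For (a) $\Rightarrow$ (b) it suffices to show that $\dd^I_{\bx}(a,c)\le m$, because then $\pi^{I,m}_{\bx}(a,c)=\dd^I_{\bx}(a,c)$. I will get this from part (1) applied to the pair $(a,c)$, so I must check that $s_j\notin\ft{s_a}{s_c}$ for every $j\in[a+1,c-1]$.
\begin{itemize}
\item Since $s_c\in\ft{s_a}{s_b}$, Lemma \ref{L:ee}(6) gives $\ft{s_a}{s_c}\subseteq\ft{s_a}{s_b}$. So the hypothesis of (2) handles every $j\neq b$.
\item For $j=b$, we have $s_b\notin\ft{s_a}{s_c}$ by \eqref{E:ee}, since $s_c\in\ft{s_a}{s_b}$.
\end{itemize}

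\emph{Part (1).} I would induct on $|\ft{s_a}{s_b}|$.

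In the base case $\ft{s_a}{s_b}=\emptyset$, we have $s_b=\rr_e(s_a)$ with $s_a\prec s_b$. The $(I,m)$-initial condition then gives $\delta^I_{\bx}(s_a,s_b)\le m$ directly.

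Otherwise pick $c\in\ft{s_a}{s_b}$, say $c=s_k$. The hypothesis forces $k<a$ or $k>b$.

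If some such $k$ exceeds $b$, take the triple $s_a\prec s_b\prec s_k$ in Lemma \ref{L:delta-basic}(2). Its indicator term is $1$, since $s_k\in\ft{s_a}{s_b}$. Using $\delta\ge0$ from Lemma \ref{L:delta-basic}(1) and $\ff\le1$, this gives
$$\delta(s_a,s_b)=\delta(s_a,s_k)-\delta(s_b,s_k)+\ff-1\le\delta(s_a,s_k).$$
Moreover $\ft{s_a}{s_k}\subsetneq\ft{s_a}{s_b}$. I would choose $k$ to be the largest index with $s_k\in\ft{s_a}{s_b}$ and $k>b$, and then check, using Corollary \ref{C:impt}, that the pair $(a,k)$ again satisfies the hypothesis of (1), so that the induction applies.

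If every $c\in\ft{s_a}{s_b}$ has index $k<a$, the symmetric manipulation does not bound $\delta(s_a,s_b)$ from above. In that case I would instead take $c=\rr_e^{-1}(s_b)$, which is the neighbour of $s_b$, so that $\delta(c,s_b)\le m$ by initiality. I would then try to show that the correction term $\ff$ vanishes or is compensated, or else restructure the induction to run on pairs with $k<a$ via $\rr_e$-rotation as in Proposition \ref{P:invariant}.

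The main obstacle is this second case. It requires either a careful choice of the intermediate element or a strengthened induction statement, for example bounding $\delta$ along a whole chain $s_a,\rr_e(s_a),\dotsc,s_b$ ordered by $\prec$, and tracking the $\ff$ terms carefully.
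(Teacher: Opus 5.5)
Your part (2) is correct and follows the paper's argument: the identity from Lemma~\ref{L:delta-basic}(2), plus part (1) applied to the pair $(a,c)$.

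\textbf{The gap is in part (1).} You leave open the case where every element of $\ft{s_a}{s_b}$ is $s_k$ with $k<a$, and you believe the triple argument gives no upper bound there. It does. Let $c\in\ft{s_a}{s_b}$ with $c=s_k$, $k<a$, and apply Lemma~\ref{L:delta-basic}(2) to $c\prec_{\by^{\bx}}s_a\prec_{\by^{\bx}}s_b$:
\[
\delta^I_{\bx}(s_a,s_b)=\delta^I_{\bx}(c,s_b)-\delta^I_{\bx}(c,s_a)+\ff^{\bz^{\bx},I}_{c,s_a,s_b}-\bbone_{s_b\in\ft{c}{s_a}}.
\]
By \eqref{E:ee}, $s_b\in\ft{c}{s_a}$ if and only if $c\in\ft{s_a}{s_b}$. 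So the indicator equals $1$ and absorbs $\ff\le 1$, and together with $\delta^I_{\bx}(c,s_a)\ge 0$ this gives $\delta^I_{\bx}(s_a,s_b)\le\delta^I_{\bx}(c,s_b)$.

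Now take your own choice $c=\rr_e^{-1}(s_b)$. It lies in the nonempty set $\ft{s_a}{s_b}$, so in this case its index is $<a<b$. Initiality then gives $\delta^I_{\bx}(c,s_b)\le m$ directly. So this case needs no induction and no rotation; it is exactly the paper's Case~3.

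\textbf{Your first case also needs repair.} Taking the \emph{largest} index $k>b$ with $s_k\in\ft{s_a}{s_b}$ does not in general make $(a,k)$ satisfy the hypothesis of (1). For example, take $e=5$, $(\sigma(1),\dotsc,\sigma(5))=(4,1,5,2,3)$, $a=2$, $b=3$. Then $\ft{1}{5}=\{2,3,4\}$ and the largest such index is $k=5$. But $s_4=2\in\ft{1}{3}$ with $4\in[a+1,k-1]$, so the hypothesis fails for $(a,k)$.

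Choose instead the \emph{smallest} index $k>b$ with $s_k\in\ft{s_a}{s_b}$. Then every $s_j$ with $j\in[a+1,k-1]$ lies outside $\ft{s_a}{s_b}$: for $j<b$ by hypothesis, for $j=b$ trivially, and for $j>b$ by minimality. Since $\ft{s_a}{s_b}\supseteq\ft{s_a}{s_k}$ by Lemma~\ref{L:ee}(6), your induction on $|\ft{s_a}{s_b}|$ goes through.

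With these two fixes your proof of (1) is valid. The paper organises the same two reductions without induction. It picks $k\in[b,e]$ minimising $|\ft{\sigma(a)}{\sigma(k)}|$, sets $k^-=\sigma^{-1}\rr_e^{-1}\sigma(k)$, and shows $k^-\le a$. It then obtains $\dd^I_{\bx}(a,b)\le\dd^I_{\bx}(a,k)\le\dd^I_{\bx}(k^-,k)\le m$, using each of your two triple reductions at most once.
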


\begin{proof}
We adopt the following shorthands in this proof: $\by := \by^{\bx}$, $\sigma := \sigma_{\by^\bx}$, $\bz := (\bz^{\bx})^{\sigma_{\by^{\bx}}}$, $\dd := \dd_{\bx}^I$, $\pi := \pi_{\bx}^{I,m}$.
\begin{enumerate}
\item
Let $k \in [b,\,e]$ be such that $|\ft{\sigma(a)}{\sigma(k)}| = \min\{ |\ft{\sigma(a)}{\sigma(k')}| \mid  k' \in [b,\,e] \}$.
Then $k \geq b > a$, and $|\ft{\sigma(a)}{\sigma(k)}| \leq |\ft{\sigma(a)}{\sigma(b)}|$.
Let $k^- := \sigma^{-1}\rr_e^{-1}\sigma(k)$.
Then $\rr_e\sigma(k^-) = \sigma(k)$.
If $k^- > a$, then 
$$|\ft{\sigma(a)}{\sigma(k^-)}| < |\ft{\sigma(a)}{\rr_e\sigma(k^-)}| = |\ft{\sigma(a)}{\sigma(k)}| \leq |\ft{\sigma(a)}{\sigma(b)},$$
contradicting our choice of $k$ if $k^- \geq b$ and our assumption that for all $j \in [a+1,\, b-1]$, $\sigma(j) \notin \ft{\sigma(a)}{\sigma(b)}$, or equivalently
$|\ft{\sigma(a)}{\sigma(j)}| > |\ft{\sigma(a)}{\sigma(b)}|$ by \eqref{E:ee} and Lemma \ref{L:ee}(1,4), if $a< k^- < b$.
Thus $k^- \leq a < b \leq k$, and
$$
\dd(k^-,k) = \delta_{\bx}^I(\sigma(k^-), \sigma(k)) = \delta_{\bx}^I(\sigma(k^-), \rr_e\sigma(k^-)) \leq m$$
since $\bx$ is $(I,m)$-initial.
We now consider four separate cases.
\begin{description}
\item[Case 1. $k^- = a$ and $k=b$] We have $\dd(a,b) = \dd(k^-,k)  \leq m$.

\item[Case 2. $k^- = a$ and $k > b$]
By Lemma \ref{L:delta-basic}(2),
\begin{align*}
\dd(a, b) &= \dd(k^-,b) = \dd(k^-, k) - \dd(b,k) - \bbone_{\sigma(k) \in \ft{\sigma(k^-)}{\sigma(b)}} + \ff^{\bz,I}_{k^-,b,k} 
 \leq m,
\end{align*}
since $\dd(b,k) \geq 0$ by Lemma \ref{L:delta-basic}(1), $\sigma(k) = \rr_e\sigma(k^-) \in \ft{\sigma(k^-)}{\sigma(b)}$ and $\ff^{\bz,I}_{k^-,b,k} \leq 1$.

\item[Case 3. $k^- < a$ and $k= b$]
We have
\begin{align*}
\dd(a, b) = \dd(a,k) &= \dd(k^-, k) - \dd(k^-,a) - \bbone_{\sigma(k) \in \ft{\sigma(k^-)}{\sigma(a)}} + \ff^{\bz,I}_{k^-,a,k} 
\leq m,
\end{align*}
similar to case 2.

\item[Case 4. $k^- < a$ and $k > b$.]
Since $|\ft{\sigma(a)}{\sigma(k)}| < |\ft{\sigma(a)}{\sigma(b)}|$ by choice of $k$,
we have $\sigma(k) \in \ft{\sigma(a)}{\sigma(b)}$ by Lemma \ref{L:ee}(1,4), so that
\begin{align*}
\dd(a, b) &= \dd(a, k) - \dd(b,k) - \bbone_{\sigma(k) \in \ft{\sigma(a)}{\sigma(b)}} + \ff^{\bz,I}_{a,b,k} \\
&\leq \dd(a, k) \\
&= \dd(k^-,k) - \dd(k^-,a) - \bbone_{\sigma(k) \in \ft{\sigma(k^-)}{\sigma(a)}} + \ff^{\bz,I}_{k^-,a,k} \leq m,
\end{align*}
as before.
\end{description}

\item
If $\sigma(c) \in \ft{\sigma(a)}{\sigma(b)}$, then
$\ft{\sigma(a)}{\sigma(c)} \subsetneq \ft{\sigma(a)}{\sigma(b)}$ by Lemma \ref{L:ee}(1,6).
Since, by assumption, for each $j \in [a+1,\, c-1]$, $\sigma(j)\notin \ft{\sigma(a)}{\sigma(b)}$, we thus have
$\sigma(j) \notin \ft{\sigma(a)}{\sigma(c)}$, so that
$\dd(a,c) \leq m$ by part (1).
Thus
\begin{align*}
\pi(a,c) - \dd(a,b) - \dd(b,c) + \ff^{\bz,I}_{a,b,c} &= \dd(a,c) - \dd(a,b) - \dd(b,c) + \ff^{\bz,I}_{a,b,c} \\
&= \bbone_{\sigma(c) \in \ft{\sigma(a)}{\sigma(b)}} = 1
\end{align*}
by Lemma \ref{L:delta-basic}(2).

On the other hand, if $\sigma(c) \notin \ft{\sigma(a)}{\sigma(b)}$,
then
\begin{align*}
\pi(a,c) - \dd(a,b) - \dd(b,c) + \ff^{\bz,I}_{a,b,c} &\leq \dd(a,c) - \dd(a,b) - \dd(b,c) + \ff^{\bz,I}_{a,b,c} \\
&= \bbone_{\sigma(c) \in \ft{\sigma(a)}{\sigma(b)}} = 0.
\end{align*}
Part (2) thus follows.
\end{enumerate}
\end{proof}

\begin{thm} \label{T:unique-initial}
Let $I \subseteq \ZZ/\ell\ZZ$ with $0 \in I$, and $m \in \ZZ^+$.
Suppose that $\bx= (x_1,\dotsc, x_e) \in \ZZ^e$ is $(I,m)$-initial.
Then $\bx$ is completely determined by $\pi_{\bx}^{I,m}$, $(\bz^{\bx})^{\sigma_{\by^{\bx}}}$, and $|\bx|$. \end{thm}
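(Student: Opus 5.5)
By Proposition \ref{P:unique-x}, $\bx$ is determined by $|\bx|$, $\dd_{\bx}^I$, $(\bz^{\bx})^{\sigma_{\by^{\bx}}}$ and $T_{\sigma_{\by^{\bx}}}$. Write $\sigma := \sigma_{\by^{\bx}}$, $\bz := (\bz^{\bx})^{\sigma}$, $\dd := \dd^I_{\bx}$ and $\pi := \pi^{I,m}_{\bx}$. The task is then to recover $T_\sigma$ and the full function $\dd$ from the truncated data $\pi$ and $\bz$, using that $\bx$ is $(I,m)$-initial. Note that $\ff^{\bz,I}_{a,b,c}$ is computable from $\bz$ alone, and this is what makes Proposition \ref{P:initial}(2) usable.

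I would recover $\dd(a,b)$ and the membership statements $\sigma(c)\in\ft{\sigma(a)}{\sigma(b)}$ together, by induction on $b-a$, processing the pairs $a<b$ by increasing gap. When $b=a+1$, the hypothesis of Proposition \ref{P:initial}(1) holds vacuously, so $\dd(a,a+1)\le m$ and hence $\dd(a,a+1)=\pi(a,a+1)$.

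For the inductive step, fix $a<b$. For each $c>b$, and scanning $c$ upward from $b+1$, decide whether $\sigma(c)\in\ft{\sigma(a)}{\sigma(b)}$ as follows. As long as no $j\in[b+1,c-1]$ has been found with $\sigma(j)\in\ft{\sigma(a)}{\sigma(b)}$, the hypothesis of Proposition \ref{P:initial}(2) requires knowledge about $j\in[a+1,b-1]$ too. So I would organise the induction primarily on $a$ descending, and for fixed $a$ on $b$ increasing. The point is to know first whether any $j\in(a,b)$ satisfies $\sigma(j)\in\ft{\sigma(a)}{\sigma(b)}$. If none does, then Proposition \ref{P:initial}(1) gives $\dd(a,b)=\pi(a,b)$. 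If some $j$ does, take the least such $j$ and apply Lemma \ref{L:delta-basic}(2) to the triple $a<j<b$ (the relation $\sigma(j)\in\ft{\sigma(a)}{\sigma(b)}$ is equivalent via Lemma \ref{L:ee} to a cyclic relation among $\sigma(a),\sigma(j),\sigma(b)$). This expresses $\dd(a,b)$ in terms of $\dd(a,j)$, $\dd(j,b)$, $\ff$ and an indicator, all of which are known by induction because the gaps are smaller. In either case, the criterion for whether such a $j$ exists should come from Proposition \ref{P:initial}(2) applied to the earlier pair $(a,j')$ with $j'<b$: scanning $c=j'+1,\dots$, it says exactly when $\sigma(c)\in\ft{\sigma(a)}{\sigma(j')}$, using only $\pi(a,c)$, $\dd(a,j')$, $\dd(j',c)$ and $\ff$, with the last $\dd$ of smaller gap.

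Once all $\dd$ values are known, $T_\sigma$ is obtained from the relations $\sigma(c)\in\ft{\sigma(a)}{\sigma(b)}$, completed using Lemma \ref{L:ee}, (\ref{E:ee}) and Corollary \ref{C:impt} to pass to arbitrary orderings of the triple. Proposition \ref{P:unique-x} then finishes the proof.

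The main obstacle is the bookkeeping of the induction: the hypothesis of Proposition \ref{P:initial}(2) must always be verifiable from previously determined information. I would try to show that, for fixed $a$, the sequence $\sigma(a+1),\sigma(a+2),\dots$ interacts with the nested intervals $\ft{\sigma(a)}{\sigma(b)}$ in a way that matches Corollary \ref{C:impt}. Concretely, let $b_1=a+1<b_2<\cdots$ be the successive indices at which $|\ft{\sigma(a)}{\sigma(b)}|$ reaches a new minimum. Each $b_{k+1}$ is the first $c>b_k$ with $\sigma(c)\in\ft{\sigma(a)}{\sigma(b_k)}$, and this is detectable by Proposition \ref{P:initial}(2), since its hypothesis holds by the minimality defining $b_k$. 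This chain also fixes the cyclic order of all $\sigma(c)$ relative to $\sigma(a)$, which is what is needed to recover $T_\sigma$.
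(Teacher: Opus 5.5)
Your recovery of $\dd$ is correct. It is organised differently from the paper's argument, and more simply.

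The paper inducts on $k$, determining $\dd$ on $[1,k]$ together with $A_k$, which records where $\sigma(k)$ sits among $\sigma(1),\dots,\sigma(k-1)$. It orders $[1,k-2]$ as $a_1,\dots,a_{k-2}$ by $|\ft{\sigma(a_i)}{\sigma(k-1)}|$, read off from $A_{k-1}$. For each $i$ it takes $a_M$ with $M=\max\{r\in[0,i-1]\mid a_r>a_i\}$, and proves by a five-case argument using Corollary \ref{C:impt} that the triple $(a_i,a_M,k)$ satisfies the hypothesis of Proposition \ref{P:initial}(2). It then computes $\dd(a_i,k)$ through the triple $(a_i,k-1,k)$.

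That five-case claim says exactly that $a_M$ minimises $|\ft{\sigma(a_i)}{\sigma(j)}|$ over $j\in[a_i+1,k-1]$. In other words, $a_M$ is your current record for the pair $(a_i,k)$, so the engine is the same. Fixing $a$ and scanning $b$ upward makes the record trivial to maintain and makes the hypothesis of Proposition \ref{P:initial}(2) for $(a,b^*,b)$ automatic. You therefore need neither the ordering $a_1,\dots,a_{k-2}$, nor $M$, nor Corollary \ref{C:impt}. The paper's route, in return, assembles $T_\sigma$ incrementally from the $A_k$. In yours, $T_\sigma$ must be extracted at the end, and that is where your write-up goes wrong.

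Your claim that the record chain for a fixed $a$ ``fixes the cyclic order of all $\sigma(c)$ relative to $\sigma(a)$'' is false, because the chain only records prefix minima. For example, if $\sigma(a+1)=\rr_e(\sigma(a))$ the chain is $\{a+1\}$ and says nothing about how $\sigma(a+2),\dots,\sigma(e)$ are arranged. Likewise, the only memberships your scan determines are those of the form $\sigma(c)\in\ft{\sigma(a)}{\sigma(b^*)}$ with $b^*$ a record.

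The repair is immediate from a lemma you already use. Once $\dd$ is known, for $a<b<c$ Lemma \ref{L:delta-basic}(2) gives
\[
\bbone_{\sigma(c)\in\ft{\sigma(a)}{\sigma(b)}}=\dd(a,c)-\dd(a,b)-\dd(b,c)+\ff^{\bz,I}_{a,b,c},
\]
and \eqref{E:ee} handles every other ordering of three distinct indices.

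A smaller point: in the non-record case you cannot in general identify the \emph{least} $j\in[a+1,b-1]$ with $\sigma(j)\in\ft{\sigma(a)}{\sigma(b)}$ from what you have computed. You do not need it, since the current record $b^*$ is such a $j$. Then $\dd(a,b)=\dd(a,b^*)+\dd(b^*,b)-\ff^{\bz,I}_{a,b^*,b}$, where $\dd(a,b^*)$ is known from earlier in the scan and $\dd(b^*,b)$ is known because $b^*>a$. With these two fixes, Proposition \ref{P:unique-x} finishes the proof as you say.
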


\begin{proof}
We adopt the following shorthand in this proof: $\sigma := \sigma_{\by^{\bx}}$, $\dd := \dd_{\bx}^I$,
$\pi := \pi_{\bx}^{I,m}$, $\bz := (\bz^{\bx})^{\sigma_{\by^{\bx}}}$.

By Proposition \ref{P:unique-x}, $\bx$ is completely determined by $|\bx|$, $\dd$, $\bz$ and $T_{\sigma}$.
As such, it suffices to show that $T_{\sigma}$ and $\dd$ are completely determined by $\pi$ and $\bz$ when $\bx$ is $(I,m)$-initial, and we do this by induction with the help of Proposition \ref{P:initial}.

For $k \in [2,\,e]$, define $\dd_k : \{ (a,b) \in [1,\,k]^2 \mid a < b \} \to \mathbb{Z}$ by $\dd_k(a,b) = \dd(a,b)$, and let $$A_k := \{ (a,b) \in [1,\,k-1]^2 \mid \sigma(k) \in \ft{\sigma(a)}{\sigma(b)} \}.$$
We prove by induction on $k$ that $\dd_k$ and $A_k$ are completely determined by $\pi$ and $\bz$.
For $k = 2$, we have $A_2 = \emptyset$, and $\dd_2(1,2) = \dd(1,2) \leq m$ by Proposition \ref{P:initial}(1), so that $\dd_2(1,2) = \pi(1,2)$.

For $k > 2$,
we have $\dd_k(a,b) = \dd_{k-1}(a,b)$ for all $a,b \in [1,\, k-1]$ with $a < b$.
Since, for $a,b \in [1,\, k-2]$,
$$
\sigma(k-1) \in \ft{\sigma(a)}{\sigma(b)}\ \Leftrightarrow\ \sigma(a) \in \ft{\sigma(b)}{\sigma(k-1)} \ \Leftrightarrow\ |\ft{\sigma(a)}{\sigma(k-1)}| <  |\ft{\sigma(b)}{\sigma(k-1)}| 
$$
by \eqref{E:ee} and Lemma \ref{L:ee}(1,5),
we can determine, from the knowledge of $A_{k-1}$, the integers
$a_1, \dotsc, a_{k-2} \in [1,\,k-2]$ satisfying
$$
|\ft{\sigma(a_1)}{\sigma(k-1)}| < |\ft{\sigma(a_2)}{\sigma(k-1)}| < \dotsb < |\ft{\sigma(a_{k-2})}{\sigma(k-1)}|.
$$
For convenience, let $a_{0} = a_{k-1} = k-1$. 

We now prove by induction on $i \in [0,\, k-2]$ that we can determine if $\sigma(k) \in \ft{\sigma(a_i)}{\sigma(k-1)}$ and the value of $\dd_k(a_i,k)$ from $\pi$ and $\bz$.

For $i = 0$, $\ft{\sigma(a_0)}{\sigma(k-1)} = \ft{\sigma(k-1)}{\sigma(k-1)}$ is undefined, so that $\sigma(k) \in \ft{\sigma(a_0)}{\sigma(k-1)}$ is false.
Also, $\dd_k(a_0,k) = \dd(k-1,k) \leq m$ by Proposition \ref{P:initial}(1), so that $\dd_k(a_0,k) = \pi(k-1,k)$.

For $i > 0$, we can determine if $\sigma(k) \in \ft{\sigma(a_i)}{\sigma(k-1)}$ as follows:
\begin{description}
\item[Case 1. $\sigma(k) \in \ft{\sigma(a_{i-1})}{\sigma(k-1)}$]
Note that $i \geq 2$ in this case. 
Since
$|\ft{\sigma(a_{i-1})}{\sigma(k-1)}| < |\ft{\sigma(a_i)}{\sigma(k-1)}|$, so that
$$\ft{\sigma(a_{i-1})}{\sigma(k-1)} \subsetneq \ft{\sigma(a_i)}{\sigma(k-1)}$$
by Lemma \ref{L:ee}(5,6), we have
$\sigma(k) \in \ft{\sigma(a_i)}{\sigma(k-1)}$.

\item[Case 2. $i = 1$ or $\sigma(k) \notin \ft{\sigma(a_{i-1})}{\sigma(k-1)}$]
Let $M:= \max \{ r \in [0,\,i-1] \mid a_{r} > a_i \}$; this is well-defined since $a_0 = k-1 > k-2 \geq a_i$ so that $0 \in \{ r \in [0,\,i-1] \mid a_{r} > a_i \}$.
Let $j \in [a_i+1,\, k-1]$.
We claim that $\sigma(j) \notin \ft{\sigma(a_i)}{\sigma(a_M)}$.
To prove this, we have $j = a_r$ for some $r \in [0,k-2] \setminus \{i\}$.
If $r = M$, then clearly $\sigma(j) = \sigma(a_M) \notin \ft{\sigma(a_i)}{\sigma(a_M)}$, so we need only consider the following five remaining cases.
\begin{description}
\item[Case 2a. $M < r < i$]  We have $j = a_r < a_i$ by definition of $M$, contradicting $j \in [a_i+1,\, k-1]$.

\item[Case 2b. $0 = M < i < r$] We have
\begin{align*}
|\ft{\sigma(a_i)}{\sigma(a_M)}| = |\ft{\sigma(a_i)}{\sigma(k-1)}| < |\ft{\sigma(a_r)}{\sigma(k-1)}| = |\ft{\sigma(a_r)}{\sigma(a_M)}|
\end{align*}
so that $\sigma(a_i) \in \ft{\sigma(a_r)}{\sigma(a_M)}$ and hence $\sigma(j) = \sigma(a_r) \notin \ft{\sigma(a_i)}{\sigma(a_M)}$ by \eqref{E:ee}.

\item[Case 2c. $0 < M < i < r$] We have $\sigma(a_i) \in \ft{\sigma(a_r)}{\sigma(a_M)}$ by Corollary \ref{C:impt}, and so $\sigma(j) = \sigma(a_r) \notin \ft{\sigma(a_i)}{\sigma(a_M)}$ by \eqref{E:ee}.

\item[Case 2d. $0 < r < M < i$] We have $\sigma(a_M) \in \ft{\sigma(a_i)}{\sigma(a_r)}$ by Corollary \ref{C:impt}, and so $\sigma(j) = \sigma(a_r) \notin \ft{\sigma(a_i)}{\sigma(a_M)}$ by \eqref{E:ee}.

\item[Case 2e. $0 = r < M < i$] We have
\begin{align*}
|\ft{\sigma(a_M)}{\sigma(a_r)}| = |\ft{\sigma(a_M)}{\sigma(k-1)}| < |\ft{\sigma(a_i)}{\sigma(k-1)}| = |\ft{\sigma(a_i)}{\sigma(a_r)}|
\end{align*}
so that $\sigma(a_M) \in \ft{\sigma(a_i)}{\sigma(a_r)}$ and hence $\sigma(j) = \sigma(a_r) \notin \ft{\sigma(a_i)}{\sigma(a_M)}$ by \eqref{E:ee}.
\end{description}
This completes the proof of the claim, and so
\begin{align}
\sigma(k) \in \ft{\sigma(a_{i})}{\sigma(a_{M})}
&\Leftrightarrow \pi(a_i,k) - \dd_k(a_M,k) - \dd_{k-1}(a_i,a_M) + \ff^{\bz,I}_{a_i,a_M,k} > 0 \label{E:cond}
\end{align}
by Proposition \ref{P:initial}(2), where the latter can be determined from the knowledge of $\pi$, $\bz$ and induction hypothesis.

It remains to show that
$$
\sigma(k) \in \ft{\sigma(a_{i})}{\sigma(k-1)} \Leftrightarrow \sigma(k) \in \ft{\sigma(a_{i})}{\sigma(a_{M})},
$$
which is clear if $M= 0$ since then $a_M = a_0 = k-1$.
If $M >0$, then $i > i-1 \geq M > 0$, and so
$$
|\ft{\sigma(a_M)}{\sigma(k-1)}| \leq |\ft{\sigma(a_{i-1})}{\sigma(k-1)}| < |\ft{\sigma(a_i)}{\sigma(k-1)}|.
$$
Consequently,
\begin{align}
\ft{\sigma(a_M)}{\sigma(k-1)} &\subseteq \ft{\sigma(a_{i-1})}{\sigma(k-1)}; \label{E:1} \\
\ft{\sigma(a_i)}{\sigma(k-1)} &= \ft{\sigma(a_i)}{\sigma(a_{M})} \cup \{ a_{M} \} \cup \ft{\sigma(a_{M})}{\sigma(k-1)}. \label{E:2}
\end{align}
Since $\sigma(k) \notin \ft{\sigma(a_{i-1})}{\sigma(k-1)}$ by assumption, we have $\sigma(k) \notin \ft{\sigma(a_{M})}{\sigma(k-1)}$ by \eqref{E:1}.
Thus,
\begin{align*}
&\sigma(k) \in \ft{\sigma(a_i)}{\sigma(k-1)}
\Leftrightarrow \\
&\qquad \sigma(k) \in \ft{\sigma(a_i)}{\sigma(k-1)} \setminus (\ft{\sigma(a_{M})}{\sigma(k-1)}
\cup \{\sigma(a_M)\} ) = \ft{\sigma(a_i)}{\sigma(a_M)}
\end{align*}
by \eqref{E:2},
as desired.
The right-hand side of \eqref{E:cond} is therefore a necessary and sufficient condition to check for $\sigma(k) \in \ft{\sigma(a_{i})}{\sigma(k-1)}$.
\end{description}
With the knowledge of whether $\sigma(k) \in \ft{\sigma(a_i)}{\sigma(k-1)}$,
\begin{align*}
\dd_k(a_i,k) = \dd(a_i,k)
&= \dd(a_i,k-1) + \dd(k-1,k) + \bbone_{\sigma(k) \in \ft{\sigma(a_i)}{\sigma(k-1)}} - \ff^{\bz,I}_{a_i,k-1,k} \\
&= \dd_{k-1}(a_i,k-1) + \pi(k-1,k) + \bbone_{\sigma(k) \in \ft{\sigma(a_i)}{\sigma(k-1)}} - \ff^{\bz,I}_{a_i,k-1,k}
\end{align*}
can then be determined from the knowledge of $\pi$, $\bz$ and induction hypothesis.

Thus $\dd_k(a_i,k)$ can be determined from $\pi$ and $\bz$ for all $i \in [0,k-2]$.
Since $a_0,a_1,\dotsc, a_{k-2}$ is a permutation of $1,2,\dotsc, k-1$, this completes the proof that $\dd_k$ can be determined completely by $\pi$ and $\bz$.

Now,
to obtain $A_k$, 
let 
$$
N = 
\begin{cases}
\max\{ i \in [1,\dotsc k-2] \mid \sigma(k) \notin \ft{\sigma(a_i)}{\sigma(k-1)} \}, 
&\text{if } \sigma(k) \notin \ft{\sigma(a_1)}{\sigma(k-1)}; \\
0, &\text{otherwise.} 
\end{cases}
$$
If $N < k-2$, then $\sigma(k) \in \ft{\sigma(a_{N+1})}{\sigma(k-1)}$, or equivalently $|\ft{\sigma(k)}{\sigma(k-1)}| < |\ft{\sigma(a_{N+1})}{\sigma(k-1)}|$.  On the other hand, if $N >0$, then  $\sigma(k) \notin \ft{\sigma(a_N)}{\sigma(k-1)}$, or equivalently $|\ft{\sigma(k)}{\sigma(k-1)}| > |\ft{\sigma(a_{N})}{\sigma(k-1)}|$.
Thus 
\begin{align*}
|\ft{\sigma(a_{k-2})}{\sigma(k-1)}| > \dotsb &> |\ft{\sigma(a_{N+1})}{\sigma(k-1)}| \\
&>  |\ft{\sigma(k)}{\sigma(k-1)}| > |\ft{\sigma(a_{N})}{\sigma(k-1)}| > \dotsb > |\ft{\sigma(a_1)}{\sigma(k-1)}|,
\end{align*}
so that for $i, j \in [1,\, k-1]$, we have
$\sigma(k) \in \ft{\sigma(a_i)}{\sigma(a_j)}$ if and only if $i > N \geq j$ or $N \geq j> i$ or $j > i > N$ by Corollary \ref{C:impt}.
Since $a_1,\dotsc, a_{k-1}$ is a permutation of $1,2,\dotsc, k-1$, we see that $A_k$ is completely determined.

Consequently, by induction, $\dd_k$ and $A_k$ are completely determined by $\pi$ and $\bz$ for all $k \in [2,\,e]$; in particular $\dd = \dd_e$ is completely determined by $\pi$ and $\bz$.
We now claim that
$$T_{\sigma} := \{ (a,b,c) \in [1,\,e]^3 \mid \sigma(c) \in \ft{\sigma(a)}{\sigma(b)} \}$$ can be determined by $\{ A_k \mid k \in [2,\,e]\}$.
Clearly $\sigma(c) \in \ft{\sigma(a)}{\sigma(b)}$ only if $a,b,c$ are distinct,
and when $a,b,c$ are distinct, whether $\sigma(c) \in \ft{\sigma(a)}{\sigma(b)}$ can be determined by whether $\sigma(c') \in \ft{\sigma(a')}{\sigma(b')}$, where $c' =\max\{a,b,c\}$ and $\{a', b', c'\} = \{a,b,c\}$ by \eqref{E:ee}.
In other words, $\sigma(c) \in \ft{\sigma(a)}{\sigma(b)}$ can be determined by whether $(a',b') \in A_{c'}$.
Thus $\pi$ and $\bz$ determine completely $\dd$ and $T_{\sigma}$, so that together with $|\bx|$, they completely determine $\bx$ by Proposition \ref{P:unique-x}.
\end{proof}

We are now ready to prove Proposition \ref{P:unique}.

\begin{proof}[Proof of Proposition \ref{P:unique}]
Let $B'$ be a Scopes-initial block in $\AW_e \DDot{} B$ such that
$(\bz^{B'})^{\sigma_{B'}} = (\bz^B)^{\sigma_B}$ and $\pi_{B'} = \pi_B$.
Note that $\by^{B'} = \by^{\br^*_{B'}}$, $\bz^{B'} = \bz^{\br^*_{B'}}$, $\sigma_{B'} = \sigma_{\by^{B\smash{'}}} = \sigma_{\smash[t]{\by^{\br^*_{B\smash{'}}}}}$, and that $B'$ is Scopes-initial if and only if $\br^*_{B'}$ is $(\II_{B'}, m_{B'})$-initial  (see Remark \ref{R:initial}).
Since $\mv(B') = \mv(B)$ by Proposition \ref{P:moving-vector-Weyl-orbit}, we have $\II_{B'} = \II_B$ and $m_{B'} = m_B$.
Thus $B'$ is Scopes-initial if and only if $\br^*_{B'}$ is $(\II_B,m_B)$-initial.
By Theorem \ref{T:unique-initial}, $\br^*_{B'}$ is completely determined by $|\br^*_{B'}| = |\br^{w_0}|$, $\pi^{\II_{B}, m_B}_{\br^*_{B'}} = \pi^{\II_{B'}, m_{B'}}_{\br^*_{B'}} = \pi_{B'} = \pi_B$ and $(\bz^{\br^*_{B'}})^{\sigma_{\by^{(\smash{\br^*_{B'}})}}} = (\bz^{B'})^{\sigma_{B'}} = (\bz^B)^{\sigma_{B}}$.
Since $\br^*_{B'}$ completely determines $\core(B')$ by \eqref{E:bu} and $\wt(B') = |\mv(B')| = |\mv(B)| = \wt(B)$ by \eqref{E:wt-mv}
, there is at most one such block $B'$ by Theorem \ref{T:Naka}.  Hence, there is at most one Scopes-initial block in $\AW_e \DDot{} B$ with the same Scopes vector and the same pyramid numbers as $B$.
\end{proof}

\subsection{RoCK blocks} \label{S:RoCK}

In this subsection we relate the classification of Scopes equivalence classes obtained in Corollary \ref{C: } to Webster's notion of RoCK blocks for Ariki-Koike algebras~\cite{Webster}, and give a simple description of RoCK blocks in terms of their pyramid numbers.
We also provide a closed formula for the number of Scopes inequivalent RoCK blocks in a $\AW_e$-orbit.

Webster's RoCK blocks generalise the Rouquier blocks first defined for symmetric groups and Schur algebras in \cite{CT}.  
These blocks have since been developed for other algebras such as the group algebras of finite general linear groups, Iwahori-Hecke algebras of type $A$ and $q$-Schur algebras.  

In \cite{Lyle-RoCK}, Lyle defines a block $B$ of $\HH_n$ to be Rouquier if and only if there exists $s \in \ZZ/e\ZZ$ such that for all $\bl = (\lambda^{(1)},\dotsc, \lambda^{(\ell)}) \in \PP^{\ell}$ lying in $B$, we have $\fs(\C^{(i)}_{j+1}) \geq \fs(\C^{(i)}_j) + \wt_e(\lambda^{(i)}) -1$ for all $i \in [1,\, \ell]$ and $j \in [1,\, e-1]$, where $\quot_e(\beta_{r_i+s}(\lambda^{(i)})) = (\C^{(i)}_1,\dotsc, \C^{(i)}_e)$. 

By \cite[Proposition~4.6]{Webster}, a block $B$ of $\HH_n$ is a RoCK block if and only if it is Scopes equivalent to a Rouquier block. 
Webster also provided a description of RoCK blocks of Ariki-Koike algebras in \cite[Subsection 4.2]{Webster}.
As a consequence of our classification of Scopes equivalence classes (Corollary \ref{C: }), we obtain a particularly concise characterisation for the RoCK property in terms of pyramid numbers, as follows.

\begin{thm} \label{T:RoCK}
Let $B$ be a block of $\HH_n$. The following statements are equivalent:
\begin{enumerate}
\item 
$B$ is a RoCK block;
\item $\pi_B(a,b) = m_B$ for all $a,b \in [1,\,e]$ with $a< b$.
\item $\pi_B(j,j+1) = m_B$ for all $j \in [1,\,e-1]$.
\end{enumerate}
\end{thm}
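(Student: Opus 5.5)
The plan is to prove the implications $(2)\Rightarrow(3)$, $(3)\Rightarrow(2)$ and $(1)\Leftrightarrow(2)$ separately. The case $m_B=0$ is handled apart from the rest.

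\emph{The core case $m_B=0$.} By Lemma \ref{L:pyramid-core}, all pyramid numbers vanish, so (2) and (3) hold automatically. It then remains to show that every core block is RoCK. I would try to see this directly from Lyle's definition: a core block has all $\bl$ with $\wt_e(\lambda^{(i)})$ constrained by \cite[Theorem 3.17]{LQT}, and I would check whether this already forces the Rouquier inequality. If that fails, I would quote Webster's description in \cite[Subsection 4.2]{LQT} instead. This point needs care, and I would settle it first. From now on assume $m_B>0$.

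\emph{$(2)\Leftrightarrow(3)$.} The implication $(2)\Rightarrow(3)$ is trivial. For $(3)\Rightarrow(2)$, use Lemma \ref{L:delta-basic}(2) with $\bx=\br^*_B$ and $I=\II_B$. Since $\sigma_B(a)\prec\sigma_B(b)\prec\sigma_B(c)$ whenever $a<b<c$, it gives
\[
\dd_B(a,c)\ \ge\ \dd_B(a,b)+\dd_B(b,c)-1,
\]
because $\ff\le 1$ and the indicator terms are nonnegative. Iterating along $a<a+1<\dotsb<b$ gives
\[
\dd_B(a,b)\ \ge\ \sum_{j=a}^{b-1}\dd_B(j,j+1)-(b-a-1).
\]
Under (3) each $\dd_B(j,j+1)\ge m_B$, so $\dd_B(a,b)\ge (b-a)(m_B-1)+1\ge m_B$ since $m_B\ge1$. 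Hence $\pi_B(a,b)=m_B$.

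\emph{$(1)\Leftrightarrow(2)$.} By \cite[Proposition 4.6]{Webster}, $B$ is RoCK if and only if it is Scopes equivalent to a Rouquier block. By Corollary \ref{C:Scopes} and Theorem \ref{T:Scopes}, this happens if and only if some Rouquier block $C\in\AW_e\DDot{}B$ has the same Scopes vector and pyramid numbers as $B$. Two facts then suffice.
\begin{enumerate}
\item[(a)] Every Rouquier block $C$ with $m_C>0$ satisfies $\dd_C(a,b)\ge m_C$ for all $a<b$.
\item[(b)] For every $B$, the orbit $\AW_e\DDot{}B$ contains a Rouquier block $C$ with $(\bz^C)^{\sigma_C}=(\bz^B)^{\sigma_B}$.
\end{enumerate}
Given these, (1) implies $\pi_B=\pi_C\equiv m_B$. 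Conversely, if (2) holds then $B$ and the block $C$ from (b) have equal Scopes vectors and equal pyramid numbers (all equal to $m_B$), so they are Scopes equivalent.

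For (b), recall from Lemma \ref{L:cs_j}(1) that $\br^*_{w\DDot{}B}=w\CDot{\ell}\br^*_B$. Choose $w=\sigma\bs$ so that $w\CDot{\ell}\br^*_B$ is $\br^*_B$ permuted into $\sigma_B$-order and then translated by $\ell N(0,1,\dotsc,e-1)$, corrected to lie in $\AW_e$ by \eqref{E:affineWeylgroup}. Here $N\gg0$. This keeps the sorted $\bz$-vector, so by Corollary \ref{C:w-z} the Scopes vector is unchanged, and it makes the gaps $y^C_{j+1}-y^C_j$ enormous.

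The main obstacle is connecting Lyle's Rouquier condition, which is phrased via the $e$-quotients of the individual $\beta_{r_i+s}(\lambda^{(i)})$, with the data $\br^*_C$ obtained through Uglov's map $\UU$. I would unwind $\UU$ via \eqref{E:quot}, runner by runner, to show two things. First, huge gaps in $\by^C$, together with every component lying in $\AAbar$-position, force $\fs(\C^{(i)}_{j+1})-\fs(\C^{(i)}_j)$ to be at least roughly $(y^C_{j+1}-y^C_j)-1$. Combined with $\wt_e(\lambda^{(i)})\le\wt(C)$, this gives Rouquier for large $N$ and proves (b). Second, in the reverse direction, Lyle's inequality for some $s$ (which I would take to correspond to $\sigma_C$ being a rotation) yields $y^C_{j+1}-y^C_j\ge m_C$ along $\sigma_C$-consecutive pairs. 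Feeding this into the chain inequality above then gives (a).
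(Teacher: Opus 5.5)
The genuine gap is your fact (a), which carries the direction $(1)\Rightarrow(2)$. You assert that Lyle's inequality for an \emph{arbitrary} Rouquier block $C$ forces $\by^C$-gaps of size about $m_C$, but the argument you point to cannot give this. For a fixed $\bl$ lying in $C$, summing the component inequalities only gives $x^C_{j'+1}-x^C_{j'}\ge\sum_i\wt_e(\lambda^{(i)})-\ell$, up to the $\delta_{js}$ shift. Moreover $\sum_i\wt_e(\lambda^{(i)})\le m_C$ for every such $\bl$: the $\ell$-weights of the components of $\bl^*$ add up to $\sum_i\wt_e(\lambda^{(i)})$, and each removed $\ell$-hook has one node of every $\ell$-residue. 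So this route gives $\by^C$-gaps of order only $m_C/\ell$, a factor of $\ell$ too small. The sharp bound $\dd_C(j,j+1)\ge m_C$ has to account for the $\Ht_{\II_C}$-indicator and the $-\bbone$ term at the wrap-around pair. Getting it would need a carefully chosen $\bl$ and exact bookkeeping, which you do not supply.

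The paper avoids this problem. By \cite[Proposition 3.18]{Lyle-RoCK}, one may replace $C$ by a Scopes-equivalent Rouquier block in which every component satisfies $\fs(\B^{(i)}_{j+1})\ge\fs(\B^{(i)}_j)+N$ with $N\gg0$. Translating through \eqref{E:quot} and summing over components then gives $y^C_{j+1+e-s}-y^C_{j+e-s}\ge N+\delta_{js}$. Hence $\sigma_C=\rr_e^{e-s}$ and $\dd_C(j,j+1)\ge N\ge m_C$, and your chain inequality finishes the argument. You need this input, or a real substitute for it.

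The rest is sound. Your reduction of $(1)\Leftrightarrow(2)$ to (a) and (b) via \cite[Proposition 4.6]{Webster} and Theorem~\ref{T:Scopes} is correct. Your chain estimate for $(3)\Rightarrow(2)$ is right and is the one the paper uses. Your (b) is essentially the paper's $R=\bt\sigma_B^{-1}\DDot{}B$.

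In (b), though, the deviation of $\fs(\C^{(i)}_j)$ from its balanced value is governed by the $\ell$-core of $\lambda^{*(j)}$, which can have size up to $\wt(C)$, not by $1$. The conclusion still holds once $N\gg\wt(C)$. The paper's route here is lighter and avoids unwinding $\UU$:
\begin{enumerate}
\item it takes only $t_{j+1}>t_j$, and uses (3) to get $\dd_R(j,j+1)>m_B$;
\item Theorem~\ref{T:equiv1} then shows that no $\bl$ in $R$ has an addable node of residue $j\in[1,e-1]$;
\item Theorem~\ref{T:equiv1} at level $1$, applied to each component's block, gives Lyle's inequality with $s=0$ and with margin.
\end{enumerate}

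In the core case, your first idea cannot work, because core blocks need not themselves be Rouquier. Already for $\ell=1$, $e=4$, an $e$-core whose $e$-quotient has charges $(0,5,0,5)$ violates Lyle's inequality for every $s$. You must cite \cite[Theorem 3.17 and Corollary 4.27]{LQT}, as the paper does. The ``Subsection 4.2'' you mention is in \cite{Webster}, not \cite{LQT}.
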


\begin{proof} 
Suppose first that $m_B = 0$.
By Lemma \ref{L:delta-basic}(1), we have $\dd_B(a,b) \geq 0$, so that $\pi_B(a,b) = \min(\dd_B(a,b), m_B) = \min(\dd_B(a,b), 0) = 0 = m_B$ for all $a,b \in [1,\,e]$ with $a< b$.  
On the other hand, $B$ is always a RoCK block by \cite[Theorem 3.17 and Corollary 4.27]{LQT}.
Thus, the theorem holds for $m_B =0$.

Since (2) implies (3) is trivial, it remains to show that $(1) \Rightarrow (2)$ and $(3) \Rightarrow (1)$ when $m_B > 0$.

\begin{description}
\item[$(1) \Rightarrow (2)$]

Suppose that $B$ is a RoCK block with $m_B > 0$.
Then $B$ is Scopes equivalent to a Rouquier block $C$ by \cite[Proposition~4.6]{Webster}, and we may further assume 
by \cite[Proposition 3.18]{Lyle-RoCK} that $C$ has the property that for all $\bl = (\lambda^{(1)}, \dotsc, \lambda^{(\ell)}) \in \PP^{\ell}$ lying in $C$, we have $$\fs(\B^{(i)}_{j+1}) \geq \fs(\B^{(i)}_j) +N$$ for all $i \in [1,\, \ell]$ and $j \in [1,\, e-1]$, where $\quot_e(\beta_{r_i+s}(\lambda^{(i)})) = (\B^{(i)}_1,\dotsc,\B^{(i)}_e)$, $s \in \ZZ/e\ZZ$ and $N \gg 0$ .
Let
\begin{align*}
\beta_{\br^{w_0}}(\bl^{w_0})  &= (\B^{(1)},\dotsc, \B^{(\ell)}), \\
\quot_e(\UU(\beta_{\br^{w_0}}(\bl^{w_0})) &= (\C_1,\dotsc, \C_e), \\
\quot_e(\B^{(a)}) &= (\C^{(a)}_1, \dotsc, \C^{(a)}_e)
\end{align*}
for all $a \in [1,\,\ell]$.
Then $\fs(\C_j) = \sum_{a=1}^{\ell} \fs(\C^{(a)}_j)$ by \cite[Lemma 2.6(2)]{LT}.

Note that if $w_0 = \tau\bt $ where $\bt = (t_1,\dotsc, t_{\ell}) \in \ZZ^{\ell}$ and $\tau \in \sym{\ell}$, then
$$\B^{(a)} = \beta_{r_{\tau(a)} + et_{a}}(\lambda^{(\tau(a))}) = (\beta_{r_{\tau(a)}+s}(\lambda^{(\tau(a)})))^{+et_a-s},$$
so that
\begin{align*}
(\B^{(\tau(a))}_1, \dotsc, \B^{(\tau(a))}_e) &= \quot_e(\beta_{r_{\tau(a)+s}}(\lambda^{(\tau(a))}))
= \quot_e((\B^{(a)})^{+(-et_a+s)}) \\
&= ((\C^{(a)}_{e-s+1})^{+(-t_a+1)}, \dotsc, (\C^{(a)}_{e})^{+(-t_a+1)},
(\C^{(a)}_{1})^{+(-t_a)}, (\C^{(a)}_{e-s})^{+(-t_a)})
\end{align*}
by \eqref{E:quot}.
Let $j \in [1,\, e-1]$.  Then 
\begin{align*}
N \leq \fs(\B^{(\tau(a))}_{j+1}) - \fs(\B^{(\tau(a))}_{j})
&= \fs((\C^{(a)}_{j+1+e-s})^{+(-t_a + \bbone_{j+1 \leq s})}) -  \fs((\C^{(a)}_{j+e-s})^{+(-t_a + \bbone_{j \leq s})}) \\
&= \fs(\C^{(a)}_{j+1+e-s}) - \fs(\C^{(a)}_{j+e-s}) - \delta_{js}
\end{align*}
where, here and hereafter, the subscripts $j+1+e-s$ and $j+e-s$ are to be read modulo $e$.  
Thus
\begin{align*}
x^C_{j+1+e-s} - x^C_{j+e-s} &= \fs(\C_{j+1+e-s}) - \fs(\C_{j+e-s}) \\
&= \sum_{a=1}^{\ell} \fs(\C^{(a)}_{j+1+e-s}) - \sum_{a=1}^{\ell} \fs(\C^{(a)}_{j+e-s}) \\
&\geq (N+\delta_{js})\ell,
\end{align*}
so that $y^C_{j+1+e-s} - y^C_{j+e-s} \geq N+ \delta_{js}$.
Since this holds for all $j \in [1,\, e-1]$, we thus have 
$$
y^C_{e-s+1} < \dotsb < y^C_e < y^C_1 < \dotsb < y^C_{e-s},
$$
so that 
$\sigma_C = \rr_e^{e-s}$, and
$$
\dd_C(j,j+1) = y^C_{j+1+e-s} - y^C_{j+e-s} - \delta_{js} + \bbone_{z^C_{j+e-s} \leq \Ht_{\II_C}(z^C_{j+1+e-s})} \geq N.
$$
Since $N \gg 0$, we may assume that $N \geq m_C$.
Consequently, $\pi_C(a,b) = m_C$ for all $a,b \in [1,\,e]$ with $a< b$ by Lemma \ref{L:delta-basic}(2) (and induction on $b-a$).
Since $B$ and $C$ are Scopes equivalent, we thus have $\pi_B(a,b) = \pi_C(a,b) = m_C = m_B$ for all $a,b \in [1,\,e]$ with $a< b$ by Corollary \ref{C: }.


\item[$(3) \Rightarrow (1)$]
Suppose that $\pi_B(j,j+1) = m_B>0$ for all $j \in [1,\,e-1]$.
We shall construct a Rouquier block $R$ which is Scopes equivalent to $B$; this will show that $B$ is a RoCK block by \cite[Proposition 4.6]{Webster}.

Let $\bt = (t_1,\dotsc, t_e) \in \ZZ^e$ such that $|\bt| = 0$ and $t_{j+1} > t_j$ for all $j \in [1,\, e-1]$.
Then $\bt \in \AW_e$ by \eqref{E:affineWeylgroup}.
Let $R := \bt \sigma_B^{-1} \DDot{} B \in \AW_e \DDot{} B$.
Then $\mv(R) = \mv(B)$ 
by Proposition \ref{P:moving-vector-Weyl-orbit}, so that $m_R = m_B$ and $\II_{R} = \II_B$.
Furthermore, by Lemma \ref{L:cs_j}(1), we have
\begin{align*}
\by^{R} &= \bt + (\by^B)^{\sigma_B}; \\
\bz^{R} &= (\bz^B)^{\sigma_B}.
\end{align*}
Thus, for $j \in [1,\,e-1]$, we have
$y^{R}_{j+1} - y^{R}_j = (y^B_{\sigma_B(j+1)} - y^B_{\sigma_B(j)}) + (t_{j+1} - t_j) > 0$
since $y^B_{\sigma_B(j+1)} \geq y^B_{\sigma_B(j)}$ by definition of $\sigma_B = \sigma_{\by^B}$.
In particular, 
$y^{R}_1 < y^{R}_2 < \dotsb < y^{R}_e$, so that $\sigma_{R} = 1_{\sym{e}}$.
Hence, $(\bz^{R})^{\sigma_{R}} = \bz^{R} = (\bz^B)^{\sigma_B}$, and for $j \in [1,\,e-1]$,
\begin{align*}
\dd_{R}(j,j+1) &= y^{R}_{j+1} - y^{R}_j + \bbone_{z^{R}_j \leq \Ht_{\II_R} (z^{R}_{j+1})} \\
&= (y^B_{\sigma_B(j+1)} + t_{j+1}) - (y^B_{\sigma_B(j)}  + t_j) + \bbone_{z^B_{\sigma_B(j)} \leq \Ht_{\II_B} (z^B_{\sigma_B(j+1)})} \\
&= \dd_B(j,j+1) + (t_{j+1} -t_j) + \bbone_{\sigma_B(j) > \sigma_B(j+1)} \\
&> \dd_B(j,j+1) \geq \pi_B(j,j+1) = m_B.
\end{align*}
%
Consequently, by Theorem \ref{T:equiv1}, 
any $\bl = (\lambda^{(1)},\dotsc, \lambda^{(\ell)}) \in \PP^{\ell}$ lying in $R$ has no addable node with $(e,\br)$-residue $j$, or equivalently, every $\lambda^{(i)}$ has no addable node with $(e,r_i)$-residue $j$.
Clearly, for any $\mu \in \PP$ lying in the same block $B_i$ of $\HH_{\FF, q, r_i}(|\lambda^{(i)}|)$ as $\lambda^{(i)}$, the $\ell$-partition $(\lambda^{(1)}, \dotsc, \lambda^{(i-1)}, \mu , \lambda^{(i+1)}, \dotsc, \lambda^{(\ell)})$ also lies in $R$, and so has no addable node with $(e,\br)$-residue $j$; consequently, $\mu$ also has no addable node with $(e,r_i)$-residue $j$.
We can therefore apply Theorem \ref{T:equiv1} to $B_i$ to get 
$$\fs(\C^{(i)}_{j+1}) - \fs(\C^{(i)}_j) \geq \wt(B_i) = \wt_e(\lambda^{(i)}),$$ where $\quot_e(\beta_{r_i}(\lambda^{(i)})) = (\C^{(i)}_1,\dotsc, \C^{(i)}_e)$.
Since this is true for all $i \in [1,\,\ell]$ and $j \in [1,\,e-1]$, we see that $R$ is a Rouquier block.  

To show that $B$ is Scopes equivalent to $R$, we use the characterisation of Scopes equivalence classes in  Theorem \ref{T:Scopes}.  Since $R$ and $B$ lie in the same $\AW_e$-orbit and $(\bz^R)^{\sigma_R} = (\bz^B)^{\sigma_B}$, it remains to show that $\pi_R = \pi_B$.
But since $m_B \geq 1$, we see from Lemma \ref{L:delta-basic}(2) (and by induction) that
\begin{align*}
m_B \leq \dd_B(a,a+1) \leq \dd_B(a,a+2) \leq \dotsb \leq \dd_B(a,b)
\end{align*}
for all $a, b \in [1,\,e]$ with $a < b$, and a similar statement also holds for $R$.
Thus, $$\pi_R(a,b) = m_R = m_B = \pi_B(a,b)$$ for all $a, b \in [1,\,e]$ with $a < b$, and the proof is complete.
\end{description}
\end{proof}

We conclude this subsection with the number of Scopes inequivalent RoCK blocks in a $\AW_e$-orbit.  
Since this has been addressed in \cite[Proposition 4.25]{LQT} for core blocks, we shall only do so for the non-core ones, i.e.\ those with $m_B > 0$.

We begin with demonstrating the existence of a RoCK block in any $\AW_e$-orbit of a non-core block.  The following result actually proves much more.

\begin{prop} \label{P:RoCK-exist}
Let $B$ be a block of $\HH_n$ with $m_B > 0$.
For any $\tau \in \sym{e}$, there exists a RoCK block $R \in \AW_e \DDot{} B$ with $(\bz^R)^{\sigma_R} = (\bz^B)^{\tau}$.
\end{prop}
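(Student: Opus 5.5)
We mimic the construction in the proof of Theorem \ref{T:RoCK}, $(3)\Rightarrow(1)$, but replace $\sigma_B$ by a suitable permutation so that the resulting block's $\by$ is strictly increasing, while its $\bz$ is the prescribed rearrangement of $\bz^B$. Choose $\bt = (t_1,\dotsc,t_e) \in \ZZ^e$ with $|\bt| = 0$ and $t_{j+1} - t_j$ very large for all $j \in [1,\,e-1]$; say larger than $m_B + 1 + \max_{a,b}|y^B_a - y^B_b|$. Then $\bt \in \AW_e$ by \eqref{E:affineWeylgroup}. Since $\tau$ need not lie in $\AW_e$, we use $w := \bt\tau^{-1} \in \AW_e$ (as $\AW_e \supseteq \sym{e}$ together with all $\bt$ with $|\bt|=0$) and set $R := w \DDot{} B$. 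Then $\mv(R) = \mv(B)$ by Proposition \ref{P:moving-vector-Weyl-orbit}, so $m_R = m_B$ and $\II_R = \II_B$.

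The first step is to compute $\by^R$ and $\bz^R$. Iterating Lemma \ref{L:cs_j}(1), which tells us that $\br^*_{w\DDot{}B} = w \CDot{\ell} \br^*_B$ for generators and hence for all of $\AW_e$, together with \eqref{E:CDot}, we get
\[
\br^*_R = (\bt\tau^{-1}) \CDot{\ell} \br^*_B,
\]
which yields
\[
\by^R = \bt + (\by^B)^{\tau} \qquad\text{and}\qquad \bz^R = (\bz^B)^{\tau},
\]
up to the exact convention for the order of composition, which I would double-check against Corollary \ref{C:w-z}. This check is the one place where care is needed: if the convention produces $\tau^{-1}$ instead of $\tau$, simply replace $w$ by $\bt\tau$.

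By the choice of $\bt$, we have $y^R_1 < y^R_2 < \dotsb < y^R_e$, so $\sigma_R = 1_{\sym{e}}$ and hence $(\bz^R)^{\sigma_R} = (\bz^B)^{\tau}$. Moreover, for every $j \in [1,\,e-1]$,
\[
\dd_R(j,j+1) \geq y^R_{j+1} - y^R_j \geq (t_{j+1}-t_j) - \max_{a,b}|y^B_a-y^B_b| > m_B,
\]
so $\pi_R(j,j+1) = m_R$ for all $j \in [1,\,e-1]$. By Theorem \ref{T:RoCK}, $(3)\Rightarrow(1)$, the block $R$ is therefore RoCK.

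The main obstacle is only bookkeeping the group action conventions to confirm that $\bz^R$ is exactly $(\bz^B)^{\tau}$; the RoCK property itself follows immediately once $\bt$ is chosen with sufficiently large gaps.
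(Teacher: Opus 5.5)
Your proposal is correct and is essentially the paper's proof. The paper takes $R=\bigl((\by-(\by^B)^{\tau})\tau^{-1}\bigr)\DDot{}B$ for a target $\by$ with $|\by|=|\by^B|$ and gaps at least $m_B+1$, which is your $w=\bt\tau^{-1}$ with $\bt=\by-(\by^B)^{\tau}$; it gets $\by^R=\by$ and $\bz^R=(\bz^B)^{\tau}$ from Lemma \ref{L:cs_j}(1), deduces $\sigma_R=1_{\sym{e}}$, and concludes with Theorem \ref{T:RoCK}.

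You need not worry about conventions. Since $\bt\tau^{-1}=\tau^{-1}\bt^{\tau^{-1}}$, formula \eqref{E:CDot} gives exactly $\bigl(\bt^{\tau^{-1}}+\by^B\bigr)^{\tau}=\bt+(\by^B)^{\tau}$. Also, $\tau$ itself does lie in $\AW_e$, so your remark that it need not is mistaken, though harmless.
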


\begin{proof}
Let $\by = (y_1,\dotsc, y_{e}) \in \ZZ^e$ be such that $|\by| = |\by^B|$ and for each $j \in [1,\, e-1]$, $y_{j+1} - y_j \geq m_B+1$.
Then $|\by - (\by^B)^{\tau}| = |\by| - |(\by^B)^{\tau}| = |\by| - |\by^B| = 0$, so that $\by - (\by^B)^{\tau} \in \AW_e$ by \eqref{E:affineWeylgroup}.
Let $R = ((\by - (\by^B)^{\tau}) \tau^{-1}) \DDot{} B \in \AW_e \DDot{} B$.  Then $m_R = m_B$ by Proposition \ref{P:moving-vector-Weyl-orbit}, and  
\begin{align*}
\by^R &= ((\by - (\by^B)^{\tau}) (\tau)^{-1}) \CDot{1} \by^B = 
(\by - (\by^B)^{\tau}) + (\by^B)^{\tau} = \by; \\
\bz^R &= ((\by - (\by^B)^{\tau}) (\tau)^{-1}) \CDot{0} \bz^B = (\bz^B)^{\tau}
\end{align*}
by Lemma \ref{L:cs_j}(1).
Thus, $y^R_{j+1} - y^R_j = y_{j+1} - y_j \geq m_B+1$ for all $j \in [1,\,e-1]$, giving both $\sigma_R = 1_{\sym{e}}$ and for all $j \in [1,\,e-1]$, $\dd_R(j,j+1) \geq m_B$, and hence $$\pi_R(j,j+1) = \min(\dd_R(j,j+1), m_R) = \min(\dd_R(j,j+1), m_B) = m_B = m_R.$$  
Thus $R$ is a RoCK block by Theorem \ref{T:RoCK}.
\end{proof}

\begin{cor}[cf. {\cite[Theorem A]{Webster}}]
Let $B$ be a block of $\HH_n$ with $m_B > 0$.  For each $i \in \ZZ/\ell\ZZ$, let $$k_i = |\{ j \in [1,\,e] \mid z^B_j = i \}|.$$  Then the number of Scopes inequivalent RoCK blocks in $\AW_e \DDot{} B$ is exactly
$$
\frac{e!}{k_0!\, k_1!\, \dotsm \, k_{\ell-1}!}.
$$
\end{cor}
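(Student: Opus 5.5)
The count will come from a bijection between Scopes classes of RoCK blocks in $\AW_e \DDot{} B$ and distinct rearrangements of $\bz^B$. Then the answer is the multinomial coefficient $e!/(k_0!\dotsm k_{\ell-1}!)$.

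\textbf{The map.} Send a RoCK block $R \in \AW_e \DDot{} B$ to its Scopes vector $(\bz^R)^{\sigma_R}$. By Corollary \ref{C:w-z}, $\bz^R$ is a rearrangement of $\bz^B$, and therefore so is $(\bz^R)^{\sigma_R}$. By Corollary \ref{C:Scopes}, Scopes equivalent non-core blocks have the same Scopes vector. So the map descends to Scopes equivalence classes of RoCK blocks, with values in the set of rearrangements of $\bz^B$. This set has exactly $e!/(k_0!\dotsm k_{\ell-1}!)$ elements, since $k_i$ counts the entries of $\bz^B$ equal to $i$.

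\textbf{Surjectivity.} Any rearrangement of $\bz^B$ has the form $(\bz^B)^{\tau}$ for some $\tau\in\sym{e}$. Proposition \ref{P:RoCK-exist} then gives a RoCK block $R \in \AW_e\DDot{} B$ with $(\bz^R)^{\sigma_R} = (\bz^B)^\tau$.

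\textbf{Injectivity.} Let $R, R'$ be RoCK blocks in $\AW_e \DDot{} B$ with the same Scopes vector. By Proposition \ref{P:moving-vector-Weyl-orbit}, $m_R = m_{R'} = m_B > 0$. By Theorem \ref{T:RoCK}, $\pi_R(a,b) = m_B = \pi_{R'}(a,b)$ for all $a<b$, so $R$ and $R'$ have the same pyramid numbers. Theorem \ref{T:Scopes} then shows $R$ and $R'$ are Scopes equivalent.

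The argument is a direct assembly of earlier results, so no step is a real obstacle. The one point to check is that the Scopes vector is a rearrangement of $\bz^B$ itself, not merely of $\bz^R$. This follows from Corollary \ref{C:w-z} because $R = w\DDot{} B$ for some $w \in \AW_e$.
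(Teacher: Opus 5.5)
Your proposal is correct and is essentially the paper's own argument. In both, the Scopes vectors of RoCK blocks in $\AW_e \DDot{} B$ are exactly the rearrangements of $\bz^B$, by Corollary~\ref{C:w-z} and Proposition~\ref{P:RoCK-exist}; and since all these blocks have pyramid numbers identically $m_B$ (Theorem~\ref{T:RoCK}), Theorem~\ref{T:Scopes} makes the Scopes vector a complete invariant of their Scopes classes.
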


In particular, the number of Scopes inequivalent RoCK blocks in $\AW_e \DDot{} B$ depends only on the multiplicities of the $\ell$-residues occurring in $\bz^B$.

\begin{proof}
If $R$ is a RoCK block in $\AW_e \DDot{} B$, then its Scopes vector is a rearrangement of $\bz^B$ by Corollary \ref{C:w-z}.
On the other hand, each rearrangement of $\bz^B$ is the Scopes vector of a RoCK block in $\AW_e \DDot{} B$ by Proposition \ref{P:RoCK-exist}.  

Since Scopes equivalent blocks in $\AW_e \DDot{} B$ are characterised by their Scopes vectors and pyramid numbers by Theorem \ref{T:Scopes}, and that all RoCK blocks in $\AW_e \DDot{} B$ have the same pyramid numbers by Theorem \ref{T:RoCK}, we see that the number of Scopes inequivalent RoCK blocks in $\AW_e \DDot{} B$ is equal to the number of distinct Scopes vectors of RoCK blocks in $\AW_e \DDot{} B$, which in turn is the number of distinct rearrangements of $\bz^B$.  The corollary now follows.
\end{proof}


\end{document}